\newcommand\da{\!\downarrow\!}
\newcommand\la{\leftarrow}
\newcommand\lra{\longrightarrow}
\newcommand\id{\mathrm{id}}
\newcommand\ten{\otimes}
\newcommand\Ru{\mathrm{R_u}}
\renewcommand\H{\mathrm{H}}
\newcommand\N{\mathbb{N}}
\newcommand\Z{\mathbb{Z}}
\newcommand\Q{\mathbb{Q}}
\newcommand\Ql{\mathbb{Q}_{\ell}}
\newcommand\Zl{\mathbb{Z}_{\ell}}
\newcommand\bFl{\mathbb{F}_{\ell}}
\newcommand\Cx{\mathbb{C}}
\newcommand\bA{\mathbb{A}}
\newcommand\bG{\mathbb{G}}
\newcommand\bH{\mathbb{H}}
\newcommand\bS{\mathbb{S}}
\newcommand\bT{\mathbb{T}}
\newcommand\cA{\mathcal{A}}
\newcommand\cB{\mathcal{B}}
\newcommand\cL{\mathcal{L}}
\newcommand\cM{\mathcal{M}}
\newcommand\cT{\mathcal{T}}
\newcommand\sF{\mathscr{F}}
\newcommand\sO{\mathscr{O}}
\newcommand\Y{\mathfrak{Y}}
\newcommand\n{\mathfrak{n}}
\newcommand\fu{\mathfrak{u}}
\newcommand\Ho{\mathrm{Ho}}
\newcommand\Hom{\mathrm{Hom}}
\newcommand\map{\mathrm{map}}
\newcommand\HHom{\underline{\mathrm{Hom}}}
\newcommand\Ext{\mathrm{Ext}}
\newcommand\EExt{\mathbb{E}\mathrm{xt}}
\newcommand\End{\mathrm{End}}
\newcommand\cocone{\mathrm{cocone}}
\newcommand\rec{\mathrm{rec}}
\newcommand\im{\mathrm{Im\,}}
\newcommand\Ob{\mathrm{Ob}}
\newcommand\ob{\mathrm{ob}}
\newcommand\CoLie{\mathrm{CoLie}}
\newcommand\Gp{\mathrm{Gp}}
\newcommand\mal{\mathrm{Mal}}
\newcommand\Spec{\mathrm{Spec}\,}
\newcommand\Dec{\mathrm{Dec}\,}
\newcommand\DEC{\mathrm{DEC}\,}
\newcommand\Set{\mathrm{Set}}
\newcommand\Com{\mathrm{Com}}
\newcommand\Lim{\varprojlim}
\newcommand\LLim{\varinjlim}
\DeclareMathOperator*{\holim}{holim}
\newcommand\ho{\mathrm{ho}\!}
\newcommand\into{\hookrightarrow}
\newcommand\onto{\twoheadrightarrow}
\newcommand\abuts{\implies}
\newcommand\xra{\xrightarrow}
\newcommand\pr{\mathrm{pr}}
\newcommand\HR{\mathrm{HR}}
\newcommand\bt{\bullet}
\newcommand\by{\times}
\newcommand\SL{\mathrm{SL}}
\newcommand\GL{\mathrm{GL}}
\newcommand\et{\acute{\mathrm{e}}\mathrm{t}}
\newcommand\Br{\mathrm{Br}}
\newcommand\nr{\mathrm{nr}}
\newcommand\Et{\acute{\mathrm{E}}\mathrm{t}}
\newcommand\Tot{\mathrm{Tot}\,}
\newcommand\diag{\mathrm{diag}\,}
\newcommand\pro{\mathrm{pro}}
\newcommand\pd{\partial}
\newcommand\ab{\mathrm{ab}}
\newcommand\cts{\mathrm{cts}}
\newcommand\gp{\mathrm{Gp}}
\newcommand\gpd{\mathrm{Gpd}}
\newcommand\red{\mathrm{red}}
\newcommand\Lie{\mathrm{Lie}}
\newcommand\cosk{\mathrm{cosk}}
\newcommand\op{\mathrm{opp}}
\newcommand\co{\colon\thinspace}
\newcommand\oR{\mathbf{R}}
\newcommand\uleft\underleftarrow
\newcommand\uline\underline
\newcommand\uright\underrightarrow
\newtheorem{theorem}{Theorem}[section]
\newtheorem{proposition}[theorem]{Proposition}
\newtheorem{corollary}[theorem]{Corollary}
\newtheorem{lemma}[theorem]{Lemma}
\newtheorem*{theorem*}{Theorem}
\newtheorem*{proposition*}{Proposition}
\newtheorem*{corollary*}{Corollary}
\newtheorem*{lemma*}{Lemma}
\newtheorem*{conjecture*}{Conjecture}
\theoremstyle{definition}
\newtheorem{definition}[theorem]{Definition}
\newtheorem*{definition*}{Definition}
\theoremstyle{remark}
\newtheorem{example}[theorem]{Example}
\newtheorem{examples}[theorem]{Examples}
\newtheorem{remark}[theorem]{Remark}
\newtheorem*{example*}{Example}
\newtheorem*{examples*}{Examples}
\newtheorem*{remark*}{Remark}
\newtheorem*{remarks*}{Remarks}
\newtheorem*{exercise*}{Exercise}
\newtheorem*{property*}{Property}
\newtheorem*{properties*}{Properties}
\begin{document}

\begin{abstract}
 We  reinterpret  Kim's non-abelian reciprocity maps for algebraic varieties as obstruction towers  of mapping spaces of \'etale homotopy types, removing technical hypotheses such as global basepoints and  cohomological constraints.
We then extend the theory by considering alternative natural series of extensions, one of which gives an obstruction tower whose first stage is the Brauer--Manin obstruction, allowing us to determine when Kim's maps recover the Brauer--Manin locus.
A tower based on  relative completions yields non-trivial reciprocity maps even for Shimura varieties; for the stacky modular curve, these 
 take values in Galois cohomology of modular forms, 
and 
give obstructions to an ad\'elic elliptic curve with global Tate module underlying a global elliptic curve. 
\end{abstract}

\title{Non-abelian reciprocity laws and higher Brauer--Manin obstructions}

\author{J. P. Pridham}
\thanks{The  author  was supported during this research by  the Engineering and Physical Sciences Research Council [grant number EP/I004130/2].}


\maketitle

\section*{Introduction}

In \cite{narec1}, Minhyong Kim  introduced a sequence of non-abelian reciprocity maps on the  ad\'elic points $X(\bA_F)$ of a variety $X$  over a number field $F$ equipped with a global point and satisfying certain cohomological conditions, with the global points contained within the kernel of all the maps. 
When $X=\bG_m$, this sequence just consists of a single map, the Artin reciprocity law 
\[
\rec\co \bA_F^{\by} \to G_F^{\ab}
\]
from the finite id\`eles of $F$ to the abelianisation of its Galois group, with the property that $\rec(F^{\by})=0$. 

In this paper, we give a topological construction of the non-abelian reciprocity maps, based on homotopical obstruction theory. These are defined under more  general hypotheses than those of \cite{narec1}. In particular, we do not need to assume existence of a global point in order to define the maps, so our reciprocity laws can be used to test the Hasse principle. For arbitrary varieties, the reciprocity maps exist as a tower of spaces over $ X(\bA_F)$, with the cohomological conditions of \cite{narec1} sufficing to ensure that the maps in the tower are injective.

Kim's non-abelian reciprocity laws are based on the lower central series of the geometric fundamental group, but other variants are possible with our approach. One variant produces a tower starting with the Brauer--Manin obstruction, allowing us to compare it with Kim's reciprocity laws. Another variant is based on relative completions, allowing us to study varieties whose geometric fundamental groups are perfect or nearly so.

 For instance, the geometric fundamental group of the moduli stack $\cM_{1,1}$ of elliptic curves is the profinite completion $\widehat{\SL_2(\Z)}$ of $\SL_2(\Z)$. This has finite abelianisation, so  trivial pro-unipotent completion, which means  
the unipotent reciprocity maps of \cite{narec1} are identically zero. However, the Malcev completion of ${\SL_2(\Z)} $ relative to 
$\SL_2(\hat{\Z})$ (resp. $\SL_2(\Ql)$) is a pro-unipotent extension of $\SL_2(\hat{\Z})$  (resp. $\SL_2(\Ql)$) by a pro-unipotent group freely generated by duals of spaces of weight $2$  (resp. level $1$)
modular forms. 
Elements in Galois cohomology of these tensors then give non-trivial obstructions to an ad\'elic elliptic curve with global Tate module underlying a global elliptic curve. 

Our point of view is that the reciprocity maps of \cite{narec1} are obstruction towers in \'etale homotopy theory. The constructions of \cite{arma, fried} associate a pro-simplicial set $X_{\et}$ to any locally Noetherian simplicial scheme $X$. When $X$ is smooth and quasi-projective over a field 
$F$, with separable closure $\bar{F}$, 
\cite[Theorem 11.5]{fried} shows that for $\bar{X}:= X\ten_F\bar{F}$, the geometric homotopy type $(\bar{X})_{\et}$ is the homotopy fibre of $X_{\et}$ over $(\Spec F)_{\et}$, because the space $(\Spec \bar{F})_{\et}$ is contractible. Moreover, $(\bar{X})_{\et} $ is equivalent to the profinite completion of the homotopy type of the complex manifold $X(\Cx)$, for any embedding $F \into \Cx$, so $(\bar{X})_{\et} $ is a $K(\pi,1)$ whenever $X(\Cx)$ is so. 

We are interested in the simplicial set
\[
 \map_{(\Spec F)_{\et}}((\Spec F)_{\et}, X_{\et}),
\]
i.e. the mapping space (or function complex) of pro-simplicial sets over $(\Spec F)_{\et}$ (cf. Definition \ref{mapdef1}).
The space $(\Spec F)_{\et}$ is a $K(\pi,1)$, equivalent to the nerve $B G_F$ of the Galois group $G_F$. Since morphisms of schemes give rise to morphisms of \'etale homotopy types, there is then a natural map
\[
 X(F) \to \map_{BG_F}(BG_F, X_{\et}).
\]

When $X$ is a $K(\pi,1)$ (such as any hyperbolic curve, surface of general type, or abelian variety) over $F$, we have (ignoring issues with basepoints)
\[
 \pi_i\map_{BG_F}(BG_F, X_{\et})= \left\{\begin{matrix}
                                                                \H^{1-i}(F, \pi_1^{\et}(\bar{X})) & i \le 1\\
								 0 & i \ge 2.     
                                                               \end{matrix}\right.
\]
For smooth varieties $X$, $\pi_1^{\et}(\bar{X})$ will always be of strictly negative weights, so $\H^0(F, \pi_1^{\et}(\bar{X}))=0$, and we have
\[
 \map_{BG_F}(BG_F, X_{\et}) \simeq \H^{1}(F, \pi_1^{\et}(\bar{X})),
\]
a discrete set of points. This  non-abelian cohomology set is the main focus of \cite{narec1}, and for hyperbolic curves $X$, Grothendieck's section conjecture amounts to the prediction that the morphism
\[
 X(F) \to \map_{BG_F}(BG_F, X_{\et})
\]
is an equivalence.

In this paper, 
we  construct the reciprocity maps using obstruction theory analogous to \cite{bousfieldHtpySpectralObs}. The idea is to identify towers $\{X_{\et}(n)\}_n$ of quotients of $X_{\et}$ over $BG_F$ for which there exist non-abelian spectral sequences converging to $\map_{BG_F}(BG_F, X_{\et}(\infty))$, where $X_{\et}(\infty):= \ho\Lim_n X_{\et}(n)$. The crucial property making these spectral sequences special is that they incorporate fibre sequences
\[
 \pi_0\map_{BG_F}(BG_F, X_{\et}(n))\to \pi_0\map_{BG_F}(BG_F, X_{\et}(n-1)) \xra{\ob_n} \Ob_n
\]
giving obstructions to lifting homotopy classes of maps. 

We can also take more general spaces as the source, considering a profinite homotopy type  $BG_{ \bA_F^{\in \Sigma}}$ associated to the ad\`ele ring $\bA_F^{\in \Sigma}= \prod_{v \in \Sigma}' F_v$, for  a (possibly infinite) non-empty set $\Sigma$ of finite places. Reciprocity maps then arise in non-abelian spectral sequences converging to the homotopy groups of
\[
 X(\bA_F^{\in \Sigma} )\by^h_{ \map_{BG_F}(BG_{\bA_F^{\in \Sigma}}, X_{\et}(\infty))}\map_{BG_F}(BG_F , X_{\et}(\infty)),
\]
and
he spaces in the spectral sequence are compactly supported cohomology groups $\H_c^*(\sO_{F,\Sigma},-)$, which can be rewritten as duals of Galois cohomology groups by Poitou--Tate duality. Defining the tower $\{X_{\et}(n)\}_n$ in terms of the lower central series of the geometric fundamental group $\pi_1^{\et}(\bar{X})$ recovers Kim's reciprocity  maps \cite{narec1}. Subtler towers based on  relative completions give rise to reciprocity laws in more general situations. 

Explicitly, for a modular curve $Y_{\Gamma}$ we can consider the set  $ Y_{\Gamma}(\bA_F^{\in \Sigma})_0$ of ad\'elic points $x$ for which the Tate module $T_{\ell}E_{\bar{x}}$ of the associated elliptic curve lifts to a  $G_{F,\Sigma}$-representation $\Lambda$. We then construct a sequence of subsets (glossing over subtleties related to potential higher automorphisms for now)
\[
       \ldots \to  Y_{\Gamma}(\bA_F^{\in \Sigma})_1 \to Y_{\Gamma}(\bA_F^{\in \Sigma})_0
\]
containing $Y_{\Gamma}(\sO_{F,\Sigma})$. These are defined inductively by $ Y_{\Gamma}(\bA_F^{\in \Sigma})_{n}= \ob_n^{-1}(0)$, for reciprocity maps
\[
  \ob_n \co  Y_{\Gamma}(\bA_F^{\in \Sigma})_{n-1} \to  \H^2_c( G_{F,\Sigma}, T_n),
\]
where the $\Ql$-vector  spaces $T_n$  are given by homogeneous factors of a Lie algebra  generated by 
\[
 T_1= \prod_m \H^1(\Gamma, V_m)^*\ten V_m,
\]
for irreducible $\SL_2$-representations $V_m$; via Eichler--Shimura, the groups $\H^1(\Gamma, V_m)$ can be interpreted as $\ell$-adic realisations  of motivic modular forms of weight $m+2$ and level $\Gamma$. If we instead assume that the Tate modules $T_pE_{\bar{x}}$  lift to $G_{F,\Sigma}$-representations $\rho_p$ for all primes $p$, then we have a similar sequence, but with $T_1$ now defined in terms of modular forms of all levels. In this case, \cite{HelmVoloch} shows that whenever there is an ad\'elic elliptic curve compatible with the representations $\rho_p$, there must exist a rational elliptic curve giving rise to them, but our obstructions should measure the difference between these elliptic curves.

We can even incorporate higher homotopical information in constructing reciprocity laws for Deligne--Mumford stacks $X$, by looking at completions of \'etale homotopy types   instead of  their fundamental groups. The first obstruction map in the spectral sequence is then just  the Brauer--Manin obstruction when we take the base $X_{\et}(0)$ of  the tower to be $BG_F$, with  refinements for (pro-)\'etale covers given by the subtler obstruction towers. 

The structure of the paper is as follows. Section \ref{obsthsn} lays the topological foundations for constructing  reciprocity laws, developing generalisations of Bousfield's obstruction theory \cite{bousfieldHtpySpectralObs}. The most general statement is Proposition \ref{abcofibregpd}, giving obstruction spaces for homotopy limits of abelian extensions of simplicial groupoids.

Section \ref{rationalsn} then applies this theory to give towers of obstructions to the existence of global points over a number field. The first such tower we consider  is Example \ref{nilpex1}. Writing $ \Pi_n:= \pi_1^{\et}(X,\bar{x})/[\pi_1^{\et}(\bar{X}, \bar{x})]_{n+1}$, $\bar{\pi}:= \pi_1^{\et}(\bar{X}, \bar{x})$, and
  $[\pi]_1:=\pi$, 

with $[\pi]_{k+1}$  the closure of  $[\pi, [\pi]_k]$, this gives a non-abelian spectral sequence 
\[
 E_1^{s,t}= \H^{1+s-t}(G_{F,\Sigma}, [\bar{\pi}]_s/[\bar{\pi}]_{s+1} ) \abuts  \pi_{t-s} \map_{BG_{F,\Sigma}}(BG_{F,\Sigma}, B\Pi_{\infty} ),
\] 
encoding Ellenberg's obstructions. 
There is a unipotent generalisation Example \ref{unipex1}, and further refinements for relative completion. Notably, Examples \ref{modularex2} and \ref{modularex2b} give obstructions, in terms of modular forms,
 to lifting a $G_F$-representation $\Lambda$ to an elliptic curve $E$ over $F$ with Tate module $\Lambda$.

In Section \ref{recsn}, this approach is refined to consider the difference between the obstruction towers for $F$ and $\bA_F^{\Sigma}$, yielding reciprocity laws in terms of Poitou--Tate duality. The main examples of resulting spectral sequences appear in \S \ref{recexamplessn}, with Examples \ref{nilpex3} and \ref{unipex3} recovering and generalising Kim's non-abelian reciprocity laws \cite{narec1}, while reciprocity laws for the stacky modular curve $\cM_{1,1}$ appear in Example \ref{modularex4}, giving obstructions to an ad\'elic elliptic curve being defined over $F$ when its Tate module is known to be a $G_F$-representation. 

Constructions in terms of higher homotopy types are then given in  Example \ref{exhtpy3}, with  
\S \ref{BMsn}
showing how the spectral sequences for higher homotopy types start with the Brauer--Manin obstruction (or a pro-\'etale generalisation) as the first stage in the tower. Proposition \ref{MKBM} gives a sufficient condition for Kim's non-abelian reciprocity laws to recover the Brauer--Manin set. 
In \S \ref{alternative}, we then discuss more concrete ways to construct the reciprocity laws, with a fairly explicit description of the first obstruction for modular curves, and a discussion of the relation between higher Brauer--Manin obstructions and Massey products.

Appendix \ref{adelehtpy} contains the  technicalities needed to work with  higher \'etale homotopical 
invariants of ad\`ele rings, giving a morphism from  $(\Spec \bA_F^{\in \Sigma})_{\et}$ to the homotopy type  $BG_{ \bA_F^{\in \Sigma}}$ governing restricted products of local cohomology groups.

Readers unfamiliar with abstract homotopy theory are advised to skip \S \ref{obsthsn} entirely, starting with \S \ref{alternative} for an overview before reading the examples in \S\S \ref{rationalsn}, \ref{recsn}. We should warn at this stage that none of the examples exhibits explicit classes in Galois cohomology on which to evaluate the obstructions, but the weights of the Galois representations involved suggests they must exist in great generality. 

I would like to thank Minhyong Kim for many helpful discussions, Felipe Voloch for alerting me to relevant references,  Ambrus P\'al for a helpful observation, and the anonymous referee for catching several errors and suggesting improvements. 

\subsection*{Notation}

We will write $\cong$ for isomorphism and $\simeq$ for weak equivalence. 
Let $\bS$ denote the category of simplicial sets with the Kan model structure, and $s\bS$ the category of bisimplicial sets. We denote mapping spaces in model categories by $\map$; in the case of simplicial model categories, these simplicial sets are just given by derived functors of the simplicially enriched $\Hom$ bifunctor, and in general they are given by the function complexes of \cite[\S 5.4]{hovey}.

We fix a number field $F$, and a (possibly infinite) non-empty set $\Sigma$ of finite places of $F$. Then $G_F$ denotes the Galois group of $F$, and its quotient $G_{F,\Sigma}$ is the Galois group of the maximal extension of $\sO_F$ unramified outside $\Sigma$. We  write $\bA_F^{\in \Sigma}$ for the ad\`ele ring
\begin{align*}
 \bA_F^{\in \Sigma}:= \prod'_{v \in \Sigma} F_v 
= \LLim_{\substack{T \subset \Sigma \\ T \text{finite}}} (\prod_{v \in T} F_v \by \prod_{v \in \Sigma -T} \sO_{F,v}).
\end{align*} 

\tableofcontents

\section{Obstruction theory from abelian extensions}\label{obsthsn}

Given a fibration $f\co X \to Y$ of spaces with fibre $Z$, there is a long exact sequence 
\[
 \ldots \to \pi_2Z \to \pi_2X \to \pi_2Y \to  \pi_1Z \to \pi_1X \to \pi_1Y \to \pi_0Z \to \pi_0X \to \pi_0Y
\]
of homotopy groups and sets, where the final map need not be surjective (and at this stage we are being deliberately vague about basepoints).

Our primary goal in this section is to look for cases where this sequence extends one stage further, giving an  obstruction map from  $\pi_0Y$ to some pointed set 
such that the fibre 
over the basepoint is the image of $\pi_0X \to \pi_0Y$.  This will happen if there is some  space $B$ and a map $Y \to B$ in the homotopy category of spaces, with $X$ the homotopy fibre over a point $b \in B$, and in this case $Z$ above  is automatically the loop space $\Omega(B,b)$.

An obvious example of this phenomenon is when $X$ is a principal $G$-bundle over $Y$ for a topological group $G$, so arises as the homotopy fibre of a map $Y \to BG$. We then have a long exact sequence
\[
 \ldots \to \pi_1Y \to \pi_1BG \to \pi_0X \to \pi_0Y \to \pi_0BG,
\]
noting that $\pi_nBG=\pi_{n-1}G$. 

In this form, this statement is telling us nothing new, since $\pi_0X \to \pi_0Y$ is automatically surjective in such cases. However the characterisation of $X$ as a homotopy fibre also passes to homotopy limits of such diagrams. Given a small category $I$, together with $I$-diagrams $Y$ and $G$ in simplicial sets and simplicial groups, and a  principal $G$-bundle $X$ over $Y$, we can characterise $X$ as the homotopy fibre of a map $Y \to BG$ in the homotopy category, and then 
\[
 \ho\Lim_{i \in I} X(i) \to \ho\Lim_{i \in I} Y(i)\to \ho\Lim_{i \in I} BG(i).
\]
is a homotopy fibre sequence, so gives rise to  a long exact sequence of homotopy groups and sets of the desired form; this is essentially the content of Corollary \ref{obs2} below.

\subsection{Central and abelian extensions of simplicial groups}\label{sgpsn}

\subsubsection{Central extensions}

We now look at principal fibrations in the category of groups. First observe that an internal group object in the category of groups is an abelian group $A$ by the Eckmann--Hilton argument, with multiplication  $A \by A \xra{\cdot} A$ being a group homomorphism.

An $A$-space in groups is then a group $G$ equipped with a group homomorphism $\mu \co A \by G \to G$ such that
the diagram
\[
 \begin{CD}
  A \by A \by G @>{(\id_A, \mu)}>> A \by G \\
@V{(\cdot , \id_G)}VV  @VV{\mu}V\\
A \by G @>{\mu}>> G
\end{CD}
\]
commutes. In other words, $\mu(a,g)= \rho(a)g$, for the group homomorphism $\rho\co A \to Z(G)$ to the centre of $G$ given by $\rho(a)= \mu(a,1)$. The $A$-action is faithful if  $\rho$ is injective, and then $G$ is a principal $A$-space over $G/A$. 

Applying the nerve functor, we have a simplicial abelian group $BA$ (the group homomorphism $A \by A \xra{\cdot} A$ inducing a multiplication $BA \by BA \to BA$, and for every   principal $A$-space $G$ in groups over $H$, we get a principal $BA$-fibration $BG$ over $BH$.

\begin{definition}\label{barwdef}
Define  $\nabla:s\bS \to \bS$ to be the right adjoint to  Illusie's total $\Dec$ functor given by $\DEC(X)_{mn}= X_{m+n+1}$. Explicitly,
\[
 \nabla_p(X) = \{(x_0, x_1, \ldots, x_p) \in
 \prod^p_{i=0} X_{i,p-i} | \pd^v_0 x_i = \pd^h_{i+1}x_{i+1},\, \forall 0 \le i <p\}
\]
with operations
\begin{eqnarray*}
 \pd_i(x_0, \ldots, x_p) &=& (\pd^v_i x_0, \pd^v_{i-1}x_1, \ldots , \pd^v_1 x_{i-1}, \pd^h_ix_{i+1}, \pd^h_i x_{i+2}, \ldots, \pd^h_i x_p),\\
\sigma_i(x_0, \ldots, x_p) &=& (\sigma^v_i x_0,\sigma^v_{i-1}x_1, \ldots , \sigma^v_0 x_i, \sigma^h_i x_i, \sigma^h_i x_{i+1}, \ldots, \sigma^h_i x_p).
\end{eqnarray*}

Given a simplicial diagram $\Gamma$ of groupoids, the nerve $B\Gamma$ is a bisimplicial set, and we write $\bar{W}\Gamma:= \nabla B\Gamma$, noting that this agrees with the definition of \cite[\S V.7]{sht} when $\Gamma$ has constant objects. 
\end{definition}

Note that the loop space $\Omega\bar{W}G$  of $\bar{W}G$ is weakly equivalent to $G$, so in particular $\pi_i \bar{W}G \cong \pi_{i-1}G$, with $\pi_0G=*$. 

In \cite{CRdiag},  it is established that the canonical natural transformation
\[
\diag X \to \nabla X
\]
from the diagonal is a weak equivalence for all $X$. 
Thus $\nabla X$ is a model for the homotopy colimit
\[
 \holim_{\substack{\lra \\n \in \Delta^{\op}}} X_n,
\]
and in particular $\bar{W}\Gamma$ a model for $\ho\LLim_{n\in \Delta^{\op}} B(\Gamma_n)$.

\begin{proposition}\label{centralcofibre}
 Given a surjection $G \to H$ of simplicial groups with central kernel $A$, there is a simplicial set $Y'$ weakly equivalent to $\bar{W}H$ and a map $f\co Y' \to \bar{W}^2A$ with fibre $\bar{W}G$, which is also the homotopy fibre. Moreover, the space $Y'$ and weak equivalence $w\co Y' \to \bar{W}H$ can be chosen functorially.   
\end{proposition}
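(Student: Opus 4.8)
The plan is to use centrality to upgrade $\bar{W}G\to\bar{W}H$ to a principal $\bar{W}A$-bundle, and then to obtain $Y'$ as the balanced product of that bundle with the universal $\bar{W}A$-bundle $W\bar{W}A\to\bar{W}^2A$; this simultaneously exhibits $\bar{W}G$ as the strict fibre of a Kan fibration onto $\bar{W}^2A$ and leaves a projection onto $\bar{W}H$ whose fibre is contractible.

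First I would isolate the one place centrality is used: since $A\subseteq Z(G)$, the multiplication map $m\co A\times G\to G$, $(a,g)\mapsto ag$, is a homomorphism of simplicial groups (one checks $m((a,g)(a',g'))=aa'gg'=(ag)(a'g')$ using $ga'=a'g$). Being a subgroup of a centre, $A$ is abelian, so $\bar{W}A$ is a simplicial abelian group and the iterates $W\bar{W}A$ and $\bar{W}^2A:=\bar{W}\bar{W}A$ are defined. Since $\bar{W}=\nabla B$ preserves finite products, applying it to $m$ yields an action $\bar{W}A\times\bar{W}G\to\bar{W}G$; reading off the explicit description $(\bar{W}K)_n=K_{n-1}\times\cdots\times K_0$, this action is componentwise translation. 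Hence it is levelwise free, it covers the trivial action on $\bar{W}H$, it restricts on the fibre of $\bar{W}G\to\bar{W}H$ over the basepoint to the translation action of $\bar{W}A$ on itself, and — using that $G\onto H$ is surjective — it identifies $\bar{W}G/\bar{W}A$ with $\bar{W}H$. So $\bar{W}G\to\bar{W}H$ is a principal $\bar{W}A$-bundle, in particular a Kan fibration.

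Next I would bring in the universal principal $\bar{W}A$-bundle $p\co W\bar{W}A\to\bar{W}^2A$, whose total space is contractible and whose fibre over the basepoint is $\bar{W}A$ with its translation action, and set
\[
 Y':=\bar{W}G\times_{\bar{W}A}W\bar{W}A=(\bar{W}G\times W\bar{W}A)/\bar{W}A
\]
for the diagonal action, with $w\co Y'\to\bar{W}H$ induced by the first projection and $f\co Y'\to\bar{W}^2A$ by the second. Then $w$ is the bundle over $\bar{W}H$ associated to $\bar{W}G\to\bar{W}H$ with fibre the contractible complex $W\bar{W}A$, hence a trivial Kan fibration and in particular a weak equivalence; while $f$ is the bundle over $\bar{W}^2A$ associated to $p$ with fibre $\bar{W}G$, hence a Kan fibration whose strict fibre over the basepoint is $\bar{W}G\times_{\bar{W}A}\bar{W}A$, canonically isomorphic to $\bar{W}G$ by the unit isomorphism $P\times_K K\cong P$ for the translation action. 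Since $f$ is a fibration between Kan complexes ($\bar{W}^2A$ is a classifying complex and $Y'\simeq\bar{W}H$), this strict fibre is also its homotopy fibre. Finally $Y'$, $w$ and $f$ are built by applying the functors $\bar{W}$ and $W$ to the diagram $A\into G\onto H$, so everything, including the weak equivalence $w$, is manifestly functorial in the central extension.

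The steps requiring care rather than any new idea are the bookkeeping with the explicit formula for $\bar{W}$ needed to see that $\bar{W}G\to\bar{W}H$ is a principal $\bar{W}A$-bundle with the stated fibre and quotient, and the standard facts that a principal bundle for a simplicial group — and more generally the bundle associated to one along a $\bar{W}A$-space that is a Kan complex — is a Kan fibration; this last point is what lets ``strict fibre'' coincide with ``homotopy fibre'' on both the $w$-side and the $f$-side, and is essentially the only input beyond formal manipulation of the functors $\bar{W}$ and $W$.
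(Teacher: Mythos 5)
Correct, and essentially the same approach as the paper: both recognize $\bar{W}G \to \bar{W}H$ as a principal $\bar{W}A$-bundle via centrality and form its homotopy quotient to produce $Y'$, reading off $w$ and $f$ from the two resulting projections. Your balanced-product model $\bar{W}G \times_{\bar{W}A} W\bar{W}A$ is a standard alternative to the paper's $\bar{W}[\bar{W}G/\bar{W}A]$; the paper chooses the latter so that the same argument carries over unchanged to the non-central abelian extensions of Proposition~\ref{abcofibre}.
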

\begin{proof}
Writing $K=\bar{W}A$, the statement is essentially the well-known result (\cite[Theorem V.3.9]{sht}) that $\bar{W}K$ classifies principal fibrations.
The reasoning above applied to simplicial groups gives us a bisimplicial abelian group $BA$ and a principal $BA$-fibration $BG$ over $BH$. Applying the codiagonal functor $\nabla$ then gives us a simplicial abelian group $\bar{W}A$ and a principal $\bar{W}A$-fibration $\bar{W}G$ over $\bar{W}H$. The map $f$ then just comes by taking the homotopy quotient of $\bar{W}G \to \bar{W}H$ by the action of $\bar{W}A$.

Explicitly, we  set $Y'= \bar{W}[ \bar{W}G/\bar{W}A]$, for the simplicial groupoid $[\bar{W}G/\bar{W}A]$ with objects $\bar{W}G$ and morphisms given by $\bar{W}A$ acting on the right. Applying $\bar{W}$ twice to the map $[G/A] \to [H/1]$ of groupoids in groups gives the weak equivalence $Y' \to \bar{W}H$, since $\bar{W}[Y/1]= Y$ and the fibre $\bar{W}[A/A]$ is contractible. Similarly, the Kan fibration $Y' \to \bar{W}^2A$ comes from the map $[G/A] \to [1/A]$ of  groupoids in groups.
\end{proof}

\subsubsection{Abelian extensions}

More generally, given a group $H$, a group object $\Gamma$ in the comma category $\Gp \da H$ of groups over $H$ is of the form $\Gamma= H \ltimes A$, for an abelian group $A$ equipped with an $H$-action.

Then a $\Gamma$-space in groups over $H$ consists of a group $G$ and a surjection $G \to H$ together with an associative action $\Gamma\by_HG \to G$ (all maps being group homomorphisms). Equivalently, for the group $A$ above, we have a group homomorphism $G \ltimes A \to G$ over $H$, hence a $G$-equivariant map $A \to \ker(G\to H)$. 

The condition for $G$ to be a principal $\Gamma$-space is then just that the map $A \to \ker(G \to H)$ be an isomorphism. In other words, a pair $(\Gamma,G)$ is the same as an abelian group $A$ equipped with an $H$-action together with a surjective group homomorphism $G \to H$ with kernel $A$.

Given such a $G$, we can take the nerve, giving a surjective fibration $BG \to BH$ of simplicial sets with fibre $BA$ over the unique vertex of $BH$. The simplicial set $B(H \ltimes A)$ is a group object in  simplicial sets over  $BH$, and $BG$ is a principal $ B(H \ltimes A)$-bundle.  

\begin{proposition}\label{abcofibre}
 Take a surjection $G \to H$ of simplicial groups with abelian kernel $A$. Then there exists a fibration 
\[
 Y'\to \bar{W}[ H \ltimes \bar{W}A]
\]
for which the projection  $Y' \to \bar{W}H$ is a weak equivalence,  with
\[
 Y'\by_{ \bar{W}[H \ltimes\bar{W} A ]} \bar{W}H \cong \bar{W}G.
\]
Moreover, the space $Y'$ and weak equivalence $w\co Y' \to Y$ can be chosen functorially.   
 \end{proposition}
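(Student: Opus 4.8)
The strategy is to repeat the proof of Proposition \ref{centralcofibre} with the single modification that the conjugation action of $G$ on its abelian kernel $A$, now possibly non-trivial, descends to an action of $H=G/A$ on $A$; it is precisely this action that forces the target of the obstruction map to be the ``twisted'' object $\bar{W}[H\ltimes\bar{W}A]$ lying over $\bar{W}H$ in place of the plain $\bar{W}^2A$.

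I would keep the same candidate space, $Y':=\bar{W}[\bar{W}G/\bar{W}A]$, the codiagonal nerve of the simplicial groupoid with objects $\bar{W}G$ and morphisms the free right $\bar{W}A$-action coming from right translation by $A\subseteq G$. Since $\bar{W}$, $B$, $\nabla$ and $[-/-]$ are all functors, $Y'$ and every map produced below are functorial in the extension $A\to G\to H$, which settles the functoriality clause. The projection $w\co Y'\to\bar{W}H$ is produced exactly as in Proposition \ref{centralcofibre}: it is $\bar{W}$ applied to the evident map of simplicial groupoids $[\bar{W}G/\bar{W}A]\to[\bar{W}H/1]$; since $\bar{W}[Y/1]=Y$ while the fibre $\bar{W}[\bar{W}A/\bar{W}A]$ is contractible, $w$ is a Kan fibration with contractible fibres, hence a weak equivalence. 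None of this uses centrality.

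The new input is the map to the classifying object and the identification of the pullback. As recalled before the statement, the nerve $BG\to BH$ is a principal $B(H\ltimes A)$-bundle for the group object $B(H\ltimes A)$ over $BH$; applying $\nabla$, which as a right adjoint preserves the relevant pullbacks and, as in the proof of Proposition \ref{centralcofibre}, carries principal bundles to principal bundles, shows that $\bar{W}G\to\bar{W}H$ is a principal $\bar{W}(H\ltimes A)$-bundle over $\bar{W}H$. The fibration $p\co Y'\to\bar{W}[H\ltimes\bar{W}A]$ then comes from applying $\bar{W}$ to the map of simplicial groupoids $[\bar{W}G/\bar{W}A]\to[H\ltimes\bar{W}A]$ recording a morphism $(g,a)$ as the pair made of the coset of $g$ and the element $a$ together with its $H$-action datum, and it lies over $\bar{W}H$ in the sense that the composite $Y'\xra{p}\bar{W}[H\ltimes\bar{W}A]\to\bar{W}H$ equals $w$. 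Pulling $p$ back along the section $\bar{W}H\to\bar{W}[H\ltimes\bar{W}A]$ induced by the split surjection $H\ltimes\bar{W}A\to H$ amounts to taking $\bar{W}$ of the fibre of $[\bar{W}G/\bar{W}A]\to[H\ltimes\bar{W}A]$ over this section, which, again by $\bar{W}[Y/1]=Y$ and freeness of the $\bar{W}A$-action, collapses to $\bar{W}G$; hence $Y'\times_{\bar{W}[H\ltimes\bar{W}A]}\bar{W}H\cong\bar{W}G$.

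I expect the main obstacle to be this last construction of $p$ together with the verification that the resulting fibre sequence $\bar{W}G\to Y'\to\bar{W}[H\ltimes\bar{W}A]$ is the one genuinely induced by the extension $A\to G\to H$, with the correct $H$-twist, and not a split or otherwise degenerate variant. This requires careful bookkeeping of how $\nabla$ interacts with the two simplicial directions and with the semidirect-product structure of $B(H\ltimes A)$, and of the compatibility of the groupoid-level maps $[\bar{W}G/\bar{W}A]\to[H\ltimes\bar{W}A]$ and $[\bar{W}G/\bar{W}A]\to[\bar{W}H/1]$. Once that is in place, that $p$ is a Kan fibration and that the displayed square is strictly cartesian are formal, and the weak-equivalence and functoriality assertions reduce verbatim to those of Proposition \ref{centralcofibre}.
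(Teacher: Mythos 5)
Your proposal contains a genuine gap in the very first step: you claim to ``keep the same candidate space'' $Y' := \bar{W}[\bar{W}G/\bar{W}A]$, with the intermediate simplicial groupoid having objects $\bar{W}G$ and morphisms ``the free right $\bar{W}A$-action coming from right translation by $A \subseteq G$.'' In the non-central case this does not produce a well-defined simplicial groupoid: the coordinatewise right-translation map $\bar{W}G \times \bar{W}A \to \bar{W}G$ fails to be simplicial, because the inner face maps of $\bar{W}$ involve products of the form $\pd_0 g_{n-i} \cdot g_{n-i-1}$, and commuting $\pd_0 a_{n-i}$ past $g_{n-i-1}$ needs exactly centrality. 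The paper's proof therefore does \emph{not} keep the same candidate space: it replaces it by $Y' := \bar{W}[\bar{W}(G \ltimes A) \Rightarrow \bar{W}G]$, where the morphism simplicial set is $\bar{W}(G \ltimes A)$ (the $\bar{W}$-construction applied to the simplicial group $G \ltimes A$, with $G$ acting on $A$ by conjugation), rather than the direct product $\bar{W}G \times \bar{W}A$. These coincide precisely when $A$ is central, since then $G \ltimes A = G \times A$ and $\bar{W}$ preserves products; in general they differ, and it is exactly that difference that encodes the twist. The structure maps (source $(g,a)\mapsto g$ and target $(g,a)\mapsto ga$) \emph{are} homomorphisms out of $G \ltimes A$, so $\bar{W}$ applies to the groupoid in groups $[(G\ltimes A)\Rightarrow G]$ directly, which is what the paper uses.

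You do correctly identify $\bar{W}(H \ltimes A)$ as the structure group and $\bar{W}[H \ltimes \bar{W}A]$ as the target, and you flag that the interaction between $\nabla$ and the semidirect-product structure is where the difficulty lies. But the resolution is not more careful bookkeeping on the same space; it is changing the morphism simplicial set from $\bar{W}G \times \bar{W}A$ to $\bar{W}(G\ltimes A)$. Once that change is made, the rest of your outline (the weak equivalence to $\bar{W}H$ via the fibre $\bar{W}[\bar{W}(A\ltimes A)\Rightarrow \bar{W}A]$, the fibration to $\bar{W}[H\ltimes \bar{W}A]$ via $[(G\ltimes A)\Rightarrow G]\to[(H\ltimes A)\Rightarrow H]$, and the pullback identification) matches the paper and goes through as you describe.
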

\begin{proof}
We adapt the proof of Proposition \ref{centralcofibre}. Set $Y'= \bar{W}[\bar{W}(G \ltimes A) \Rightarrow \bar{W}G]$, for the simplicial groupoid $[\bar{W}(G \ltimes A) \Rightarrow \bar{W}G]$ with objects $\bar{W}G$ and morphisms  $\bar{W}(G \ltimes A)$. Applying $\bar{W}$ twice to the map $[(G \ltimes A) \Rightarrow G] \to [H \Rightarrow H]$ of groupoids in groups gives the weak equivalence $Y' \to \bar{W}H$, since $\bar{W}[Y \Rightarrow Y]= Y$ and the fibre $\bar{W}[A/A]=\bar{W}[A\ltimes A \Rightarrow A]$ is contractible. Similarly, the Kan fibration $Y' \to \bar{W}[ H \ltimes \bar{W}A]$ comes from the map $[(G \ltimes A) \Rightarrow G] \to [(H \ltimes A)\Rightarrow H]$ of  groupoids in groups.
\end{proof}

\subsubsection{Groupoids}

The constructions above generalise to groupoids, and we will not concern ourselves with the full generality of internal groups in groupoids. We just observe that any abelian group is \emph{a fortiori} an internal group in groupoids with one object, and that for any groupoid $H$, an $H$-representation $A$ in abelian groups has associated  groupoid $H \ltimes A$, which is a group object in groupoids over $H$.

\begin{definition}
Say that a morphism $f\co G \to H$ is an abelian extension if it is an isomorphism on objects, surjective on morphisms, and the groups $A(x):=\ker (f\co G(x,x) \to H(fx,fx))$ are abelian for all objects $x$ of $G$. 
\end{definition}

Thus for any abelian extension $G \to H$ of groupoids with kernel $A$, we get a surjective fibration $BG \to BH$ of simplicial sets, and the fibre over $fx \in (BH)_0$ is just $A(x)$. Moreover, $B(H \ltimes A)$ is a group object in  simplicial sets over  $BH$, and $BG$ is a principal $ B(H \ltimes A)$-bundle.  

\begin{proposition}\label{abcofibregpd}
 Given an abelian extension $G \to H$ of simplicial groupoids with abelian kernel $A$, there is  fibration $Y'\to \bar{W}[H \ltimes A]$ such that the projection $Y' \to \bar{W}H$ is a weak equivalence, with
\[
 Y'\by_{ \bar{W}[H \ltimes A ]} \bar{W}H \cong \bar{W}G.
\]
Moreover, the space $Y'$ and weak equivalence $w\co Y' \to Y$ can be chosen functorially.   
 \end{proposition}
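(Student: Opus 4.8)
The plan is to run the proof of Proposition \ref{abcofibre} essentially verbatim, carrying out every construction internally to the category of simplicial groupoids in place of simplicial groups. The proofs of Propositions \ref{centralcofibre} and \ref{abcofibre} already work internally to groups --- they manipulate groupoid objects such as $[G/A]$ and $[(G\ltimes A)\Rightarrow G]$ --- and the only external inputs they use are: (i) that the nerve of the relevant surjection is a surjective Kan fibration of bisimplicial sets exhibiting the extension as a principal bundle; (ii) that the codiagonal $\nabla$ of Definition \ref{barwdef}, being a right adjoint, preserves products and fibre products, hence group objects and principal-bundle structures; (iii) that $\bar W$ of a contractible simplicial groupoid is contractible and $\bar W[\mathcal Y\Rightarrow\mathcal Y]=\mathcal Y$; and (iv) that every construction used is functorial. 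Input (i) in the present setting is exactly the assertion made just before the proposition: for an abelian extension $G\to H$ of simplicial groupoids with kernel $A$, the nerve $BG\to BH$ is a surjective fibration of bisimplicial sets with fibre $A(x)$ over $fx$, and $BG$ is a principal $B(H\ltimes A)$-bundle over $BH$, where $H\ltimes A$ uses the $H$-representation structure on $A$.

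Concretely, I would first apply $\nabla$ to the principal $B(H\ltimes A)$-bundle $BG\to BH$. Since $\nabla$ preserves products and fibre products, the principal-bundle data pass through, so $\bar W G\to\bar W H$ is a principal $\bar W[H\ltimes A]$-bundle of simplicial sets, in particular a Kan fibration. I would then set
\[
 Y':=\bar W[\,\bar W(G\ltimes A)\Rightarrow\bar W G\,],
\]
the $\bar W$ of the simplicial groupoid with object simplicial set $\bar W G$ and morphism simplicial set $\bar W(G\ltimes A)$, the $G$-action on $A$ being induced by $f$, exactly as in the proof of Proposition \ref{abcofibre}. Applying $\bar W$ to the two maps of groupoid objects
\[
 [H\Rightarrow H]\longleftarrow[(G\ltimes A)\Rightarrow G]\longrightarrow[(H\ltimes A)\Rightarrow H]
\]
then yields a projection $Y'\to\bar W H$ and a fibration $Y'\to\bar W[H\ltimes A]$. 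Base-changing the right-hand map along $[H\Rightarrow H]\hookrightarrow[(H\ltimes A)\Rightarrow H]$ replaces $(G\ltimes A)\Rightarrow G$ by $G\Rightarrow G$, so (using that $\nabla$ preserves the relevant fibre product and $\bar W[\mathcal Y\Rightarrow\mathcal Y]=\mathcal Y$) the fibre product $Y'\times_{\bar W[H\ltimes A]}\bar W H$ is $\bar W G$, as required.

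For the weak equivalence, I would argue as in Proposition \ref{abcofibre}: the map $Y'\to\bar W H$ is a fibration, and its fibre over the vertex of $\bar W H$ corresponding to an object $x$ is $\bar W[A(x)/A(x)]$, the $\bar W$ of the translation groupoid of $A(x)$ acting on itself by translation, which is contractible. Since this holds at every vertex, $Y'\to\bar W H$ is a weak equivalence. Functoriality of $Y'$ and of the weak equivalence is automatic, since $B$, $\nabla$, $\bar W$, $\ltimes$, the translation-groupoid construction and $[\,\cdot\Rightarrow\cdot\,]$ are all functors, so no non-natural choices are made.

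The step I expect to cost the most effort is not any single computation but the honest bookkeeping needed to run the relative classifying-space and principal-bundle formalism internally to simplicial groupoids --- in particular verifying $\bar W[\mathcal Y\Rightarrow\mathcal Y]=\mathcal Y$, the contractibility of $\bar W[A(x)/A(x)]$, and the precise identification of the target as $\bar W[H\ltimes A]$ rather than a further delooping --- since we are no longer in the setting of \cite[Theorem V.3.9]{sht}, where objects are constant. The one genuinely load-bearing use of the hypotheses in the definition of an abelian extension is in input (i): for an arbitrary functor of groupoids the nerve need not be a Kan fibration, and it is precisely the bijectivity on objects, together with surjectivity on morphisms, that makes $BG\to BH$ a fibration with fibre $A(x)$ over $fx$.
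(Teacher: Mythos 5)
Your proposal is correct and takes essentially the same approach as the paper: the paper's proof of this proposition consists of the single sentence ``The proof of Proposition \ref{abcofibre} carries over,'' and what you have written is precisely a faithful expansion of what that carrying-over entails, including the construction of $Y'=\bar W[\,\bar W(G\ltimes A)\Rightarrow\bar W G\,]$, the two maps of groupoid objects, the contractible fibre $\bar W[A(x)/A(x)]$, and the functoriality observations. The one place where you correctly flag (without fully discharging) extra bookkeeping --- checking that the target really is $\bar W[H\ltimes A]$ rather than an extra delooping --- is also left implicit in the paper.
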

\begin{proof}
 The proof of Proposition \ref{abcofibre} carries over, replacing groupoids in groups with groupoids in groupoids.
\end{proof}


\subsection{Passage to homotopy limits}

For a small category $I$, we have a limit functor $\Lim_I \co \bS^I \to \bS$ from $I$-diagrams of simplicial sets to simplicial sets. Recall from \cite[\S VIII.2]{sht} or \cite[Ch. 18]{hirschhorn} that $\ho\Lim_I \co \Ho(\bS^I) \to \Ho(\bS)$ is the right-derived functor of $\Lim_I$; in other words, it is the universal functor under $\Lim_I$ preserving weak equivalences.

\begin{definition}
 Given a small category $I$ and simplicial group-valued functors $G,H \co I\to s\gp$, we say that a natural transformation $G \to H$ is a central (resp. abelian) extension if the maps $G(i) \to H(i)$ are so, for all $i \in I$.
\end{definition}

\begin{proposition}\label{centralholim}
Given a central extension $f\co G \to H$ of $I$-diagrams with kernel $A$,  there is a morphism $\ho\Lim_{i \in I} \bar{W}H(i)  \to \ho\Lim_{i \in I}\bar{W}^2A(i)$ in the homotopy category of simplicial sets with homotopy fibre $\ho\Lim_{i \in I} \bar{W}G(i)$ over the distinguished point $*$.
\end{proposition}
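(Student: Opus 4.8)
The plan is to run Proposition \ref{centralcofibre} levelwise over $I$ and then pass to homotopy limits, using that $\ho\Lim_I$ is a right-derived functor and hence preserves homotopy-cartesian squares.

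\emph{Levelwise construction.} For each $i \in I$, applying Proposition \ref{centralcofibre} to the central extension $G(i) \to H(i)$ with kernel $A(i)$ produces a simplicial set $Y'(i)$, a weak equivalence $w(i)\co Y'(i) \xra{\sim} \bar{W}H(i)$, and a Kan fibration $f(i)\co Y'(i) \to \bar{W}^2 A(i)$ whose fibre --- and hence homotopy fibre --- over the canonical basepoint is $\bar{W}G(i)$. Since the construction is functorial in the central extension (explicitly $Y'(i)=\bar{W}[\bar{W}G(i)/\bar{W}A(i)]$, with $w(i)$ and $f(i)$ induced by the maps $[G/A]\to[H/1]$ and $[G/A]\to[1/A]$ of groupoids in groups, all visibly natural), these assemble into $I$-diagrams fitting into natural transformations $\bar{W}G \to Y' \xra{w} \bar{W}H$ and $Y' \to \bar{W}^2 A$. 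The zero section of each simplicial abelian group $\bar{W}^2 A(i)$ is natural in $i$, so, writing $*$ for the constant $I$-diagram at a point, we obtain a square of $I$-diagrams
\[
 \begin{CD}
  \bar{W}G @>>> Y' \\
  @VVV @VVV \\
  * @>>> \bar{W}^2 A
 \end{CD}
\]
which is objectwise homotopy cartesian.

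\emph{Passage to homotopy limits.} The functor $\Lim_I \co \bS^I \to \bS$ is right adjoint to the constant-diagram functor, so its right-derived functor $\ho\Lim_I$ preserves homotopy limits, in particular homotopy-cartesian squares (this also follows directly from $\ho\Lim_I$ commuting with the homotopy limit over a cospan and preserving objectwise weak equivalences; see \cite[Ch.~18--19]{hirschhorn} or \cite[\S VIII.2]{sht}). Applying it to the square above yields a homotopy-cartesian square
\[
 \begin{CD}
  \ho\Lim_I \bar{W}G @>>> \ho\Lim_I Y' \\
  @VVV @VVV \\
  \ho\Lim_I * @>>> \ho\Lim_I \bar{W}^2 A .
 \end{CD}
\]
Now $\ho\Lim_I *\simeq *$, since the homotopy limit of a constant point-valued diagram is contractible, and $w$ induces a weak equivalence $\ho\Lim_I Y' \xra{\sim} \ho\Lim_I \bar{W}H$ because $\ho\Lim_I$ preserves objectwise weak equivalences by construction. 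Composing the map $\ho\Lim_I Y' \to \ho\Lim_I \bar{W}^2 A$ with the inverse of this equivalence in $\Ho(\bS)$ produces the required morphism $\ho\Lim_I \bar{W}H \to \ho\Lim_I \bar{W}^2 A$, and the homotopy-cartesian square identifies its homotopy fibre over the distinguished point with $\ho\Lim_I \bar{W}G$, which is the assertion.

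\emph{Main obstacle.} The only non-formal ingredient is that $\ho\Lim_I$ preserves homotopy-cartesian squares, equivalently commutes with the formation of homotopy fibres over a natural basepoint; this, together with checking that the basepoints are genuinely natural in $i$ so the relevant square really is a square of $I$-diagrams, is where the write-up needs care. Concretely one fixes a model structure on $\bS^I$ computing $\ho\Lim_I$ (e.g. the injective one), replaces the square by an objectwise-weakly-equivalent one of fibrant objects in which $Y' \to \bar{W}^2 A$ is a fibration in $\bS^I$, and then uses that strict limits preserve pullbacks and fibrations, so the strict fibre computes the derived homotopy fibre; alternatively one simply invokes the general fact that total right-derived functors of right Quillen functors preserve homotopy limits. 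Everything else --- the functoriality already granted by Proposition \ref{centralcofibre}, contractibility of $\ho\Lim_I *$, and $\ho\Lim_I$ preserving weak equivalences --- is routine.
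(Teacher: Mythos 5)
Your proof is correct and takes essentially the same route as the paper, which simply says "We just apply the derived functor $\ho\Lim_{i\in I}$ to the diagrams from Proposition \ref{centralcofibre}." You have merely spelled out the details that the paper leaves implicit (functoriality giving $I$-diagrams, $\ho\Lim_I$ preserving homotopy-cartesian squares and objectwise weak equivalences).
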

\begin{proof}
We just apply the derived functor $\ho\Lim_{i \in I}$ to the diagrams from Proposition  \ref{centralcofibre}.
\end{proof}
Note that when $I= \Delta$, the simplex category, this recovers a fairly general case of  Bousfield's obstruction maps from  \cite{bousfieldHtpySpectralObs}.

\begin{corollary}\label{obs1}
In the scenario of Proposition \ref{centralholim}, there is a  sequence 
\[
\pi_0 \ho\Lim_{i \in I} \bar{W}G(i) \xra{f_*} \pi_0\ho\Lim_{i \in I} \bar{W}H(i)  \xra{\delta_*} \pi_0\ho\Lim_{i \in I} \bar{W}^2A(i)
\]  
of sets, exact in the sense that the fibre of $\delta_*$ over $0$ is the image of $f_*$.

Moreover,  there is a group action of $\pi_0\ho\Lim_{i \in I} \bar{W}A(i) $ on $\pi_0 \ho\Lim_{i \in I} \bar{W}G(i)$ whose orbits are precisely the fibres of $f_*$. 

For any $x \in \ho\Lim_{i \in I} \bar{W}G(i)$, with $y=f_*x$, the homotopy fibre of $f$ over $y$ is weakly equivalent  to $\ho\Lim_{i \in I} \bar{W}A(i) $,
and the sequence above 
extends to a long exact sequence
\[
\xymatrix@R=0ex{
\cdots  \ar[r]^-{f_*}&\pi_n(\ho\Lim_{i \in I} \bar{W}H(i) ,y) \ar[r]^-{\delta}&  \pi_{n-1}\ho\Lim_{i \in I} \bar{W}A(i) \ar[r] &\pi_{n-1}( \ho\Lim_{i \in I} X(i) ,x)\ar[r]^-{f_*}&\cdots\\ 
\cdots \ar[r]^-{f_*}&\pi_1( \ho\Lim_{i \in I} \bar{W}H(i),y) \ar[r]^-{\delta}&  \pi_0\ho\Lim_{i \in I} \bar{W}A(i)  \ar[r]^-{-*x} &\pi_0 \ho\Lim_{i \in I} \bar{W}G(i).
}
\]
\end{corollary}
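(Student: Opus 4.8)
The plan is to read off the whole statement from the homotopy fibre sequence produced by Proposition \ref{centralholim}, the one extra input being that its base $\ho\Lim_{i\in I}\bar{W}^2A(i)$ is, up to weak equivalence, an abelian group object, so all its path components and all its based loop spaces look alike. Concretely, I would first replace $\delta\co\ho\Lim_{i}\bar{W}H(i)\lra\ho\Lim_{i}\bar{W}^2A(i)$ by a Kan fibration $p\co\tilde{E}\lra\ho\Lim_{i}\bar{W}^2A(i)$ with $\tilde{E}\simeq\ho\Lim_{i}\bar{W}H(i)$ over the target; since $\ho\Lim_{i\in I}$ carries homotopy fibre sequences to homotopy fibre sequences, Proposition \ref{centralholim} gives $p^{-1}(*)\simeq\ho\Lim_{i}\bar{W}G(i)$, with $f_*$ the fibre inclusion. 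The displayed pointed set sequence $\pi_0\ho\Lim_i\bar{W}G(i)\xra{f_*}\pi_0\ho\Lim_i\bar{W}H(i)\xra{\delta_*}\pi_0\ho\Lim_i\bar{W}^2A(i)$, exact in the stated sense, is then the standard tail $\pi_0(\mathrm{fibre})\ra\pi_0(\mathrm{total})\ra\pi_0(\mathrm{base})$ of the fibration $p$ (cf.\ \cite{sht}).

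Next I would record the $H$-group structure on the base. As $\bar{W}^2A(i)$ is a simplicial abelian group functorially in $i$ and $\ho\Lim_{i\in I}$ preserves finite products, $\ho\Lim_i\bar{W}^2A(i)$ is a homotopy-associative, homotopy-commutative grouplike $H$-space; so translation identifies all of its path components, and $\Omega(\ho\Lim_i\bar{W}^2A(i),e)\simeq\Omega(\ho\Lim_i\bar{W}^2A(i),*)$ for every point $e$. Moreover $\Omega$ commutes with homotopy limits, and by the remark after Definition \ref{barwdef} applied to the simplicial group $\bar{W}A(i)$ one has $\Omega\bar{W}^2A(i)=\Omega\bar{W}(\bar{W}A(i))\simeq\bar{W}A(i)$, so $\Omega(\ho\Lim_i\bar{W}^2A(i),*)\simeq\ho\Lim_i\bar{W}A(i)$ and hence $\pi_n(\ho\Lim_i\bar{W}^2A(i),*)\cong\pi_{n-1}\ho\Lim_i\bar{W}A(i)$ for $n\ge1$; in particular $\pi_0\ho\Lim_i\bar{W}A(i)$ is a group (indeed abelian).

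The group action of $\pi_0\ho\Lim_i\bar{W}A(i)$ on $\pi_0\ho\Lim_i\bar{W}G(i)$ with orbits the fibres of $f_*$ is then the usual monodromy action of $\pi_1(\mathrm{base},*)=\pi_0\Omega(\ho\Lim_i\bar{W}^2A(i),*)$ on $\pi_0$ of the fibre of $p$, transported along the isomorphism of the previous paragraph; that the orbits are exactly the fibres of $f_*$ is a standard property of fibrations. For the last assertion, fix $x\in\ho\Lim_i\bar{W}G(i)=p^{-1}(*)$ and put $y=f_*x$; rotating the fibre sequence, the homotopy fibre of $f_*$ over $y$ is weakly equivalent to $\Omega(\ho\Lim_i\bar{W}^2A(i),*)\simeq\ho\Lim_i\bar{W}A(i)$ (here $p(y)=*$, which is why the answer does not depend on $x$). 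The displayed long exact sequence is then the homotopy long exact sequence of the fibre sequence $\ho\Lim_i\bar{W}A(i)\ra\ho\Lim_i\bar{W}G(i)\xra{f_*}\ho\Lim_i\bar{W}H(i)$ based at $*$, $x$, $y$, with connecting maps identified with $\delta$ via $\pi_n(\ho\Lim_i\bar{W}^2A(i),*)\cong\pi_{n-1}\ho\Lim_i\bar{W}A(i)$, and with the terminal map $-*x$ the orbit map $a\mapsto a*x$ of the action just described.

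\textbf{Main obstacle.} No step is mathematically deep; the work lies entirely in bookkeeping basepoints. Proposition \ref{centralholim} only provides a homotopy fibre sequence in $\Ho(\bS)$, the spaces are not canonically pointed, and $\ho\Lim_i\bar{W}G(i)$ may have many components, so one must check that the monodromy action appearing in the $\pi_0$-statement agrees, after rotating the fibre sequence, with the map $-*x$ at the bottom of the long exact sequence, independently of the chosen lift $x$. This is precisely where the homotopy-abelian $H$-group structure on $\ho\Lim_i\bar{W}^2A(i)$ does its work: it makes every based loop space of the base canonically equivalent to $\ho\Lim_i\bar{W}A(i)$ and every component equivalent, so the standard fibration-sequence arguments (e.g.\ \cite{sht}) carry over verbatim and yield an answer not depending on $x$.
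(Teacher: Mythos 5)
Your proof is correct and follows essentially the same route as the paper: apply the long exact sequence of a fibration to the fibration model of $\delta\co\ho\Lim_i\bar{W}H(i)\to\ho\Lim_i\bar{W}^2A(i)$ supplied by Proposition \ref{centralholim}, and identify $\Omega\ho\Lim_i\bar{W}^2A(i)\simeq\ho\Lim_i\Omega\bar{W}^2A(i)\simeq\ho\Lim_i\bar{W}A(i)$ to read off the extra $\pi_0$-term and the shifted homotopy groups. The only difference is presentational — you spell out the monodromy action and the $H$-group structure on the base that make the basepoint bookkeeping painless, where the paper simply cites \cite[Lemma I.7.3]{sht}.
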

\begin{proof}
This is just the long exact sequence of a fibration  (\cite[Lemma I.7.3]{sht}) applied to $\delta\co \ho\Lim_{i \in I} Y'(i) \to \ho\Lim_{i \in I} \bar{W}^2A(i) $, and noting that 
\[
 \Omega \ho\Lim_{i \in I} \bar{W}^2A(i)\simeq  \ho\Lim_{i \in I} \Omega \bar{W}^2A(i)\simeq \ho\Lim_{i \in I} \bar{W}A(i),
\]
so 
\[
 \pi_n\ho\Lim_{i \in I} \bar{W}^2A(i)\cong  \pi_{n-1}\ho\Lim_{i \in I} \bar{W}A(i)
\]
for all $i>0$.
\end{proof}

\begin{remark}
 Were it not for the final term, Corollary \ref{obs1} would just be  the long exact sequence of homotopy for the map $\ho\Lim_{i \in I} \bar{W}G(i) \to \ho\Lim_{i \in I} \bar{W}H(i)$. The essential purpose of all our effort so far has thus been to incorporate the extra term $\pi_0\ho\Lim_{i \in I} \bar{W}^2A(i)$, giving an obstruction to lifting connected components.
\end{remark}

\begin{proposition}\label{abholim}
Given an abelian extension $G \to H$ of $I$-diagrams with kernel $A$,  there is a morphism $\delta\co \ho\Lim_{i \in I} \bar{W}H(i)  \to \ho\Lim_{i \in I}\bar{W}(H \ltimes \bar{W}A(i))$ in the homotopy category of simplicial sets over $\ho\Lim_{i \in I} \bar{W}H(i)$ with 
a homotopy pullback diagram
\[
\begin{CD}
 \ho\Lim_i  \bar{W}G(i)  @>>> \ho\Lim_i \bar{W}H(i) \\
@VVV @VV{\delta}V\\  
\ho\Lim_i \bar{W}H(i) @>0>>\ho\Lim_i \bar{W}(H(i) \ltimes \bar{W}A(i)). 
\end{CD}
\]

In particular, if the adjoint action of $H$ on $A$ factors through some quotient $\bar{H}$, then for any $\bar{y} \in \ho\Lim_i \bar{W}\bar{H}$, we have a fibration sequence
\[
 (\ho\Lim_i  \bar{W}G(i))_{\bar{y}} \to (\ho\Lim_i \bar{W}H(i))_{\bar{y}} \to \ho\Lim_i \bar{W}(\bar{H}(i) \ltimes \bar{W}A(i))_{\bar{y}}
\]
on homotopy fibres over $y$.
\end{proposition}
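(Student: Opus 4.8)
The plan is to feed the diagrams produced by the abelian case---Proposition~\ref{abcofibre} (or its groupoid version \ref{abcofibregpd})---into the homotopy-limit machinery exactly as in the proofs of Proposition~\ref{centralholim} and Corollary~\ref{obs1}, and then, for the second assertion, to perform one further base change along $H\to\bar H$.

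\emph{Part one.} Apply Proposition~\ref{abcofibre} levelwise to the extensions $G(i)\to H(i)$; the functoriality clause assembles the outputs into an $I$-diagram of fibrations $Y'(i)\to\bar W(H(i)\ltimes\bar WA(i))$, a levelwise weak equivalence $w\co Y'(i)\xra{\sim}\bar WH(i)$ covering the projection, and a levelwise pullback square with corners $\bar WG(i)$, $Y'(i)$, $\bar WH(i)$, $\bar W(H(i)\ltimes\bar WA(i))$ whose right-hand leg is that fibration. All four corners are Kan complexes, so a pullback with a fibrant leg is a homotopy pullback; this being levelwise and $\ho\Lim_I$ commuting with homotopy pullbacks (both are homotopy limits, cf.\ \cite[Ch.~18--19]{hirschhorn}), applying $\ho\Lim_I$ gives a homotopy pullback square in $\bS$. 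Replacing the corner $\ho\Lim_iY'(i)$ by $\ho\Lim_i\bar WH(i)$ via $\ho\Lim_iw$, setting $\delta$ to be the induced composite $\ho\Lim_i\bar WH(i)\xla{\sim}\ho\Lim_iY'(i)\to\ho\Lim_i\bar W(H(i)\ltimes\bar WA(i))$ in $\Ho(\bS)$---a morphism over $\ho\Lim_i\bar WH(i)$ because the underlying map of diagrams is---and taking the zero section $0$ for the other leg, yields the asserted homotopy pullback square.

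\emph{Part two.} The semidirect product uses only the $\bar H$-action on $A$, so $H\ltimes\bar WA\cong H\by_{\bar H}(\bar H\ltimes\bar WA)$ as simplicial groups, a strict pullback; as $\bar W$ preserves finite limits and $\bar W(\bar H\ltimes\bar WA)\to\bar W\bar H$ is a fibration, applying $\bar W$ and then $\ho\Lim_i$ gives a homotopy pullback
\[
\ho\Lim_i\bar W(H(i)\ltimes\bar WA(i))\ \simeq\ \ho\Lim_i\bar WH(i)\ \by^h_{\ho\Lim_i\bar W\bar H(i)}\ \ho\Lim_i\bar W(\bar H(i)\ltimes\bar WA(i)).
\]
Every object and map of the part-one square lies over $\ho\Lim_i\bar W\bar H(i)$ (via $H\to\bar H$, resp.\ $H\ltimes\bar WA\to\bar H$; here one uses that $\delta$ and $0$ are sections of the projection to $\ho\Lim_i\bar WH(i)$), so one may take homotopy fibres over $\ho\Lim_i\bar W\bar H(i)$ at $\bar y$ throughout. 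This operation preserves homotopy pullbacks, and the homotopy fibre of $\ho\Lim_i\bar W\bar H(i)$ over its own point $\bar y$ is contractible, so the displayed pullback degenerates to a product
\[
\bigl(\ho\Lim_i\bar W(H(i)\ltimes\bar WA(i))\bigr)_{\bar y}\ \simeq\ \bigl(\ho\Lim_i\bar WH(i)\bigr)_{\bar y}\ \by\ \ho\Lim_i\bar W(\bar H(i)\ltimes\bar WA(i))_{\bar y}.
\]
In this decomposition $\delta_{\bar y}$ has first component the identity and second component a map $\bar\delta_{\bar y}$, while $0_{\bar y}$ has first component the identity and second component the basepoint; hence the square of homotopy fibres identifies $(\ho\Lim_i\bar WG(i))_{\bar y}$ with the homotopy fibre of $\bar\delta_{\bar y}\co(\ho\Lim_i\bar WH(i))_{\bar y}\to\ho\Lim_i\bar W(\bar H(i)\ltimes\bar WA(i))_{\bar y}$, which is the fibration sequence claimed.

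\emph{The main obstacle} is not any single homotopical input---that right-derived limits preserve homotopy pullbacks, that a strict pullback with a fibrant leg is a homotopy pullback, and that the path fibration is contractible are all standard---but the bookkeeping: checking that $\delta$, $0$ and the comparison equivalences genuinely live over $\ho\Lim_i\bar WH(i)$, and hence over $\ho\Lim_i\bar W\bar H(i)$, so that passing to homotopy fibres in part two is legitimate, and verifying that $H\ltimes\bar WA\cong H\by_{\bar H}(\bar H\ltimes\bar WA)$ really becomes a homotopy pullback after applying $\bar W$ and is preserved by $\ho\Lim_i$.
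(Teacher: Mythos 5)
Your proof is correct and follows essentially the same route as the paper, which simply says ``apply the derived functor $\ho\Lim_I$ to the diagrams from Proposition~\ref{abcofibre}''; you have merely spelled out the standard homotopical bookkeeping, and in particular made explicit the argument for the second assertion (the base change along $H\to\bar H$ and the degeneration of the fibre of $H\ltimes\bar WA\to\bar H$ into a product), which the paper leaves implicit.
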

\begin{proof}
We just apply the derived functor $\ho\Lim_{i \in I}$ to the diagrams from Proposition  \ref{abcofibre}.
\end{proof}

Now, given an $I$-diagram $X$, write $\uleft{X}:= \ho\Lim_i  X(i)$. 

\begin{corollary}\label{obs2}
In the scenario of Proposition \ref{abholim},
an element $y$ lies in the image of
\[
\pi_0 \uleft{\bar{W}G} \xra{f_*} \pi_0\uleft{\bar{W}H}
\]  
if and only if $\delta_*(y)= 0 \in  \pi_0(\uleft{\bar{W}A}_{(y)} )$, where $\uleft{\bar{W}A}_{(y)}$ denotes the homotopy fibre of $ \uleft{\bar{W}(H \ltimes \bar{W}A)} \to \uleft{\bar{W}H}$ over $y$.

Moreover, for each such $y$  there is a transitive group action of $\pi_0(\uleft{\bar{W}A}_{(y)} )$ on the fibre of $f_*$.

For any $x \in  \uleft{\bar{W}G}$ with $y=f_*x$, the homotopy fibre of $f$ over $y$ is weakly equivalent  to $\uleft{\bar{W}A}_{(y)}$,
and the sequence above 
extends to a long exact sequence
\[
\xymatrix@R=0ex{
\cdots  \ar[r]^-{f_*}&\pi_n( \uleft{\bar{W}H} ,y) \ar[r]^-{\delta}&  \pi_{n-1}(\uleft{\bar{W}A}_{(y)}) \ar[r] &\pi_{n-1}( \uleft{\bar{W}G} ,x)\ar[r]^-{f_*}&\cdots\\ 
\cdots \ar[r]^-{f_*}&\pi_1( \uleft{\bar{W}H},y) \ar[r]^-{\delta}&  \pi_0(\uleft{\bar{W}A}_{(y)}) \ar[r]^-{-*x} &\pi_0 \uleft{\bar{W}G}.
}
\]
\end{corollary}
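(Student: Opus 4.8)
The plan is to run the argument of Corollary \ref{obs1} in the twisted setting, using the homotopy cartesian square of Proposition \ref{abholim} as the main input in place of the central-extension fibration there. Write $B:=\uleft{\bar{W}H}$ and $E:=\uleft{\bar{W}(H\ltimes\bar{W}A)}$, let $p\co E\to B$ be the projection and $\delta,0\co B\to E$ its two sections (the obstruction section of Proposition \ref{abholim} and the zero section). By that proposition $\uleft{\bar{W}G}\simeq B\by^h_E B$, the homotopy fibre product formed along $\delta$ and $0$, and the natural map $f_*$ is identified with a projection onto $B$ (the two projections agreeing up to homotopy, since $p\delta=p0=\id_B$). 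First I would compute the homotopy fibre of $f_*$ over a point $y\in B$: pasting the defining square of $\uleft{\bar{W}G}$ with the inclusion $\{y\}\hookrightarrow B$ gives $\mathrm{hofib}_y(f_*)\simeq\mathrm{hofib}_{0(y)}(\delta\co B\to E)$, and pushing a path from $\delta(b)$ to $0(y)$ down to $B$ via $p$ (a contractible choice of data, as $p\delta=\id_B$) identifies this with the space of paths in the fibre $E_y:=\uleft{\bar{W}(H\ltimes\bar{W}A)}_y$ from $\delta(y)$ to the distinguished point $0(y)$. Such a path exists if and only if $[\delta(y)]=[0(y)]=0$ in $\pi_0E_y$, i.e.\ $\delta_*(y)=0$; since $y$ lies in the image of $f_*$ on $\pi_0$ exactly when $\mathrm{hofib}_y(f_*)\ne\emptyset$, this gives the first assertion.

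Next I would identify $\mathrm{hofib}_y(f_*)$, when non-empty, with $\uleft{\bar{W}A}_{(y)}=\uleft{\bar{W}(H\ltimes A)}_y$. After choosing a path the path space above becomes $\Omega(E_y,0(y))$, so it suffices to show $\Omega(E_y,0(y))\simeq\uleft{\bar{W}(H\ltimes A)}_y$. Since $\ho\Lim_{i\in I}$ and ``homotopy fibre over $y$'' both commute with fibrewise loops taken about the zero section (all being homotopy limits), this reduces to the levelwise statement $\Omega_{\bar{W}H(i)}\bar{W}(H(i)\ltimes\bar{W}A(i))\simeq\bar{W}(H(i)\ltimes A(i))$ over $\bar{W}H(i)$. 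That in turn follows by applying $\bar{W}(H\ltimes-)$ to the path--loop fibration sequence $A\to WA\to\bar{W}A$ of $H$-modules: one obtains a fibration sequence $\bar{W}(H\ltimes A)\to\bar{W}(H\ltimes WA)\to\bar{W}(H\ltimes\bar{W}A)$ over $\bar{W}H$, and since $WA$ is contractible, $\bar{W}(H\ltimes WA)\to\bar{W}H$ is a weak equivalence under which the map to $\bar{W}(H\ltimes\bar{W}A)$ becomes the zero section; this exhibits $\bar{W}(H\ltimes A)$ as the fibrewise loop space of $\bar{W}(H\ltimes\bar{W}A)$ over $\bar{W}H$. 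In particular $\uleft{\bar{W}A}_{(y)}$ is itself a loop space, so its $\pi_0$ carries a group structure.

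With $\mathrm{hofib}_y(f_*)\simeq\uleft{\bar{W}A}_{(y)}$ in hand, the remaining assertions are formal. Picking $x\in\uleft{\bar{W}G}$ over $y$, the long exact sequence is that of the fibration $f_*\co\uleft{\bar{W}G}\to\uleft{\bar{W}H}$ based at $x$ (\cite[Lemma I.7.3]{sht}, exactly as in the proof of Corollary \ref{obs1}), with connecting homomorphism $\delta$ and fibre term identified as above; exactness at $\pi_0\uleft{\bar{W}G}$ together with the standard torsor structure on the set of components of the fibre of a fibration yields the transitive $\pi_0\uleft{\bar{W}A}_{(y)}$-action on the fibre of $f_*$ over $y$. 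I expect the only genuinely non-formal point to be the fibrewise delooping of the second paragraph --- checking that looping $\bar{W}(H\ltimes\bar{W}A)$ in the fibre direction returns $\bar{W}(H\ltimes A)$ compatibly with the $H$-action --- the rest being routine manipulation of homotopy pullbacks and of the long exact sequence of a fibration. (Alternatively, the equivalence $\mathrm{hofib}_y(f_*)\simeq\Omega(E_y,0(y))$ can be read off directly from the ``in particular'' clause of Proposition \ref{abholim} applied with $\bar{H}=H$, at the cost of tracking which basepoint of $E_y$ is in play.)
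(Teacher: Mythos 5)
Your proof is correct and is in essence the argument the paper has in mind; the paper's own proof is the single sentence ``The proof of Corollary \ref{obs1} carries over,'' so you are supplying exactly the details that terse phrase elides. The structural plan is the same in both: identify $\uleft{\bar{W}G}$ with the homotopy pullback from Proposition \ref{abholim}, recognise the fibre of $f_*$ over $y$ as (essentially) the fibrewise loop space of $\uleft{\bar{W}(H\ltimes\bar{W}A)}$ over $\uleft{\bar{W}H}$ at the zero section, and then run the long exact sequence of the fibration.

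The one step that genuinely has content beyond the central case is your second paragraph: the fibrewise delooping $\Omega_{\bar{W}H}\bar{W}(H\ltimes\bar{W}A)\simeq\bar{W}(H\ltimes A)$, which in the central case degenerates to the one-line observation $\Omega\bar{W}^2A\simeq\bar{W}A$ appearing explicitly in the paper's proof of Corollary \ref{obs1}. Your argument via $A\to WA\to\bar{W}A$ is sound, but the phrasing ``applying $\bar{W}(H\ltimes -)$ to the path--loop fibration gives a fibration sequence $\bar{W}(H\ltimes A)\to\bar{W}(H\ltimes WA)\to\bar{W}(H\ltimes\bar{W}A)$'' should be read carefully: $H\ltimes A\to H\ltimes WA\to H\ltimes\bar{W}A$ is \emph{not} a short exact sequence of simplicial groups (the kernel of the second map is $\{1\}\ltimes A$, not $H\ltimes A$), so the conclusion is not a direct consequence of $\bar{W}$ preserving group extensions. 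Rather, $\bar{W}(H\ltimes WA)\to\bar{W}(H\ltimes\bar{W}A)$ is a Kan fibration with fibre $\bar{W}A$, and pulling it back along the zero section $\bar{W}H\to\bar{W}(H\ltimes\bar{W}A)$ yields a fibration over $\bar{W}H$ with fibre $\bar{W}A$ and monodromy given by the $H$-action, which is precisely $\bar{W}(H\ltimes A)$; since $\bar{W}(H\ltimes WA)\simeq\bar{W}H$, this identifies $\bar{W}(H\ltimes A)$ as the fibrewise homotopy fibre of the zero section, i.e.\ the fibrewise loop space you need. With that clarification your proof is complete, and your parenthetical observation that the identification can also be read off from the ``in particular'' clause of Proposition \ref{abholim} with $\bar{H}=H$ is a legitimate shortcut.
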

\begin{proof}
The proof of Corollary \ref{obs1} carries over. 
\end{proof}

\section{Towers of Diophantine obstructions}\label{rationalsn}
 
Recall that we are fixing a number field $F$, and a (possibly infinite) non-empty set $\Sigma$ of finite places of $F$. 
When $\Sigma$ consists of all finite places, we 
have a weak equivalence $B G_{F} \simeq (\Spec F)_{\et}$; in general,  \cite[Appendix  A, Equation (1)]{cesnaviciusPT} combines with \cite[Corollary 6.5]{fried} to show  that the homotopy fibre of the surjective map $(\Spec \sO_{F,\Sigma})_{\et}\to B G_{F,\Sigma} $ becomes contractible on derived pro-$\Sigma$ completion,  
i.e. pro-$L$ completion (cf. \cite[Theorems 3.4 and 4.3]{arma}) for $L$ the set of integer primes  all of whose $F$-prime factors lie in  $\Sigma$.

%

Given any profinite group $\Pi$ and a pro-surjection $\Pi \to G_{F,\Sigma}$ (such as when $\Pi$ is the arithmetic fundamental group of an $\sO_{F,\Sigma}$-scheme), we have a fibration $B \Pi \to B G_{F,\Sigma}$ of pro-simplicial sets in the model structure of \cite{isaksen}. 

Thus for any pro-simplicial set $Y$ over $BG_{F,\Sigma}$, we may 
consider the mapping space
\[
 \map_{B G_{F,\Sigma}}(Y, B\Pi)
\]
for the same model structure;
 when $Y= BG_{F,\Sigma}$, this is the space of sections of $ B\Pi \to BG_{F,\Sigma}$. 

Explicitly, \cite[Proposition 10.9]{isaksen} allows us to describe mapping spaces of pro-simplicial sets in terms of  the Edwards--Hastings  strict model structure \cite{EdwardsHastings}, reducing to the following description.
\begin{definition}\label{mapdef1}
 For pro-simplicial sets $X=\Lim_i X(i)$, $Y=\Lim_j Y(j)$ such that each $Y(j)$ has finitely many non-zero homotopy groups, we may define the simplicial set  $ \map(X, Y)$  in terms of  mapping spaces of simplicial sets as the homotopy limit
\[
 \map(X, Y):= \ho\Lim_j \LLim_i \map(X(i),Y(j)).
\]

For general pro-simplicial sets $X=\Lim_i X(i)$, $Y=\Lim_j Y(j)$, we may define $ \map(X, Y)$ as the homotopy limit
\[
 \map(X, Y):= \ho\Lim_{j,k} \LLim_i \map(X(i),P_kY(j)),
\]
where $P_k$ denotes a Postnikov tower.

For a diagram $X \xra{f} Z \la Y$ of pro-simplicial sets, the relative mapping space $\map_{Z}(X,Y)$ is the homotopy fibre of  $\map(X,Y) \to \map(X,Z)$ over $f$. 
\end{definition}

\subsection{Abelian extensions}\label{abext}
Assume that we have abelian extension $\Pi''\to \Pi'$ of profinite groups with kernel $A$, such that the conjugation action of $\Pi'$ on $A$ factors through some quotient $G$ of $\Pi'$. When working with nilpotent completions of geometric fundamental groups, we may take $G= G_{F,\Sigma}$, but for relative completions (as needed for modular curves), $G$ will be larger.

Writing $  B(G \ltimes BA):= \bar{W}(G \ltimes BA)$, we have:
\begin{proposition}\label{abprop}
 In the scenario above, and for any pro-simplicial set $Y$ over $BG$, there is a natural fibration sequence
\[
 \map_{B G}(Y,B\Pi'') \to  \map_{B G}(Y,B\Pi') \to \map_{BG }(Y, B(G \ltimes BA))
\]
of mapping spaces,  the fibre being taken over the zero map $Y \to BG \to B(G \ltimes BA)$. 
\end{proposition}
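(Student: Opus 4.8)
\emph{The plan} is to apply the relative mapping-space functor $\map_{BG}(Y,-)$ to the homotopy fibre sequence of pro-simplicial sets produced by Proposition \ref{abcofibregpd}, and then to use the hypothesis that the conjugation action of $\Pi'$ on $A$ factors through $G$ in order to rewrite the base of that sequence over $BG$. Since $\map_{BG}(Y,-)$ is assembled from homotopy limits (Definition \ref{mapdef1}), it preserves homotopy pullbacks and homotopy fibre sequences, and $\map_{BG}(Y,BG)\simeq\ast$; these, together with the splitting of a homotopy pullback of the form $U\xra{(\id,\phi)}U\times V\xla{(\id,v_0)}U$ as the homotopy fibre of $\phi$ over $v_0$, are the only formal facts used.

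\emph{Step one.} Write the profinite abelian extension $\Pi''\to\Pi'$ (with kernel $A$) as a cofiltered system of finite-level abelian extensions and apply Proposition \ref{abcofibregpd} functorially and levelwise; this yields, in $\pro(\bS)$ over $BG$, a fibration $p\co Y'\to B(\Pi'\ltimes BA)$, a weak equivalence $w\co Y'\xra{\sim}B\Pi'$, and an isomorphism $Y'\times_{B(\Pi'\ltimes BA)}B\Pi'\cong B\Pi''$ over the zero section $0'$, so that $B\Pi''$ is the homotopy fibre over $0'$ of the map $\delta'\co B\Pi'\to B(\Pi'\ltimes BA)$ represented by $p\circ w^{-1}$. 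For the structural fibration $r\co B(\Pi'\ltimes BA)\to B\Pi'$ one has $r\circ\delta'\simeq\id$ and $r\circ 0'=\id$. Because $\Pi'$ acts on $A$ through $G$ we have $\Pi'\ltimes A\cong\Pi'\times_G(G\ltimes A)$, hence a homotopy-cartesian square exhibiting $B(\Pi'\ltimes BA)\simeq B\Pi'\times^h_{BG}B(G\ltimes BA)$ with first projection $r$ and second projection a map $q\co B(\Pi'\ltimes BA)\to B(G\ltimes BA)$; set $\delta:=q\circ\delta'\co B\Pi'\to B(G\ltimes BA)$, and note that $q\circ 0'$ is the zero map $B\Pi'\to BG\to B(G\ltimes BA)$.

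\emph{Step two and conclusion.} Applying $\map_{BG}(Y,-)$ to the homotopy fibre sequence of Step one gives $\map_{BG}(Y,B\Pi'')\simeq\map_{BG}(Y,B\Pi')\times^h_{\map_{BG}(Y,B(\Pi'\ltimes BA))}\map_{BG}(Y,B\Pi')$ along $\delta'_{\ast}$ and $0'_{\ast}$. The identification $B(\Pi'\ltimes BA)\simeq B\Pi'\times^h_{BG}B(G\ltimes BA)$ together with $\map_{BG}(Y,BG)\simeq\ast$ yields $\map_{BG}(Y,B(\Pi'\ltimes BA))\simeq\map_{BG}(Y,B\Pi')\times\map_{BG}(Y,B(G\ltimes BA))$, under which $\delta'_{\ast}$ becomes $\phi\mapsto(\phi,\delta_{\ast}\phi)$ and $0'_{\ast}$ becomes $\phi\mapsto(\phi,c)$ with $c$ the zero map, which is constant since it factors through $BG$. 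Contracting the contractible path-space factor identifies this homotopy pullback with the homotopy fibre of $\delta_{\ast}\co\map_{BG}(Y,B\Pi')\to\map_{BG}(Y,B(G\ltimes BA))$ over the zero map --- this is the claimed fibration sequence --- and naturality in $Y$ and in the extension is inherited from the functoriality used throughout. (This is the mapping-space analogue of the fibre-sequence form of Proposition \ref{abholim}.)

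\emph{Main obstacle.} The homotopy-pullback manipulations above are formal; I expect the real work to be pro-categorical and model-categorical hygiene: checking that the functorial output of Proposition \ref{abcofibregpd} assembles into a genuine homotopy fibre sequence in Isaksen's model structure on $\pro(\bS)$ after passing to the cofiltered system; that $B\Pi''$, $B\Pi'$ and $B(G\ltimes BA)$ are all isomorphic in $\pro(\bS)$ to the limits of their Postnikov towers, as the standing hypothesis requires (immediate for the first two, a brief check for the third since $A$ is profinite abelian); and that $\map_{BG}(Y,-)$, defined by the homotopy limit of Definition \ref{mapdef1}, genuinely commutes with the homotopy limits invoked, i.e.\ that relative mapping spaces of pro-simplicial sets send homotopy fibre sequences to homotopy fibre sequences.
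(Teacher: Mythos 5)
Your proof is correct, and in fact follows the \emph{informal} sketch that the paper itself gives at the start of its proof, rather than the ``more formal'' argument that follows it. The paper's formal route is to express $\map_{BG}(Y,B\Pi)$ as a homotopy limit $\ho\Lim_{(n,j)\in\Delta\times J}\Hom_{\pro(\Set)}(Y_n,B\Pi(j))$, and then apply Proposition~\ref{abholim} directly to the $(\Delta\times J)$-diagram of groups $\Hom_{\pro(\Set)}(Y_n,\Pi''(j))\to\Hom_{\pro(\Set)}(Y_n,\Pi'(j))$, taking homotopy fibres over the basepoint of $\map(Y,BG)$ at the end. You instead build the homotopy fibre sequence of pro-spaces $B\Pi''\to B\Pi'\to B(G\ltimes BA)$ first --- exhibiting $B\Pi''$ as the homotopy fibre over the zero map via the identification $\Pi'\ltimes A\cong\Pi'\times_G(G\ltimes A)$ --- and then apply $\map_{BG}(Y,-)$, using that it preserves homotopy pullbacks. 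The two routes are equivalent: Proposition~\ref{abholim} is itself obtained by applying a derived limit to Proposition~\ref{abcofibre}, so you have effectively re-partitioned the homotopy limit, doing the $j\in J$ part before taking mapping spaces and letting $\map_{BG}(Y,-)$ absorb the $\Delta$-part. What your route buys is a cleaner conceptual picture in which the classifying map $\delta\colon B\Pi'\to B(G\ltimes BA)$ is made explicit and visibly factors the obstruction through $G$; what the paper's route buys is that the `` factors through $\bar{H}$'' bookkeeping and the passage to fibres over a basepoint are handled once and for all inside Proposition~\ref{abholim}, so no separate argument for the splitting $B(\Pi'\ltimes BA)\simeq B\Pi'\times^h_{BG}B(G\ltimes BA)$ is needed.

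Two small notational points. First, since $\Pi'',\Pi',A$ are (profinite) groups rather than groupoids with varying objects, Proposition~\ref{abcofibre} (the simplicial-group version, with base $\bar{W}[H\ltimes\bar{W}A]$, which for constant simplicial groups is $B(\Pi'\ltimes BA)$) is the more natural citation than Proposition~\ref{abcofibregpd}; your stated output $B(\Pi'\ltimes BA)$ is the one that comes out of \ref{abcofibre}, so this is a citation mismatch rather than a mathematical error. Second, the concerns you list under ``main obstacle'' (Postnikov-completeness of $B(G\ltimes BA)$, compatibility of the cofiltered passage with Isaksen's model structure, commutation of $\map_{BG}(Y,-)$ with the relevant homotopy limits) are all genuine points of hygiene and are resolvable; the paper finesses them by performing the homotopy limit over $J$ at the level of finite groups before ever forming a mapping space, which sidesteps most of them.
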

\begin{proof}
The idea behind this statement is that  the  extension  $\Pi''\to \Pi'$  defines an element of $\H^1(\Pi',A)$, which we can write as a morphism $\ob\co \Pi' \to G \ltimes BA$ in the homotopy category of simplicial profinite groups over $ G$. The proof of \cite[Proposition 1.19]{ddt1} adapts to any Artinian category, and in particular to finite groups, allowing us to regard  simplicial profinite groups as pro-objects in the category of  (bounded) finite simplicial groups. 
We can then recover $B\Pi''$ as a homotopy fibre product 
\[
 B\Pi' \by^h_{\ob, B(G \ltimes BA)}B\Pi',
\]
leading to the fibration sequence above. 

More formally,  we write $\Pi''= \Lim_{j\in J} \Pi''(j)$ as a filtered limit of finite quotient groups, inducing compatible expressions $ A= \Lim_j A(j)$,  $\Pi'(j)= \Pi''(j)/A(j)$ and $\Pi'(j) \onto G(j)$ with $G= \Lim_j G(j)$.

The mapping spaces   $\map(Y,B\Pi)$ are given by 
\[
 \map_{B G}(Y,B\Pi)\simeq \ho\Lim_{(n,j) \in \Delta \by J} \Hom_{\pro(\Set)}(Y_n, B\Pi(j)),
\]
so we apply  Proposition \ref{abholim} to the abelian extension
\[
  \Hom_{\pro(\Set)}(Y_n, \Pi''(j)) \to  \Hom_{\pro(\Set)}(Y_n, \Pi'(j))
\]
of $(\Delta \by J)$-diagrams in groups, and then take homotopy fibres over the canonical basepoint of  $\map(Y,BG)$.
\end{proof}

We think of the base $\map_{B G}(Y, B(G \ltimes BA))$ of the fibration as an obstruction space; via the description of Definition \ref{mapdef1}, its  homotopy groups   are given by equivariant cohomology groups
\[
 \pi_i\map_{B G}(Y, B(G \ltimes BA)) \cong \H^{2-i}_{G}(Y, A),
\]
so we have an  exact sequence 
\begin{align*}
  0 &\to \H^0_{G}(Y,A)\to \pi_1\map_{B G}(Y,B\Pi'')  \to \pi_1\map_{B G}(Y,B\Pi') \\
&\to \H^1_{G}(Y,A) \to \pi_0\map_{B G}(Y,B\Pi'') \to  \pi_0\map_{B G}(Y,B\Pi') \to \H^2_{G}(Y,A)
\end{align*}
In particular, the obstruction to lifting a homotopy class of maps $Y \to B\Pi'$ to $B\Pi''$ lies in $ \H^2_{G}(Y,A)$, and the ambiguity in this lift is given by an action of $\H^1_{G}(Y,A) $ on the fibres.


\begin{remark}\label{abproprmk}
%
 Given an abelian extension $\Pi''\to \Pi'$ of 
pro-simplicial groups
with kernel $A$, such that the conjugation action of $\Pi'$ on $A$ factors through some quotient $G$ of $\Pi'$, 
there is a natural fibration sequence
\[
 \map_{\bar{W} G}(Y,\bar{W}\Pi'') \to  \map_{\bar{W} G}(Y,\bar{W}\Pi') \to \map_{\bar{W}G }(Y, \bar{W}(G \ltimes \bar{W}A))
\]
of mapping spaces, for any pro-space $Y$ over $\bar{W}G$.
\end{remark}

\begin{example}\label{obsexample}
In order to understand the first obstruction map  $\ob\co \pi_0\map_{B G}(Y,B\Pi') \to \H^2_{G}(Y,A)$ explicitly, consider the case when $Y$ is reduced and connected, so $Y_0=*$ and an element of  $\pi_0\map_{B G}(Y,B\Pi')$ is a conjugacy class of pro-group homomorphisms $\alpha \co \pi_1(Y) \to \Pi'$ over $G$. Here, $\pi_1Y$ is a pro-group with generators $Y_1$ and relations $\pd_1y = \pd_0y \cdot\pd_2y$ for  $y \in Y_2$.  
 Since $\Pi'' \to \Pi'$ is surjective, we may lift $\alpha$ to a morphism $\tilde{\alpha} \co  Y_1 \to \Pi''$ of pro-sets. The obstruction $\ob(\alpha)$ then measures the failure of $\tilde{\alpha}$ to be a group homomorphism, in the form of the $2$-cocycle 
\[
(y \in Y_2) \mapsto \tilde{\alpha}(\pd_2y) \tilde{\alpha}(\pd_1y)^{-1}\tilde{\alpha}(\pd_0y).
\]
\end{example}

\subsection{Nilpotent obstruction towers}\label{nilpsn} 

We can of course iterate the construction of Remark \ref{abproprmk}, by considering towers  $\ldots \Pi_{n+1} \to \Pi_n \to \ldots \to \Pi_0 = G$ of surjections whose kernels are abelian $G$-representations. The motivating examples are given by the quotients of $\pi_1^{\et}(X)$ by the lower central series of  $\pi_1^{\et}(\bar{X})$, and by their pro-$p$ completions relative to $G_{F,\Sigma}$ for $p \in \Sigma$.

Writing $A_n$ for the kernel of $\Pi_n \to \Pi_{n-1}$ and $\Pi_{\infty}:= \Lim_n \Pi_n$, we then have an exact couple 
\[
 \xymatrix{
\ldots   \ar[r] & \pi_*\map_{B G}(Y,B\Pi_n)  \ar[r] &  \pi_*\map_{B G}(Y,B\Pi_{n-1}) \ar[r] \ar@{-->}[ld]^{\delta} & \ldots \ar@{-->}[ld]^{\delta} \ar[r] & \pi_*\map_{B G}(Y,B\Pi_1) 
\\
 &		\H^{1-*}_{G}(Y,A_n)  \ar[u]  & \H^{1-*}_{G}(Y,A_{n-1}) \ar[u]& \ldots 
& \H^{1-*}_{G}(Y,A_{1}) \ar[u]^{\simeq} 
 }
\]
similar to that  in \cite[\S VI.2]{sht}, but with the extra final terms $ \H^2_{G}(Y,A_{n}) $. Here, the connecting homomorphism $\delta$ is of homological degree $-1$, so we have
\[
 \delta \co  \pi_i\map_{B G}(Y,B\Pi_{n-1})\to \H^{2-i}_{G}(Y,A_n).
\]

This induces a non-abelian spectral sequence 
\[
 E_1^{s,t}= \H^{1+s-t}_{G}(Y, A_s) \abuts  \pi_{t-s}\map_{B G}(Y,B\Pi_{\infty} )
\]
of groups and sets,
where the terms $E_1^{s,t}$ are only defined for $t\ge \max(s-1,0)$, and the indexing convention follows  \cite[\S VI.2]{sht}, with $d_r\co E_r^{s,t}\to E_r^{s+r,t+r-1}$. Unlike the fringed Bousfield--Kan spectral sequence of  \cite[\S VI.2]{sht}, we have  terms $E_r^{t+1,t}$ ensuring that we can recover the images of 
\[
 \pi_0\map_{B G}(Y,B\Pi_{\infty} )\to  \pi_0\map_{B G}(Y,B\Pi_s )
\]
from our spectral sequence. 

Explicitly,  writing 
\[
\pi_i M_s^{(r)} := \im(\pi_i\map_{B G}(Y,B\Pi_{s+r})\to \pi_i\map_{B G}(Y,B\Pi_{s})),  
\]
 there are  long exact sequences
\begin{align*}
 \ldots \to  E_r^{s-r+1, t-r+2}\to \pi_{t-s+1}M_{s-r+1}^{(r-1)}
\to &\pi_{t-s+1}M_{s-r}^{(r-1)}\\
  &\to E_r^{s,t} \to \pi_{t-s}M_s^{(r-1)}  \to \pi_{t-s}M_{s-1}^{(r-1)} \to \ldots 
\end{align*}
(as in \cite[Lemma VI.2.8]{sht}, but with  extra final terms $\pi_0 M_{t+1-r}^{(r-1)}  \to E_r^{t+1,t}$).

The first page just corresponds to the  exact sequences
\begin{align*}
0 \to \H^{0}_{G}(Y, A_s) \to \pi_{1}M_{s}^{(0)} \to  \pi_{1}M_{s-1}^{(0)}  \to \\
   \H^{1}_{G}(Y, A_s) \to \pi_{0}M_{s}^{(0)} \to  \pi_{0}M_{s-1}^{(0)}  \to  \H^{2}_{G}(Y, A_s).
\end{align*}


\begin{example}[Nilpotent completion of $\pi_1^{\et}(\bar{X})$]\label{nilpex1}
If $X$ is a scheme over $F$, and $\bar{X}:=X\ten_{F}\bar{F}$, with some geometric point $\bar{x}$, then the simplest examples are given by taking lower central series 
\[
 \Pi_n:= \pi_1^{\et}(X,\bar{x})/[\pi_1^{\et}(\bar{X}, \bar{x})]_{n+1},
\]
where for a profinite group $\pi$  we 
define $[\pi]_{k+1}$ inductively to be the closure of  $[\pi, [\pi]_k]$, with $[\pi]_1:=\pi$.

Thus $\Pi_0=G_{F}$, and taking $Y=BG_{F}$, we get  the non-abelian spectral sequence 
\[
 E_1^{s,t}= \H^{1+s-t}(G_{F}, [\bar{\pi}]_s/[\bar{\pi}]_{s+1} ) \abuts  \pi_{t-s} \map_{BG_{F}}(BG_{F}, B\Pi_{\infty} )
\]
of groups and sets, where we write $\bar{\pi}:= \pi_1^{\et}(\bar{X}, \bar{x})$. If $\bar{x}$ lies over a point in $X(F)$, then $\Pi_{\infty}$ is just the semi-direct product of $G_{F}$ and the  pro-nilpotent completion of $\pi_1^{\et}(\bar{X}, \bar{x})$.

Since  points in $X(F)$ map to elements in $\pi_0 \map_{BG_{F}}(BG_{F}, B\Pi_{\infty} )$, this spectral sequence gives  obstructions to the existence of such rational points. The same constructions work when $X$ is a Deligne--Mumford stack instead of a scheme, in which case we have a morphism from the groupoid $X(\sO_{F})$ to the fundamental groupoid $\pi_f \map_{BG_{F}}(BG_{F}, B\Pi_{\infty} )$.

The maps   $d_r \co E^{1,1}_r \to E_r^{r+1,r}$ are just Ellenberg's obstructions, which can be described in terms of Massey products as in Wickelgren's thesis \cite{wickelgrenLCS}.

Another variant is given by taking a smooth scheme $X$  over $\sO_{F,\Sigma}$ admitting a smooth relative compactification, and setting $\bar{X}:=X\ten_{\sO_{F,\Sigma}}\sO_{\bar{F},\Sigma}$, with some geometric point $\bar{x}$. For a prime $\ell$ all of whose $F$-prime factors lie in $\Sigma$, we can  consider 
the relative pro-$\ell$ completion (or more generally pro-nilpotent pro-$L$ for a set $L$ of such primes) of $\pi_1^{\et}(X,\bar{x}) $ over $G_{F,\Sigma}$ in the sense of \cite{hainmatrelative}, which will have the effect of replacing $[\bar{\pi}]_s/[\bar{\pi}]_{s+1}$ with $\ell$-torsion groups --- the corresponding maps are described in \cite{wickelgrenNilpotent}. 

Alternatively, if we replaced $BG_{F}$ with the \'etale homotopy type of an $F$-scheme $Z$, we would instead obtain topological obstructions to the existence of a map $Z \to X$ over $F$.
\end{example}


\begin{example}[Relative completion of $\pi_1^{\et}(\bar{X})$ --- descent obstructions]\label{descentex1}
When the geometric fundamental group of $X$ is perfect, its nilpotent completion is trivial, so the construction of Example \ref{nilpex1} gives no information. However, we can remedy this by taking the completion relative to a larger group than $G_{F}$. We may take any quotient $P$ of $\pi_1^{\et}(X,\bar{x})$ bigger than $G_{F}$, then  write $K:= \ker (\pi_1^{\et}(X, \bar{x})\to P)$, and set  $\Pi_n:= \pi_1^{\et}(X,\bar{x})/[K]_{n+1}$.

This gives a non-abelian spectral sequence
\[
 E_1^{s,t}= \begin{cases}
             \H^{1+s-t}(G_{F}, [K]_s/[K]_{s+1} ) & s \ge 1\\
\pi_t\map_{BG_{F}}(BG_{F}, BP) & s=0
            \end{cases}
  \abuts  \pi_{t-s} \map_{BG_{F}}(BG_{F}, B\Pi_{\infty} )
\]
of groups and sets. Here, the Galois action on $[K]_s/[K]_{s+1}$ depends on the relevant section $\sigma\in\pi_0 \map_{BG_{F}}(BG_{F}, BP)$.

When $P$ is a finite extension of $G_{F}$, each section $\sigma$ as above gives a finite \'etale group scheme $P^{\sigma}
$ 
over $F$ with $P^{\sigma}(\bar{F})\cong \ker(P \to G_{F})$, and hence
$BP^{\sigma}$ having \'etale homotopy type $BP$. Even when $P$ is not a finite extension of $G_{F} $, we can write it as a filtered limit $\Lim_{\alpha} P_{\alpha}$ of such finite extensions, with each section $\sigma$ giving a pro-(finite \'etale) group scheme $P^{\sigma}= \Lim_{\alpha} P^{\sigma}_{\alpha}$ over $F$. Maps $X_{\et} \to BP$  then correspond to  $P^{\sigma}$-torsors $f^{\sigma} \co Y^{\sigma} \to X$, and we may substitute  $K \cong \pi_1(\bar{Y}^{\sigma}$  in the spectral sequence above. 
\end{example}

\begin{example}[Relative completion of $\pi_1^{\et}(\bar{Y}_{\Gamma})$]\label{modularex1}
As a special case of Example \ref{descentex1}, take a  congruence subgroup $\Gamma \le {\SL_2(\Z)}$; we may then form a stacky modular curve $Y_{\Gamma}$   over  a  number field $F$. The geometric fundamental group $\pi_1^{\et}(Y_{\Gamma}, \bar{x})$ is  the profinite completion $\hat{\Gamma}$ of $\Gamma$, so a point $x \in Y_{\Gamma}(F)$  gives an isomorphism   $\pi_1^{\et}(Y_{\Gamma}, \bar{x})\cong \Gamma \rtimes G_{F}$. 
The Tate module of the universal elliptic curve over $Y_{\Gamma}$ gives rise to a $\hat{\Z}$-local system of rank $2$ on $Y_{\Gamma}$, and hence a map
\[
  \pi_1^{\et}(Y_{\Gamma}) \to \GL_2(\hat{\Z})                                                                                                                                                                                                                                                                                                                                                                                                                                                                                                                                                                                                                                                                                                                                                                                \]
(for any choice of basepoint).

Since the local system has determinant $\hat{\Z}(1)$, this induces a map
\[
 \pi_1^{\et}(Y_{\Gamma}) \to \GL_2(\hat{\Z}) \by_{\bG_m(\hat{\Z})}G_{F},
\]
and we may then take the relative pro-nilpotent completion over the image, 
or the relative pro-$\ell$ completion over the image in $\GL_2(\Zl) \by_{\bG_m(\Zl)}G_{F} $. Since the maps $\GL_2(\Zl) \to \GL_2(\bFl)$ are pro-$\ell$ extensions, completion relative to $\GL_2(\bFl)$ gives the same limit from a different tower.

For $\Gamma = \SL_2(\Z)$, with $\Gamma(N):= \ker({\SL_2(\Z)}\to \SL_2(\Z/N))$, the  spectral sequence resulting from the pro-nilpotent tower relative to $\GL_2(\Z/N) \by_{\bG_m(\Z/N)}G_{F}$ is
\[
 \H^{1+s-t}(G_{F}, \widehat{[\Gamma(N)]_s/[\Gamma(N)]_{s+1}} ) \abuts  \pi_{t-s} \map_{B(\GL_2(\Z/N) \by_{\bG_m(\Z/N)}G_{F})}(BG_{F}, B\Pi_{\infty} ),
\]
where $\Pi_{\infty}:= \Lim_s \pi_1^{\et}(Y_{\Gamma})/[\hat{\Gamma}(N)]_s$. 

The spectral sequence relative to $\GL_2(\hat{\Z}) \by_{\bG_m(\hat{\Z})}G_{F}$ instead has
\[
 \H^{1+s-t}(G_{F}, \Lim_N\widehat{[\Gamma(N!)]_s/[\Gamma(N!)]_{s+1}} ) \abuts  \pi_{t-s} \map_{B(\GL_2(\hat{\Z}) \by_{\bG_m(\hat{\Z})}G_F)}(BG_{F}, B\Pi_{\infty}),
\]
for $\Pi_{\infty}= \Lim_{s,N} \pi_1^{\et}(Y_{\Gamma})/[\hat{\Gamma}(N!)]_s $.

\end{example}

\subsection{Unipotent extensions}\label{unipsn}

We now look to  consider towers  $\ldots \Pi_{n+1} \to \Pi_n \to \ldots \to \Pi_0$ 
of  unipotent extensions of Lie  groups  over $\Ql$, where all $F$-prime factors of $\ell$ lie in of our set $\Sigma$ of places of $F$.

\begin{definition} 
 Say that a simplicial group $H$ is bounded if its Dold--Kan normalisation $NH$ (given by  $N_nH = H_n \cap \bigcap_{i>0} \ker \pd_i$) is so.
\end{definition}

\begin{lemma}\label{admisslemma}
 If $U$ is a bounded simplicial unipotent algebraic group over $\Ql$, equipped with a  continuous action of a profinite group $G$, then $U(\Ql)$ is the filtered colimit of its bounded simplicial profinite $G$-equivariant subgroups. 
\end{lemma}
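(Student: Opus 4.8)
The strategy is to prove directly that the bounded simplicial profinite $G$-equivariant subgroups of $U(\Ql)$ form a filtered poset under inclusion whose union is all of $U(\Ql)$; since a filtered colimit of simplicial subgroups is their union, this is exactly the assertion. Two observations make this manageable. First, boundedness is automatic for \emph{every} simplicial subgroup $\Gamma\subseteq U(\Ql)$, since $N_n\Gamma=\Gamma_n\cap N_nU$ vanishes as soon as $N_nU$ does. Secondly, each structure map $\theta^*\co U_n(\Ql)\to U_m(\Ql)$ (for $\theta\in\Delta([m],[n])$) is a morphism of algebraic groups, hence continuous, and $G$ acts continuously by hypothesis; so the levelwise topological closure of a simplicial subgroup of $U(\Ql)$ is again a simplicial subgroup, still $G$-equivariant, and it will suffice to construct the required subgroups and then close them levelwise.

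The one genuinely $p$-adic input is the following: \emph{for a unipotent algebraic group $V$ over $\Ql$, the closed subgroup of $V(\Ql)$ generated by any compact subset $K$ is compact.} Fix an embedding $V\into\GL_N$ with image in the upper unitriangular matrices, so that $V(\Ql)\subseteq 1+\fn(\Ql)$ with $\fn$ the strictly upper triangular matrices and $\fn^N=0$. Since $K$ has bounded matrix entries, $K\subseteq 1+\ell^{-M}\fn(\Zl)$ for some $M$. By nilpotence, the $\Zl$-subalgebra $\cA\subseteq\Mat_N(\Ql)$ generated by $\ell^{-M}\fn(\Zl)$ is contained in $\Zl\cdot 1+\sum_{k=1}^{N-1}\ell^{-kM}\fn(\Zl)^k$, a finitely generated $\Zl$-module and hence compact; since $\cA$ is closed under multiplication and the inverse of $1+n$ (with $n$ strictly upper triangular) is the finite sum $\sum_{j<N}(-n)^j$, the subgroup of $V(\Ql)$ generated by $K$ lies in $\cA$ and therefore has compact closure. (One may instead argue via the logarithm and the Baker--Campbell--Hausdorff formula, nilpotence of $\Lie(V)$ bounding the denominators that occur.) This claim carries the real content of the lemma; the remaining steps are bookkeeping.

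It remains to run the two bookkeeping steps. \emph{Covering:} given $x\in U_n(\Ql)$, let $\Gamma$ be the levelwise closure of the simplicial subgroup of $U(\Ql)$ generated by the orbit $G\cdot x$. For each $m$, the level-$m$ part of that simplicial subgroup is the subgroup generated by $\bigcup_{\theta\in\Delta([m],[n])}\theta^*(G\cdot x)$, a finite union of continuous images of the compact group $G$, hence a finite union of compacta; so $\Gamma_m$ is the closed subgroup generated by a compact set, hence compact by the claim. Thus $\Gamma$ is a bounded simplicial profinite $G$-equivariant subgroup with $x\in\Gamma_n$. \emph{Filteredness:} given two such subgroups $\Gamma,\Gamma'$, let $\Gamma''$ be the levelwise closure of the simplicial subgroup of $U(\Ql)$ generated by $\Gamma$ and $\Gamma'$ together; its level-$m$ part is the subgroup generated by the compact set $\Gamma_m\cup\Gamma'_m$, so $\Gamma''_m$ is compact by the claim, and it is evidently $G$-stable. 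Hence $\Gamma''$ is again of the required kind and contains both $\Gamma$ and $\Gamma'$. The bounded simplicial profinite $G$-equivariant subgroups of $U(\Ql)$ thus form a filtered poset whose union, by the covering step, is $U(\Ql)$; therefore $U(\Ql)=\LLim_\Gamma\Gamma$, as claimed.
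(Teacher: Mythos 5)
Your proof is correct, but it takes a genuinely different route from the paper's. The paper works on the Lie algebra $\fu$ of $U$: it invokes ``standard arguments'' to produce a $G$-stable, bounded, finite-rank $\Zl$-lattice $\Lambda\subseteq\fu$ with $\Lambda\ten\Ql\onto\fu$, uses nilpotence to show that the closure $g(\Lambda)$ of $\Lambda$ under the Campbell--Baker--Hausdorff monomials is still a bounded finite-rank lattice (now a group for the CBH product), and then exhibits the chain $g(\Lambda)\subseteq g(\ell^{-1}\Lambda)\subseteq g(\ell^{-2}\Lambda)\subseteq\cdots$ as an explicit cofinal linearly ordered family. You bypass the Lie algebra entirely and work directly in the group via a unitriangular matrix embedding, proving the (formally stronger) statement that the whole poset of bounded simplicial profinite $G$-equivariant subgroups is filtered with union $U(\Ql)$. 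Both arguments derive their force from the same source --- nilpotence bounds the $\ell$-adic denominators that arise under the group law --- but the paper's version via $\log/\exp$ and CBH is structurally cleaner, more clearly $G$-equivariant by construction, and yields a canonical cofinal $\N$-indexed tower without having to verify filteredness, while yours is more elementary and self-contained (not relying on the unstated lattice argument) at the cost of a separate filteredness check. Your preliminary observations --- that boundedness is automatic for any simplicial subgroup of a bounded $U$, and that levelwise closure preserves simplicial $G$-equivariant subgroups because the structure maps are algebraic hence continuous --- are correct and organize the argument efficiently.
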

\begin{proof}
 This is a slight generalisation of \cite[Lemmas \ref{weiln-admissprop} and \ref{weiln-admissiblelattice}]{weiln}, which address the case where the $G$-action is semisimple. Standard arguments give a $G$-equivariant bounded simplicial $\Zl$-submodule $\Lambda$ of the Lie algebra $\fu$ of $U$, with $\Lambda$ of finite rank and $\Lambda\ten \Ql \onto \fu$. The closure $g(\Lambda)$ of $\Lambda$ under monomial operations in the Campbell--Baker--Hausdorff product is still bounded and of finite rank, as $\fu$ is nilpotent, and the groups $g(\ell^{-n}\Lambda)$ realise $U(\Ql)$ as a filtered colimit of the required form.
\end{proof}

\begin{corollary}\label{admisscor}
Take an affine algebraic group  $T$ over $\Ql$ and a surjection $\Pi \to T$ of simplicial affine group schemes, with   $U:= \ker(\Pi \to R)$ bounded unipotent. Then for any Zariski-dense profinite group $G \subset T(\Ql)$, the simplicial topological group
\[
 \Pi(\Ql)\by_{T(\Ql)}G
\]
is a filtered colimit of those simplicial profinite subgroups which are bounded nilpotent extensions of $G$.
\end{corollary}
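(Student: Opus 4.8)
The plan is to reduce the statement to Lemma~\ref{admisslemma} by means of the conjugation action of $\Pi$ on its normal subgroup $U$. Since $U$ is normal in $\Pi$, the quotient $T=\Pi/U$ acts on $U$, and restricting this action along $G\subset T(\Ql)$ gives a continuous action of the profinite group $G$ on the bounded simplicial unipotent group $U$ (continuity because the action of $T$ on each $U_n$ is algebraic). Lemma~\ref{admisslemma} then writes $U(\Ql)=\LLim_\alpha U_\alpha$ as the filtered union of its bounded simplicial profinite $G$-equivariant subgroups; I would also keep in view the concrete exhausting chain $g(\ell^{-n}\Lambda)$ produced in the proof of that lemma, which has the property that every compact subset of $U(\Ql)$ lies in one of its members. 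Next I would record that $\Pi(\Ql)\to T(\Ql)$ is surjective in each simplicial degree, since $\H^1_{\mathrm{fppf}}(\Ql,U_n)=0$ for $U_n$ unipotent over a field of characteristic $0$ (by d\'evissage to $\bG_a$). Hence $E:=\Pi(\Ql)\by_{T(\Ql)}G$ fits into a degreewise exact sequence $1\to U(\Ql)\to E\xra{p}G\to 1$ of simplicial groups, with $G$ the constant simplicial group.

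The second step is to produce a simplicial set-theoretic section of $p$ with a well-controlled cocycle. In degree $0$ the map $\Pi_0\to T_0$ is a torsor under the unipotent $\Ql$-group $U_0$ over the affine scheme $T_0$, hence trivial (indeed $\H^1_{\Zar}(T_0,U_0)=0$), so it admits an algebraic section; after normalising and restricting to $G$ this yields a continuous section $\sigma_0\co G\to E_0$ of $p_0$ with $\sigma_0(e)=e$. Because $G$ is a \emph{constant} simplicial set, $\sigma_0$ extends in a unique way to a simplicial section $\sigma\co G\to E$, necessarily $\sigma_n=s_0^{\,n}\sigma_0$; the simplicial identities make this well-defined and a section of $p$, and the associated cocycle $c(g,h)=\sigma(g)\sigma(h)\sigma(gh)^{-1}\in U(\Ql)$ is then ``totally degenerate'', $c_n=s_0^{\,n}c_0$, with $c_0\co G\by G\to U_0(\Ql)$ continuous and hence of compact image.

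The final step is to build the profinite subgroups. For each $\alpha$ with $U_{\alpha,0}\supseteq c_0(G\by G)$ --- a cofinal set of indices, since $c_0(G\by G)$ is compact and is therefore contained in some member of the exhausting chain above --- set $E_\alpha\subseteq E$ to be, degreewise, $E_{\alpha,n}:=\sigma_n(G)\cdot U_{\alpha,n}$. Using the $G$-equivariance of $U_\alpha$ (to conjugate $U_{\alpha,n}$ by the elements $\sigma_n(g)$) together with $c_n=s_0^{\,n}c_0\in U_{\alpha,n}$ (so that products of the $\sigma_n(g)$ stay within $\sigma_n(G)U_{\alpha,n}$), one checks that $E_\alpha$ is a sub-simplicial-group of $E$ in which $U_\alpha$ is normal with $E_\alpha/U_\alpha\cong G$; it is profinite in each degree (an extension of the profinite group $G$ by the profinite group $U_{\alpha,n}$, split as a topological space by the continuous $\sigma_n$), it is bounded because $U_\alpha$ is, and it is a nilpotent extension of $G$ because $U_\alpha\subseteq U(\Ql)$ is degreewise nilpotent, $U$ being unipotent. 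Finally $E=\bigcup_\alpha E_\alpha$: given $x\in E_n$, write $x=\sigma_n(p(x))\cdot u$ with $u\in U_n(\Ql)=\bigcup_\alpha U_{\alpha,n}$, so $x\in E_{\alpha,n}$ for a suitable $\alpha$; and directedness of $\{E_\alpha\}$ is inherited from that of $\{U_\alpha\}$. This exhibits $E$ as the required filtered colimit.

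The hard part is the bookkeeping underlying the second and third steps: one must arrange a \emph{single} $U_\alpha$ to contain the whole extension cocycle in every simplicial degree, so that $\sigma(G)\cdot U_\alpha$ is a genuine simplicial subgroup and not merely a degreewise one. This is exactly where the compactness of $G$, the degeneracy of the cocycle forced by $G$ being constant, and the exhausting chain of Lemma~\ref{admisslemma} (whose members swallow compact subsets) come together; everything else is routine once the $G$-action has been identified and Lemma~\ref{admisslemma} has been applied.
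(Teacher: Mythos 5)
The first sentence of your argument is where it breaks down: it is not true that $T=\Pi/U$ acts on $U$. The conjugation action of $\Pi$ on its normal subgroup $U$ gives a map $\Pi\to\Aut(U)$ whose restriction to $U$ is the inner action, so it descends to $T$ only when $U$ acts trivially on itself, i.e.\ when $U$ is abelian. In the generality of the corollary $U$ is an arbitrary bounded unipotent kernel, and the extension $1\to U\to\Pi\to T\to 1$ need not split, so there is no honest $T$-action (and hence no $G$-action) on $U$ to which Lemma~\ref{admisslemma} could be applied. The ``action'' you use later, $g\cdot u:=\sigma_n(g)u\sigma_n(g)^{-1}$, is well-defined once the section $\sigma$ is chosen, but it is not a group action unless the cocycle $c$ is central: $\sigma(g)\sigma(h)u\sigma(h)^{-1}\sigma(g)^{-1}$ and $\sigma(gh)u\sigma(gh)^{-1}$ differ by conjugation by $c(g,h)$. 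So the subgroups $U_\alpha$ produced by Lemma~\ref{admisslemma} (which requires a genuine continuous action of a profinite group) are simply not available, and without them the key step that makes $E_\alpha=\sigma(G)\cdot U_\alpha$ into a simplicial subgroup collapses. Equivalently, one would need a $\Zl$-lattice in $\fu$ stable under $\Ad(\sigma(G))$, but $\Ad(\sigma(G))$ is only a compact \emph{subset} of $\Aut(\fu)(\Ql)$, not contained in any obvious compact subgroup, so no stable lattice is guaranteed.

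This is precisely the difficulty the paper's proof is engineered to avoid: it first reduces to $T$ reductive by passing to $T^{\mathrm{red}}$ (the fibre trick in the first sentence of the paper's proof), and for reductive $T$ the Levi decomposition of the simplicial unipotent extension furnishes a genuine group-theoretic section, hence an isomorphism $\Pi\cong T\ltimes U$ and a genuine (and semisimple) $G$-action on $U$ to which Lemma~\ref{admisslemma} applies. Your cocycle bookkeeping in steps two and three is correct and would go through once one has $G$-equivariant $U_\alpha$'s in hand (and indeed it nicely handles the fact that a Levi section need not restrict to a \emph{continuous} homomorphism on $G$ without some care), but some version of the reduction to a split extension is needed before the argument can start.
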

\begin{proof}
Since $\Pi(\Ql)\by_{T(\Ql)}G$ is the fibre of $ \Pi(\Ql)\by_{T^{\red}(\Ql)}G \to T(\Ql)\by_{T^{\red}(\Ql)}G $, it suffices to prove this for $T$ reductive. As in \cite{htpy}, the simplicial unipotent extension $\Pi \to T$ then admits a section (i.e. a Levi decomposition), unique up to conjugation by $U(\Ql)$; this gives an isomorphism $\Pi\cong T \ltimes U$. Since $G$ is Zariski dense in the reductive group $T$, its action is semisimple so we may appeal to Lemma \ref{admisslemma}, writing 
\[
 G\by_{T(\Ql)}\Pi(\Ql)\cong  G\ltimes U \cong \LLim_{\alpha} G \ltimes N_{\alpha},
\]
for bounded $G$-equivariant simplicial profinite subgroups $N_{\alpha}$ of $U$.
\end{proof}

The nerve $\bar{W}(\Pi(\Ql)\by_{T(\Ql)}G)$ is then an ind-pro-simplicial set, and defining mapping spaces for these by the usual convention
\[
 \map(Y,\{Z_{\alpha}\}):= \LLim_{\alpha} \map(Y, Z_{\alpha}), 
\]
for  $Y, Z_{\alpha}$ profinite, we may  apply Proposition \ref{abprop} to unipotent extensions, by passing to filtered colimits:

 \begin{proposition}\label{abpropunip}
 Take a unipotent extension $\Pi''\to \Pi'$ of algebraic groups over $\Ql$ with commutative kernel $A$, such that the conjugation action of $\Pi'$ on $A$ factors through some quotient $\Pi$ of $\Pi'$. Then for any Zariski-dense map $G \to \Pi(\Ql)$ with $G$ profinite,  and for any pro-simplicial set $Y$ over $BG$, there is a natural fibration sequence
 \[
  \map_{B G}(Y,B(\Pi''\by_{\Pi}G)) \to  \map_{B G}(Y,B(\Pi'\by_{\Pi}G)) \to \map_{BG }(Y, B(G \ltimes BA))
 \]
 of mapping spaces,  the fibre being taken over the zero map $Y \to BG \to B(G \ltimes BA)$. 
 \end{proposition}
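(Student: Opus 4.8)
The plan is to bootstrap from the profinite case, Proposition \ref{abprop} (in its evident extension to simplicial profinite groups, cf. Remark \ref{abproprmk}), by a filtered-colimit argument. First I would note that $f\co \Pi''\by_\Pi G \to \Pi'\by_\Pi G$ is an abelian extension of simplicial topological groups with kernel $A(\Ql)$, and that the conjugation action of $\Pi'\by_\Pi G$ on $A(\Ql)$ factors through its projection to $G$ (since the $\Pi'$-action on $A$ already factors through $\Pi$, and the two resulting maps $\Pi'\by_\Pi G \to \Pi(\Ql)$ agree). Thus $A(\Ql)$ is a continuous $\Ql[G]$-module and $G$ plays the role of the ``acting quotient'' required by Proposition \ref{abprop}.

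The next step is to produce compatible filtered-colimit presentations of the two extension groups by simplicial profinite subgroups. Since $\ker(\Pi'\to\Pi)$ is bounded unipotent (automatic in the towers of \S\ref{unipsn}, where $\Pi$ is a quotient beneath which the tower is built), $\ker(\Pi''\to\Pi)$ is also bounded unipotent, being an extension of it by the commutative unipotent group $A$. Corollary \ref{admisscor}, applied to the surjections $\Pi''\to\Pi$ and $\Pi'\to\Pi$, then writes
\[
\Pi''\by_\Pi G = \LLim_\alpha G_\alpha, \qquad \Pi'\by_\Pi G = \LLim_\alpha \bar G_\alpha
\]
as filtered colimits of simplicial profinite subgroups, each a bounded nilpotent extension of $G$. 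Passing to a cofinal subsystem, I can arrange $\bar G_\alpha$ to be the image of $G_\alpha$ in $\Pi'\by_\Pi G$, so that $A_\alpha := G_\alpha\cap A(\Ql) = \ker(G_\alpha\to\bar G_\alpha)$ is a bounded profinite $\Ql[G]$-submodule of $A(\Ql)$; choosing the subgroups $G$-equivariantly as in Lemma \ref{admisslemma}, one has $A(\Ql) = \LLim_\alpha A_\alpha$ (the $A_\alpha$ exhaust $A(\Ql)$ because the $G_\alpha$ exhaust $\Pi''\by_\Pi G\supseteq A(\Ql)$). This presents $f$ as the filtered colimit of the abelian extensions $G_\alpha\to\bar G_\alpha$ of simplicial profinite groups with kernel $A_\alpha$, the $\bar G_\alpha$-action on $A_\alpha$ factoring through $G$.

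I would then apply Proposition \ref{abprop} to each $G_\alpha\to\bar G_\alpha$, obtaining natural fibration sequences
\[
\map_{BG}(Y,BG_\alpha) \to \map_{BG}(Y,B\bar G_\alpha) \to \map_{BG}(Y,B(G\ltimes BA_\alpha))
\]
with fibre over the zero map $Y\to BG\to B(G\ltimes BA_\alpha)$, and pass to the filtered colimit over $\alpha$. Here one uses that $\bar W$, the semidirect-product construction, and hence $B=\bar W$, all commute with filtered colimits; that, with the convention $\map(Y,\LLim_\alpha Z_\alpha):=\LLim_\alpha\map(Y,Z_\alpha)$, taking $\map(Y,-)$ and then the homotopy fibre over the fixed map to $\map(Y,BG)$ commutes with filtered colimits; and that filtered colimits of simplicial sets are exact and preserve Kan fibrations (horns and simplices being finite), hence preserve fibration sequences and the basepoint conditions. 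The colimit of the displayed sequences is therefore
\[
\map_{BG}(Y,B(\Pi''\by_\Pi G)) \to \map_{BG}(Y,B(\Pi'\by_\Pi G)) \to \map_{BG}(Y,B(G\ltimes BA)),
\]
a fibration sequence with fibre over the zero map, and naturality is inherited from Proposition \ref{abprop} and Corollary \ref{admisscor}.

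The main obstacle is the bookkeeping in the second paragraph: simultaneously realising $\Pi''\by_\Pi G$ and $\Pi'\by_\Pi G$ as filtered colimits of profinite groups in a way compatible both with the extension and with the $G$-actions on the kernels, so that Proposition \ref{abprop} applies uniformly across the system. This is precisely what Corollary \ref{admisscor} and Lemma \ref{admisslemma} are built to deliver (via a Levi splitting after reducing to reductive $\Pi$); the residual task is to check that the choices can be made functorially in the data. Everything else is the routine verification that the homotopy-theoretic constructions involved commute with filtered colimits.
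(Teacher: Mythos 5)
Your proposal matches the paper's intended argument: the paper gives no separate proof of Proposition \ref{abpropunip}, merely the remark just before it that ``we may apply Proposition \ref{abprop} to unipotent extensions, by passing to filtered colimits,'' after introducing the ind-pro-simplicial mapping-space convention. You have spelled out exactly that colimit argument — using Lemma \ref{admisslemma} and Corollary \ref{admisscor} to realise $\Pi''\by_\Pi G \to \Pi'\by_\Pi G$ compatibly as a filtered colimit of abelian extensions of simplicial profinite groups, applying Proposition \ref{abprop}/Remark \ref{abproprmk} levelwise, and noting that $\bar W$, $\map_{BG}(Y,-)$ under the stated convention, and fibration sequences all commute with filtered colimits — so the proposal is correct and essentially the same approach.
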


\begin{example}[Unipotent completion of $\pi_1^{\et}(\bar{X})$]\label{unipex1}
If $X$ is a smooth scheme over $\sO_{F,\Sigma}$ admitting a smooth relative compactification, and $\bar{X}:=X\ten_{\sO_{F,\Sigma}}\sO_{\bar{F},\Sigma}$, with some geometric point $\bar{x}$, then we may use Proposition \ref{abpropunip} to give a variant of Example \ref{nilpex1}. For simplicity, assume that we have a point $x \in X(\sO_{F,\Sigma})$ under $\bar{x}$ (if not, we can recover analogues of  the constructions below by taking a $G_{F,\Sigma}$-equivariant  set $\cB \subset X(\sO_{\bar{F},\Sigma})$, then consider the $G_{F,\Sigma}$-equivariant surjection from the groupoid $ \pi_1^{\et}(\bar{X},\cB)$ to the contractible groupoid on objects $\cB$).

Now,  \cite{friedlanderfib} shows that  $\bar{X}_{\et}$ has  equivalent derived pro-$\Sigma$  completion to the homotopy fibre of $X_{\et} \to (\Spec  \sO_{F,\Sigma})_{\et}$. Since the section $x$ splits the long exact  sequence of homotopy groups, this gives an isomorphism between
the relative pro-$\Sigma$ completion of $ \pi_1^{\et}(X,\bar{x})$  over $G_{F,\Sigma}$ and the semi-direct product $ G_{F,\Sigma} \ltimes \pi_1^{\et}(\bar{X}, \bar{x})^{\wedge_{\Sigma}}$.  We then consider the lower central series 
\[
 \Pi_n:= G_{F,\Sigma} \ltimes  (\pi_1^{\et}(\bar{X},\bar{x})\ten \Ql)/[\pi_1^{\et}(\bar{X}, \bar{x})\ten\Ql]_n,
\]
of the pro-unipotent Malcev completion $\pi_1^{\et}(\bar{X},\bar{x})\ten \Ql$.

Thus $\Pi_0=G_{F,\Sigma}$, and taking $Y=BG_{F,\Sigma}$ we get  a non-abelian spectral sequence 
\[
 E_1^{s,t}= \H^{1+s-t}(G_{F,\Sigma}, [\bar{\pi}\ten \Ql]_s/[\bar{\pi}\ten \Ql]_{s+1} ) \abuts  \pi_{t-s} \map_{BG_{F,\Sigma}}(BG_{F,\Sigma}, B\Pi_{\infty} )
\]
of groups and sets, where we write $\bar{\pi}:= \pi_1^{\et}(\bar{X}, \bar{x})$. 

Although this gives weaker obstructions than Example \ref{nilpex1}, the obstruction spaces are easier to calculate. The vector spaces $[\bar{\pi}\ten \Ql]_s/[\bar{\pi}\ten \Ql]_{s+1}$ are the graded pieces of a pro-nilpotent Lie algebra with generators $\H_1(\bar{X},\Ql)$ and relations non-canonically isomorphic to $\H_2(\bar{X},\Ql)$. 
Since  points in $X(\sO_{F,\Sigma})$ map to elements in $\pi_0 \map_{BG_{F,\Sigma}}(BG_{F,\Sigma}, B\Pi_{\infty} )$, this spectral sequence gives  obstructions to the existence of such rational points. 
\end{example}

\subsection{Pro-unipotent extensions}\label{prounipsn}

Relative Malcev completion was introduced by Hain in \cite{hainrelative} for discrete groups, and as in \cite{weight1} we consider the natural generalisation to profinite groups  as follows:
\begin{definition}
Given a topological group $\Gamma$, a reductive pro-algebraic group $R$ over $\Ql$, and a Zariski-dense continuous representation $\rho\co \Gamma \to R(\Ql)$, 
   Define the Malcev completion $(\Gamma)^{\rho,\mal}$  
to be the universal diagram
\[
\Gamma \to \Gamma^{\rho,\mal}(\Ql) \xra{p} R(\Ql),
\]
with $p\co \Gamma^{\rho,\mal} \xra{p} R$  a pro-unipotent extension, and the composition equal to $\rho$.

When the representation $\rho$ is clear from the context, we will write $\Gamma^{R,\mal}:=\Gamma^{\rho,\mal}$.
\end{definition}

\begin{remark}\label{relmalpropsrmk}
 The pro-unipotent radical $\Ru(\Gamma,\rho)^{\mal}$ is then given by $\exp(\fu)$ for a pro-(finite-dimensional nilpotent) Lie algebra $\fu$. For $O(R)$ the ring of algebraic functions on $R$ over $\Ql$, equipped with its left $R$-action, the abelianisation of $\fu$ is  dual to the continuous cohomology $\H^1(\Gamma, O(R))$, and there is a presentation of $\fu$ with relations dual to $\H^2(\Gamma, O(R))$. In particular, if $\H^2(\Gamma, O(R))=0$, then there are canonical isomorphisms
\[
 [\Ru(\Gamma,\rho)^{\mal}]_n/[\Ru(\Gamma,\rho)^{\mal}]_{n+1} \cong (\CoLie_n \H^1(\Gamma, O(R)))^{*},
\]
where  $\CoLie_n(V) = \Lie(n)^{*}\ten_{S_n}V^{\ten n}$ for the Lie operad $\Lie$. Explicitly, when $V$ is finite-dimensional, $(\CoLie_n V)^{*}$ is the subspace of the free Lie algebra on generators $V^{*}$ consisting of homogeneous terms of bracket length $n$. 

Also note that if $\Gamma$ is a discrete group and $\hat{\Gamma}$ its profinite completion, then for any representation $\rho$ of $\hat{\Gamma}$, the map $\Gamma^{\rho,\mal}\to \hat{\Gamma}^{\rho,\mal}$ is necessarily an isomorphism.
\end{remark}

\begin{examples}[$\widehat{\SL_2(\Z)}$]\label{SL2ex1}
Our main motivating example is to take $\Gamma= {\SL_2(\Z)}$ and its  profinite completion 
$\hat{\Gamma}$, with $R= \SL_2$ (regarded as a group scheme over $\Ql$) and $ \hat{\Gamma} \to \SL_2(\Ql)$ the natural map. 

Since the ring $O(\SL_2)$ of functions is given by $\bigoplus_m V_m\ten (UV_m)^{*}$, for $V_m$ the irreducible $\SL_2$-representation of dimension $m+1$ over $\Ql$ and $UV_m$ the underlying vector space, we have
\[
\H^*(\Gamma, O(\SL_2)) \cong \bigoplus_m \H^*(\Gamma, V_m)\ten V_m^{*}.
\]
Thus $\H^2(\Gamma, O(\SL_2))=0$, and  Eichler--Shimura gives a description of $\H^1(\Gamma, O(\SL_2))\ten \Cx$ in terms of the decomposition of $\H^1(\Gamma,V_m)\ten \Cx$ into  modular forms and cusp forms of weight $m+2$ and level $1$. 

Our groups of interest are $\H^1(\Gamma,V_m)$ 
We may think of the spaces $\H^1(\Gamma,V_m)$ as $\ell$-adic realisations of motives of modular forms, as in \cite{deligneFormesModulairesladiques}. These $\Ql$-vector spaces admit $G_{\Q}$-actions via the interpretation as summands of $\H^{m+1}_{\et}(\cM_{1,m+1}\ten\bar{\Q}, \Ql)(m)$, interpreting $\cM_{1,m+1}$ as the $m$-fold product of the universal elliptic curve $\cM_{1,2}$ over the moduli stack $\cM_{1,1}$ of elliptic curves (the Tate twists arise because we wish to regard $V_1$ as a Tate module rather than its dual).




More generally, we can take $\Gamma$ to be a congruence subgroup of ${\SL_2(\Z)}$, giving a similar expression involving modular forms of higher levels, but with relations coming from $\H^2(\Gamma, O(\SL_2))$ whenever it is non-zero.

Alternatively, we can look at the relative Malcev completion of the canonical morphism
\[
 {\SL_2(\Z)}  \to \SL_2(\hat{\Z}) \by \SL_2(\Ql)=:R,
\]
where we regard the profinite group $\SL_2(\hat{\Z})$ as an affine group scheme over $\Ql$. Then we still have $\H^2({\SL_2(\Z)}, O(R))\ten \Q=0$, and Leray--Serre gives
\[
 \H^*({\SL_2(\Z)},O(\SL_2(\hat{\Z}))\ten V) \cong \LLim_N \H^*(\Gamma(N!), V),
\]
 so 
\[
 \H^1({\SL_2(\Z)}, O(R))\cong \bigoplus_m \LLim_N \H^1(\Gamma(N!), V_m)\ten V_m^{*},
\]
giving generators of $\Ru \widehat{\SL_2(\Z)}^{(\SL_2(\hat{\Z}) \by \SL_2), \mal}=\Ru {\SL_2(\Z)}^{(\SL_2(\hat{\Z}) \by \SL_2), \mal}$ in terms of modular and cusp forms of all weights and levels.

We can also just look at the relative Malcev completion of the canonical morphism
$
 {\SL_2(\Z)}  \to \SL_2(\hat{\Z})
$,
again regarding $\SL_2(\hat{\Z})$ as an affine group scheme over $\Ql$. We then have 
\[
 \H^1({\SL_2(\Z)}, O(\SL_2(\hat{\Z})))\cong  \LLim_N \H^1(\Gamma(N!),\Ql)
\]
with the corresponding $\H^2$ vanishing, giving generators for $\Ru {\SL_2(\Z)}^{\SL_2(\hat{\Z}), \mal}$ in terms of modular and cusp forms of  weight $2$ and all levels.
\end{examples}

For our purposes, Proposition \ref{abprop} is now not quite general enough, as our group schemes might not be of finite type. Consider an affine group scheme $T$ over $\Ql$ and a surjection $\Pi \to T$ of simplicial affine group schemes, with   $U:= \ker(\Pi \to R)$ bounded pro-unipotent, together with a Zariski-dense profinite group $G \subset T(\Ql)$. We can then canonically write the morphism $\Pi \to T$ as a filtered limit of unipotent extensions $\Pi_a \to T_a$ of affine algebraic groups, with Corollary \ref{admisscor} giving that 
$
 \Pi_a(\Ql)\by_{T_a(\Ql)}G
$
is an ind-profinite group, so $\Pi(\Ql)\by_{T(\Ql)}G$ is naturally a pro-ind-profinite group.

The nerve $\bar{W}(\Pi(\Ql)\by_{T(\Ql)}G)$ is then a  pro-ind-pro-simplicial set, and defining mapping spaces for these by the usual convention
\[
 \map(Y,\{Z_a\}):= \ho\Lim_{a} \map(Y, Z_{a}), 
\]
for  $Y$ profinite and $Z_{a}$ ind-profinite,   Proposition \ref{abpropunip} extends verbatim to pro-unipotent extensions $\Pi''\to \Pi'$.

\begin{example}[Modular forms of level $1$]\label{modularex2}
 If $X= \cM_{1,1}$ is the stacky modular curve over $\sO_{F,\Sigma}$, and $x \in X(\sO_{F,\Sigma})$, then  the Tate module gives a surjective homomorphism $\pi_1^{\et}(\bar{X}, \bar{x}) \to \SL_2(\Zl)$ whose relative pro-$\ell$ completion (or equivalently that over  $\SL_2(\bFl)$) is the same as that of $\SL_2(\Z)$. 
In particular, there is  a natural action of $G_{F,\Sigma}$ on  the relative Malcev completion $\SL_2(\Z)^{\SL_2, \mal} \cong \pi_1^{\et}(\bar{X}, \bar{x})^{\SL_2, \mal}$, and  we may consider the pro-unipotent extension
\[
 G_{F,\Sigma} \ltimes( \SL_2(\Z)^{\SL_2, \mal}\by_{\SL_2(\Ql)}\SL_2(\Zl)) \to  G_{F,\Sigma} \ltimes \SL_2(\Zl),
\]
setting
\[
 \Pi_n :=  G_{F,\Sigma} \ltimes( (\SL_2(\Z)^{\SL_2, \mal}/[\Ru]_{n+1})\by_{\SL_2(\Ql)}\SL_2(\Zl)).
\]

As in Example \ref{modularex1}, for the representation $G_{F,\Sigma} \to \Zl^*$ given by the Tate motive $\Zl(1)$, we have $G_{F,\Sigma} \ltimes \SL_2(\Zl) \cong G_{F,\Sigma}\by_{\Zl^*}\GL_2(\Zl)$, so a section of the projection $G_{F,\Sigma} \ltimes \SL_2(\Zl) \to G_{F,\Sigma}$ is equivalent to giving a $G_{F,\Sigma}$-representation $\Lambda$ of rank $2$ over $\Zl$, with determinant $\Zl(1)$. 

For the universal elliptic curve $f\co E \to X$, we have the Tate module $\bT_{\ell}:= (\oR^1 f_*\Zl)^*\cong \oR^1 f_*\Zl(1)$, a lisse $\Zl$-sheaf of rank 
 $2$ on $X$, giving a $G_{F,\Sigma}$-action on $\H^1(\SL_2(\Z),V_m)$  by identifying it with $\oR^1q_*(S^m\bT_{\ell})\ten \Q$, for the structure morphism $q\co X\to \Spec \sO_{F,\Sigma}$.

Write 
\begin{align*}
  L_s&:= \CoLie_s (\bigoplus_m \H^1(\SL_2(\Z), V_m) \ten S^m(\Lambda)^{*}),\\
&\cong \CoLie_s (\bigoplus_m \H^1(\SL_2(\Z), V_m) \ten S^m(\Lambda)(-m)).
\end{align*}
Adapting Example \ref{nilpex1}, the pro-unipotent generalisation of Proposition \ref{abpropunip} then combines with Examples \ref{SL2ex1} to  give a non-abelian spectral sequence 
\[
 E_1^{s,t}= \H^{1+s-t}(G_{F,\Sigma}, L_s^{*}) \abuts  \pi_{t-s} \map_{B(G_{F,\Sigma}\by_{\bG_m(\Zl)} \GL_2(\Zl))}(BG_{F,\Sigma}, B\Pi_{\infty} ),
\]
where the map $G_{F,\Sigma} \to \GL_2(\Zl)$ is given by $\Lambda$. Note that $\H^1(\SL_2(\Z),V_m)(-m)$ is mixed of weights $m+1$ (cusp forms and their conjugates) and $2m+2$ (Eisenstein series), and that $S^m\Lambda_{\Q}$ is pure of weight $-m$. Thus
$\H^1(\SL_2(\Z),V_m) \ten S^m(\Lambda)(-m)$ is mixed of weights $1$  and $m+2$, so $L_s$ is of strictly positive weights, and $E^1_{s,s+1}=0$. 

Now set $X_{(n)}:=\map_{B G_{F,\Sigma}}(BG_{F,\Sigma},B\Pi_n)$; thus $X_{(0)}$ consists of  representations $G_{F,\Sigma} \to \GL_2(\Zl)$ whose determinant is the Tate motive, conjugation by $\SL_2(\Zl)$ giving  equivalences, so $\pi_1(X_{(0)}, [\Lambda])$ consists of elements of $\SL_2(\Zl)$ commuting with the action of $G_{F,\Sigma}$ on $\Lambda$.  Since  $\pi_iX_{(n)}=0$ for $i>1$, we then have exact sequences
 \begin{align*}
  0 \to  \pi_1 X_{(n)} \to \pi_1 X_{(n-1)} \to
 \H^1(F, L_n^{*}) \to   \pi_0 X_{(n)} \to \pi_0 X_{(n-1)} \to \H^2(F, L_n^{*}),
 \end{align*}
with a map $X(\sO_{F,\Sigma}) \to  X_{(\infty)}$. Here, $X(\sO_{F,\Sigma})$ is the nerve of the groupoid of maps $\Spec \sO_{F,\Sigma} \to X$, so $\pi_0X(\sO_{F,\Sigma})$ is the set of isomorphism classes of elliptic curves over $\sO_{F,\Sigma}$, and $\pi_1(X(\sO_{F,\Sigma}),x)$ the group of automorphisms of the elliptic curve $E_x$ over $\sO_{F,\Sigma}$; the higher homotopy groups all vanish.

In other words, given a $G_{F,\Sigma}$-representation $\Lambda$ of rank $2$ over $\Zl$, with determinant $\Zl(1)$, these sequences give a tower of obstructions to lifting $\Lambda$ to an elliptic curve over $\sO_{F,\Sigma}$ with Tate module $\Lambda$, and characterise the ambiguity of the lift at each stage. 
 As in Examples \ref{SL2ex1}, there is an entirely similar treatment for profinite completions of congruence subgroups $\Gamma \le {\SL_2(\Z)}$, replacing $\cM_{1,1} $ with the modular curve $Y_{\Gamma}$.
\end{example}

\begin{example}[Modular forms of all levels]\label{modularex2b}
Again taking $X= \cM_{1,1}$ to be the stacky modular curve over a number field $F$ and $x \in X(F)$,  we may consider the pro-unipotent extension
\[
 G_{F} \ltimes( \SL_2(\Z)^{\SL_2 \by \SL_2(\hat{\Z}), \mal}\by_{\SL_2(\Ql)\by \SL_2(\hat{\Z})}\SL_2(\hat{\Z})) \to  G_{F} \ltimes \SL_2(\hat{\Z}),
\]
setting
\[
 \Pi_n :=  G_{F} \ltimes( (\SL_2(\Z)^{\SL_2\by \SL_2(\hat{\Z}), \mal}/[\Ru]_{n+1})\by_{\SL_2(\Ql)\by \SL_2(\hat{\Z})} \SL_2(\hat{\Z})).
\]

Choose a section of the projection $G_{F} \ltimes \SL_2(\hat{\Z}) \to G_{F}$; this  is equivalent to giving a $G_{F}$-representation $\Lambda$ of rank $2$ over $\hat{\Z}$, with determinant $\hat{\Z}(1)$. 
Write 
\[
M_s:= \CoLie_s ( \bigoplus_m \LLim_N \H^1(\Gamma(N!),V_m) \ten_{\hat{\Z}} S^m_{\hat{\Z}}(\Lambda)(-m)).
\] 
 As in Example \ref{modularex2}, we then have a non-abelian spectral sequence 
\[
 E_1^{s,t}= \H^{1+s-t}(G_{F}, M_s^{*}) \abuts  \pi_{t-s} \map_{B(G_{F}\by_{\bG_m(\hat{\Z})} \GL_2(\hat{\Z}))}(BG_{F}, B\Pi_{\infty} ),
\]
where the map $G_{F} \to \GL_2(\hat{\Z})$ is given by $\Lambda$. Since 
\[
O(\SL_2(\hat{\Z}))\ten\H^1(\Gamma(N!),V_m) \ten S^m(\Lambda)(-m)
\] 
is mixed of weights $1$ (cusp forms of all levels and their conjugates) and $m+2$ (Eisenstein series of all levels), so $M_s$ is of strictly positive weights, and $E^1_{s,s+1}=0$. 

Now set $X_{(n)}:=\map_{B G_{F}}(BG_{F},B\Pi_n)$; thus $X_{(0)}$ consists of  representations $G_{F} \to \GL_2(\hat{\Z})$ whose determinant is the Tate motive, conjugation by $\SL_2(\hat{\Z})$ giving  equivalences.  Since  $\pi_iX_{(n)}=0$ for $i>1$, we then have exact sequences
 \begin{align*}
  0 \to  \pi_1 X_{(n)} \to \pi_1 X_{(n-1)} \to
 \H^1(F, M_n^{*}) \to   \pi_0 X_{(n)} \to \pi_0 X_{(n-1)} \to \H^2(F, M_n^{*}),
 \end{align*}
with a map $X(F) \to  X_{(\infty)}$. 
\end{example}

\begin{example}[Modular forms of weight $2$]\label{modularex2c}
Again taking $X= \cM_{1,1}$ to be the stacky modular curve over $F$, and $x \in X(F)$,  we may consider the pro-unipotent extension
\[
 G_{F} \ltimes \SL_2(\Z)^{\SL_2(\hat{\Z}), \mal} \to  G_{F} \ltimes \SL_2(\hat{\Z}),
\]
setting
\[
 \Pi_n :=  G_{F} \ltimes (\SL_2(\Z)^{\SL_2(\hat{\Z}), \mal}/[\Ru]_{n+1}).
\]

As in Example \ref{modularex2c}, choose  a $G_{F}$-representation $\Lambda$ of rank $2$ over $\hat{\Z}$, with determinant $\hat{\Z}(1)$. 
Write 
\[
M_s:= \CoLie_s (  \LLim_N \H^1(\Gamma(N!),\Ql));
\] 
thus $M_1$ is related to weight $2$ modular forms; as a Galois representation it is mixed of weights $1$ and $2$.
We then have a non-abelian spectral sequence 
\[
 E_1^{s,t}= \H^{1+s-t}(G_{F}, M_s^{*}) \abuts  \pi_{t-s} \map_{B(G_{F}\by_{\bG_m(\hat{\Z})} \GL_2(\hat{\Z}))}(BG_{F}, B\Pi_{\infty} ),
\]
where the map $G_{F} \to \GL_2(\hat{\Z})$ is given by $\Lambda$. 

Set
%
%
$X_{(n)}:=\map_{B G_{F}}(BG_{F},B\Pi_n)$; since  $\pi_iX_{(n)}=0$ for $i>1$, we then have exact sequences
 \begin{align*}
  0 \to  \pi_1 X_{(n)} \to \pi_1 X_{(n-1)} \to
 \H^1(F, M_n^{*}) \to   \pi_0 X_{(n)} \to \pi_0 X_{(n-1)} \to \H^2(F, M_n^{*}),
 \end{align*}
with a map $X(F) \to  X_{(\infty)}$. 
\end{example}

\begin{example}[\'Etale fundamental groups]\label{relmalex}
For any smooth Deligne--Mumford stack $X$ over $\sO_{F,\Sigma}$ admitting a smooth relative compactification, with $x \in X(\sO_{F,\Sigma})$, we can generalise the examples above by considering any $G_{F,\Sigma}$-equivariant Zariski-dense representation $\rho \co \pi_1^{\et}(\bar{X}, \bar{x})^{\wedge_{\Sigma}}\to R(\Ql)$ to a pro-reductive affine group scheme $R$ over $\Ql$. If there is no rational basepoint,  we can instead  take a  $G_{F,\Sigma}$-equivariant set  $\cB \subset X(\sO_{\bar{F},\Sigma})$ of basepoints, then consider the $G_{F,\Sigma}$-equivariant surjection from the groupoid $ \pi_1^{\et}(\bar{X},\cB)$ to the contractible groupoid on objects $\cB$, with relative Malcev completions as in \cite[\S \ref{htpy-malcev}]{htpy}).

We may then set
\[
 \Pi_n :=  G_{F,\Sigma} \ltimes(( \pi_1^{\et}(\bar{X}, \bar{x})^{R, \mal}/[\Ru]_{n+1})\by_{R(\Ql)}\rho(\pi_1^{\et}(\bar{X}, \bar{x})^{\wedge_{\Sigma}} )),
\]
with $P_n:= \ker(\Pi_n \to \Pi_{n-1})$ a quotient of $(\CoLie_n (\H^1(\bar{X},O(R))))^*$  described as in Remark \ref{relmalpropsrmk}. 

For any section $\sigma$ of the projection $G_{F,\Sigma} \ltimes \rho(\pi_1^{\et}(\bar{X}, \bar{x})^{\wedge_{\Sigma}}) \to G_{F,\Sigma}$, we then have a non-abelian spectral sequence 
\[
 E_1^{s,t}= \H^{1+s-t}(G_{F,\Sigma},P_s) \abuts  \pi_{t-s} \map_{B(G_{F,\Sigma}\ltimes\rho(\pi_1^{\et}(\bar{X}, \bar{x})^{\wedge_{\Sigma}} ) )}(BG_{F,\Sigma}, B\Pi_{\infty} ),
\] 
where the map $G_{F,\Sigma} \to G_{F,\Sigma} \ltimes \rho(\pi_1^{\et}(\bar{X}, \bar{x})^{\wedge_{\Sigma}})$ is given by $\sigma$. 
\end{example}

\begin{example}[\'Etale homotopy types]\label{exhtpy}
We may refine the previous example by considering \'etale homotopy types in place of fundamental groups. Take a locally Noetherian simplicial scheme $X$, and a geometric point $\bar{x}$. We can then form the \'etale  topological type  $X_{\et} \in \pro(\bS)$ as defined in \cite[Definition 4.4]{fried}. In particular, we can apply this to a simplicial scheme resolving a locally Noetherian algebraic stack (by \cite[Theorem 4.7]{stacks2}, such resolutions exist even for higher Artin stacks, and the description of \cite[Theorems 4.10  and Remark 4.11]{stacks2} ensures that the choice does not affect the homotopy type). 

Note that $(X_{\et})_0$ is the set of geometric points of $X_0$ (with some bound imposed on the cardinalities of the associated fields). Consider the reduced pro-simplicial set $(X_{\et}, \bar{x}) \subset  X_{\et}$ given by setting $(X_{\et}, \bar{x})_n$ to consist of $n$-simplices with fixed vertex $\bar{x}$. We may then apply the  simplicial loop groupoid functor of \cite{pathgpd}
to get a pro-simplicial groupoid $GX_{\et}$, and restricting to the vertex $\bar{x}$ gives a pro-simplicial groupoid $G(X_{\et},\bar{x})$ with $\pi_0G(X_{\et}, \bar{x})\cong \pi_{1}^{\et}(X, \bar{x})$. 

If $X$ is defined over $\sO_{F,\Sigma}$, with each $X_n$  admitting a smooth relative compactification, set $\bar{X}:=X\ten_{ \sO_{F,\Sigma}}\sO_{\bar{F},\Sigma}$. 
Now fix a Zariski-dense representation $\rho\co  \pi_1^{\et}(X,\bar{x}) \to S(\Ql)$ to a  pro-reductive pro-algebraic group $S$, and let $R$ be the Zariski closure of $\rho(\pi_1^{\et}(\bar{X},\bar{x}))$, and set $T:= S/R$. We now need to consider fibre sequences, because $G_{F,\Sigma}$ does not explicitly act on our model for $\bar{X}_{\et}$. If the  $G_{F,\Sigma}$-representation $\H^*(\bar{X}, V)$ is an extension of $T$-representations  for all $R$-representations $V$, then  \cite[Theorem \ref{weiln-lfibrations}]{weiln} combines with \cite{friedlanderfib} to give a fibre sequence
\[
 \bar{W}G(\bar{X}_{\et}, \bar{x})^{R,\mal} \to \bar{W}G(X_{\et}, \bar{x})^{S,\mal} \to BG(\sO_{F,\Sigma,\et})^{T,\mal} 
\]
of pro-algebraic homotopy types over $\Ql$. By \cite[Appendix  A, Equation (1)]{cesnaviciusPT}), $ BG_{F,\Sigma}$ and  $\sO_{F,\Sigma,\et}$ have isomorphic cohomology for $\ell$-torsion coefficients, so  $G(BG_{F,\Sigma})^{T,\mal}\simeq  G(\sO_{F,\Sigma,\et})^{T,\mal}$ and we have a long exact sequence 
\begin{eqnarray*}
\ldots \to \varpi_n(\bar{X} ,\bar{x})^{R,\mal} \to \varpi_n(X,\bar{x})^{S,\mal}\to \varpi_n(BG_{F,\Sigma})^{T,\mal} \to \varpi_{n-1}(\bar{X} ,\bar{x})^{R,\mal}\to \\
\ldots \to \pi_1^{\et}(\bar{X} ,\bar{x} )^{R,\mal} \to \pi_1^{\et}(X,\bar{x})^{S,\mal}\to G_{F,\Sigma}^{T,\mal} \to 1
\end{eqnarray*}
of pro-algebraic homotopy groups; in particular we will have an exact sequence of completed fundamental groups whenever $\varpi_2(BG_{F,\Sigma})^{T,\mal}=0$, i.e. if $G_{F,\Sigma}$ is $2$-good relative to $T$ in the sense of \cite[Definition \ref{weiln-relgood2}]{weiln} and \cite[\S \ref{heid-relgoodsn}]{heid}. 

We may then set $\tilde{\Pi}_n$ to be the simplicial topological group given by the homotopy fibre product
\[
\tilde{\Pi}_n:= (G(X_{\et}, \bar{x})^{S,\mal}/[U]_{n+1})\by^h_{G(BG_{F,\Sigma})^{T,\mal} }G_{F,\Sigma},
\]
where $U= \Ru G(\bar{X}_{\et}, \bar{x})^{R,\mal}$; 
in particular, $\tilde{\Pi}_0 = S\by_T G_{F,\Sigma}$.
Note that since  $B\tilde{\Pi}_{\infty}$ is equipped with a map from $\bar{W}G(\bar{X}_{\et}, \bar{x}) $, there is a canonical morphism
\[
 X(\sO_{F,\Sigma}) \to \map_{B(G_{F,\Sigma})}(BG_{F,\Sigma}, B\tilde{\Pi}_{\infty} )
\]
in the homotopy category of pro-ind-pro-simplicial sets.

We will then have a non-abelian spectral sequence 
\[
 E_1^{s,t} \abuts  \pi_{t-s} \map_{B(G_{F,\Sigma})}(BG_{F,\Sigma}, B\tilde{\Pi}_{\infty} ),
\]
with
\[ 
 E_1^{s,t}= \begin{cases}
             \bH^{1+s-t}(G_{F,\Sigma}, [U]_s/[U]_{s+1}) & s \ge 1 \\
 \pi_{t}\map_{BT}(BG_{F,\Sigma}, BS) & s=0,
            \end{cases}
\]
where $[U]_s/[U]_{s+1}$ is dual to  $ \CoLie_n ((\oR\Gamma(\bar{X},O(R))/\Ql)[1])$  (associating $O(R)$ with an ind-lisse $\Ql$ sheaf via $\rho$ and   Remark \ref{relmalpropsrmk}),  for $\CoLie$ now the cofree graded Lie coalgebra, and $\tilde{\H}^{\bt}= \H^{\bt}/\H^0$. Beware that the terms $ E_1^{s,t} $ depend on an element of $E_1^{0,0}$ to determine  the Galois action on $U$.

The filtration $[U]_n$ corresponds to the good truncation filtration on $\oR \Gamma(\bar{X},O(R))$, but there are variants for other filtrations, replacing  ${\H}^{1+\bt}(\bar{X},O(R))$ with the $E_1$ page of the associated spectral sequence. For the case of the weight filtration on a quasi-projective variety, with representations tamely ramified around the divisor, see  \cite[Corollary \ref{weiln-htpyleray}]{weiln}. 

Note that taking path components of simplicial groups $\tilde{\Pi}_n$ 
gives morphisms
\[
\map_{B(G_{F,\Sigma})}(BG_{F,\Sigma}, B\tilde{\Pi}_n ) \to \map_{B(G_{F,\Sigma})}(BG_{F,\Sigma}, B\Pi_n ) 
\]
for the groups $\Pi_n$ of Example \ref{relmalex}. When $\H^{\ge 2}(\bar{X},O(R))=0$ (such as for the stacky modular curve), the filtration $[U]_n$ is just equivalent to the lower central series filtration of Example \ref{relmalex}, so the morphisms are weak equivalences. 
For general $X$, the towers will be different, but whenever the higher relative Malcev homotopy groups of $X$ vanish, the towers will converge to the same limit.  
\end{example}

\begin{remark}\label{modex2fromexhtpyrmk}
 To recover Example \ref{modularex2} from Example \ref{exhtpy}, we take  $S$ to be the Zariski closure of the image of the representation
\[
\pi_1^{\et}(\cM_{1,1},\bar{x}) \to \GL(\bT_{\ell,\bar{x}}\ten \Q) \by \prod_m \GL(\H^1(\Gamma, V_m))
\]
given by combining the monodromy representation on $\bT_{\ell,\bar{x}}\ten \Q$ with the pullbacks of the $G_{F,\Sigma}$-representations $\H^1(\Gamma, V_m)$. Then the Zariski closure $R$ of the image of $ \pi_1^{\et}(\bar{X},\bar{x})$ is just $\SL_2 \by \{1\}$, and the quotient $T:= S/R$ is the Zariski closure of the representation $G_{F,\Sigma}\to \bG_m(\Ql) \by \prod_m \GL(\H^1(\Gamma, V_m))$. The conditions of \cite[Theorem \ref{weiln-lfibrations}]{weiln} are then satisfied by construction.
\end{remark}

\begin{remark}[Algebraic monoids and weighted completion]
A variant of Example \ref{exhtpy} is given by taking  a Zariski-dense representation $\rho\co  \pi_1^{\et}(X,\bar{x}) \to S(\Ql)$ to a  pro-reductive pro-algebraic monoid $S$, with $R$ the Zariski closure of $\rho(\pi_1^{\et}(\bar{X},\bar{x}))$, and $T:= S/R$. Then  the theory of relative Malcev completion still works to give a homotopy fibre sequence
\[
 G(\bar{X}_{\et}, \bar{x})^{R,\mal} \to G(X_{\et}, \bar{x})^{S,\mal} \to G(\sO_{F,\Sigma,\et})^{T,\mal}
\]
of simplicial pro-algebraic monoids, and we may restrict to invertible elements ($G(\bar{X}_{\et}, \bar{x})^{R,\mal}\by_R R^{\by}$ etc.)   and proceed as before. 

For Example \ref{modularex2}, that would mean adapting Remark \ref{modex2fromexhtpyrmk} by taking $S$ to be 
the Zariski closure of the image of the representation
\[
\pi_1^{\et}(\cM_{1,1},\bar{x}) \to \End(\bT_{\ell,\bar{x}}\ten \Q) \by \prod_m \End(\H^1(\Gamma, V_m)^*).
\]
The group $R$ would still be $\SL_2$, and the obstruction spaces would be the same, but this gives a smaller sequence deriving them by ignoring data from irrelevant representations.  

As noted in \cite[Remark \ref{heid-monoidrk}]{heid}, the weighted completions of \cite{hainmatweight} relative to a pro-reductive group $S$ with central cocharacter $\chi \co \bG_m \to S$ can also be regarded as completions relative to a monoid, namely $S\by_{\chi,\bG_m}\bA^1$. This has the effect of excluding some, but not all, irrelevant representations in Example \ref{modularex2} (analogous to the distinction between effective motives and motives of non-negative weight).

Again, weighted completions generate the same obstructions as unweighted completion. In particular, for a relative curve $C \to T$  in characteristic $0$ and a generic point $\eta$ of $T$, we may consider the fibre sequence $C_{\bar{\eta}} \to C_{\eta} \to \Spec k(T)$ as in \cite{hainUniversal}. If we let $S$ be the Zariski closure of the natural representation from $\pi_1^{\et}(C, \bar{\eta}_C)$ to $\GL(\H^1_{\et}(C_{\bar{\eta}}, \Ql)^*)$ (or to $\End(\H^1_{\et}(C_{\bar{\eta}}, \Ql)^*)$, or to any algebraic monoid in between), then we have a fibre sequence
\[
 G((C_{\bar{\eta}})_{\et})^{1,\mal} \to G( (C_{\eta})_{\et})^{S,\mal} \to G( \Spec k(T)_{\et} )^{S,\mal}.
\]
The associated  obstruction and lifting data of the same type as the unipotent obstructions we encountered in Example \ref{unipex1}, with relative completion in this case just providing an alternative description.
These obstructions  (and particularly the second stage of the tower) are the main technical ingredient of  \cite{hainUniversal}.
\end{remark}

\section{Non-abelian reciprocity laws as obstruction maps}\label{recsn}

\subsection{Ad\'elic mapping spaces and compact supports}\label{adelicmapsn}

\begin{definition}
In the category of pro-simplicial sets, we set
\[
BG_{ \bA_F^{\in \Sigma}}:= \Lim_{\substack{T \subset \Sigma \\ \text{finite}}}  (\coprod_{v \in T}  BG_v \sqcup \coprod_{v \in \Sigma-T}  B(G_v/I_v)),
\]
where $G_v= G_{F_v} \subset G_F$, and $I_v \lhd G_v$ is the inertia subgroup;
beware that both the coproduct and and the limit are taken in the category of pro-simplicial sets. 
\end{definition}
Note that there is a natural map 
$BG_{ \bA_F^{\in \Sigma}} \to B G_{F,\Sigma}$.

\begin{definition}\label{adeliccohodef1}
Given a finite abelian  group $U$ equipped with a continuous $G_{F,\Sigma}$-action, define
\begin{align*}
 \oR\Gamma(G_{\bA_F^{\in \Sigma}},U)&:= \prod'_{v}\oR\Gamma(G_{v},U)\\ 
&= \LLim_T ( \prod_{v\in T }\oR\Gamma(G_{v},U)\by\prod_{v \in \Sigma -T}\oR\Gamma(G_{v}/I_{v},U)),
\end{align*}
where $\oR\Gamma(G,-)$ denotes the continuous cohomology complex and $T$ ranges over all finite subsets of $\Sigma$ containing the places at which the action on $U$ is ramified.
\end{definition}

\begin{definition}\label{adeliccohodef2}
Given a continuous profinite   $G_{F,\Sigma}$-representation $U$, define
\begin{align*}
\oR\Gamma(G_{\bA_F^{\in \Sigma}},U)&:=\oR\Lim_i\oR\Gamma(G_{\bA_F^{\in \Sigma}},U_i),
\end{align*}
where the $U_i$ range over the finite Galois-equivariant quotients of $U$.

Similarly, given a continuous discrete torsion   $G_{F,\Sigma}$-representation $U$, define
\begin{align*}
\oR\Gamma(G_{\bA_F^{\in \Sigma}},U)&:=\LLim_i\oR\Gamma(G_{\bA_F^{\in \Sigma}},U_i),
\end{align*}
where the $U_i$ range over the finite Galois-equivariant subgroups of $U$.
\end{definition}

\begin{definition}\label{adeliccohodef3}
Given a continuous $G_{F,\Sigma}$-representation $V$ in finite-dimensional vector spaces over $\Ql$, define
\begin{align*}
 \oR\Gamma(G_{\bA_F^{\in \Sigma}},V)&:= \LLim_j \oR\Gamma(G_{\bA_F^{\in \Sigma}},V_j),
\end{align*}
where the $V_j$ range over the filtered direct system of all profinite subrepresentations of $V$. 

Given a continuous $G_{F,\Sigma}$-representation $V=\Lim_{\alpha}V_{\alpha}$ in profinite-dimensional vector spaces over $\Ql$, define
\begin{align*}
 \oR\Gamma(G_{\bA_F^{\in \Sigma}},V)&:= \oR\Lim_{\alpha} \oR\Gamma(G_{\bA_F^{\in \Sigma}},V_{\alpha}).
\end{align*}
\end{definition}
Note that for any $G_{F,\Sigma}$-equivariant lattice $\Lambda$ in a finite-dimensional $G_{F,\Sigma}$-representation $V$ over $\Ql$, the system $\{\ell^{-n}\Lambda\}_n$ of profinite subrepresentations is cofinal, so 
\[
\oR\Gamma(G_{\bA_F^{\in \Sigma}},V)\simeq \oR\Gamma(G_{\bA_F^{\in \Sigma}},\Lambda)\ten_{\Zl}\Ql.
\]

\begin{remark}\label{adeliccohormk}
  Given a finite abelian group $U$ equipped with a continuous $G_{F,\Sigma}$-action, observe that for all $r$,
\begin{align*}
 &\map_{B G_{F,\Sigma}}( BG_{ \bA_F^{\in \Sigma}}, B(G_{F,\Sigma} \ltimes B^rU)) \simeq \\
&\LLim_{\substack{T \subset \Sigma \\ \text{finite}}} (\prod_{v \in T} \map_{B G_{F,\Sigma}}( BG_v, B(G_{F,\Sigma} \ltimes B^rU)) \by \prod_{v \in \Sigma-T} \map_{B G_{F,\Sigma}}(  B(G_v/I_v), B(G_{F,\Sigma} \ltimes B^rU))),
\end{align*}
so $\pi_i\map_{B G_{F,\Sigma}}( BG_{ \bA_F^{\in \Sigma}}, B(G_{F,\Sigma} \ltimes B^rU)) \cong \H^{r+1-i}(G_{\bA_F^{\in \Sigma}},U)$. 


The convention of Definition \ref{adeliccohodef3} ensures that this equivalence extends to profinite groups or (pro-)finite-dimensional $\Ql$-vector spaces $U$ (regarding $BU$ as a pro-simplicial set  or a (pro-)ind-pro-simplicial set). 
\end{remark}

 Beware that 
$ BG_{ \bA_F^{\in \Sigma}}$ 
is
not necessarily the same as the \'etale homotopy 
type $(\Spec  \bA_F^{\in \Sigma})_{\et}$. However, there is a map from the former 
to the profinite 
completion of the latter
(see Corollary \ref{adelemapcor}); on the level of fundamental groups this is just the observation that a finite lisse \'etale sheaf on $\bA_F^{\in \Sigma}$ is only ramified at $F_v$ for finitely many places in $v \in \Sigma$.

We may now adapt all the examples from \S \ref{rationalsn} to consider ad\'elic points instead of rational points. In particular:

\begin{example}[Nilpotent completion of $ \pi_1^{\et}(\bar{X})$]\label{nilpex2} 
Using the pro-simplicial set $BG_{ \bA_F}$, we may adapt Example \ref{nilpex1}.
 If $X$ is a Deligne--Mumford stack over $F$, and $\bar{X}:=X\ten_{F}\bar{F}$, with some geometric point $\bar{x}$, again consider the lower central series 
\[
 \Pi_n:= \pi_1^{\et}(X,\bar{x})/[\pi_1^{\et}(\bar{X}, \bar{x})]_{n+1},
\]
where we write $[\pi]_1:=\pi$, $[\pi]_{k+1}:= [\pi, [\pi]_k]$. Thus $\Pi_0=G_F$, and taking $Y=BG_{ \bA_F} $ in the tower of \S \ref{nilpsn}, we get  the non-abelian spectral sequence 
\[
 E_1^{s,t}= \H^{1+s-t}(  \bA_F, [\bar{\pi}]_s/[\bar{\pi}]_{s+1} ) \abuts  \pi_{t-s} \map_{BG_{F}}(BG_{ \bA_F}, B\Pi_{\infty} )
\]
of groups and sets, where we write $\bar{\pi}:= \pi_1^{\et}(\bar{X}, \bar{x})$. 

The reasoning above (without recourse to Corollary \ref{adelemapcor}) gives  a morphism of groupoids from $X(\bA_F)$ to the fundamental groupoid $\pi_f \map_{BG_{F}}(BG_{ \bA_F}, B\Pi_{\infty} )$, so the spectral sequence gives  obstructions to the existence of such ad\'elic points.

A variant of this construction is given by taking  $X$ smooth over $\sO_{F,\Sigma}$  admitting a smooth relative compactification. For $\bar{X}:=X\ten_{\sO_{F,\Sigma}}\sO_{\bar{F},\Sigma}$, we can then  take $\Pi_{\infty}$ to be the relative pro-$\ell$ completion of $\pi_1^{\et}(X,\bar{x}) $ over $G_{F,\Sigma}$  (with our convention that all $F$-prime factors of $\ell$ lie in $\Sigma$), giving a non-abelian spectral sequence 
\[
 E_1^{s,t}= \H^{1+s-t}(  \bA_F^{\in \Sigma}, ([\bar{\pi}]_s/[\bar{\pi}]_{s+1})\ten_{\hat{\Z}}\Zl ) \abuts  \pi_{t-s} \map_{BG_{F,\Sigma}}(BG_{ \bA_F^{\in \Sigma}}, B\Pi_{\infty} )
\]
with a morphism $X(\bA_F^{\in \Sigma})\to \pi_f \map_{BG_{F,\Sigma}}(BG_{ \bA_F^{\in \Sigma}}, B\Pi_{\infty} )$.
\end{example}

\begin{example}[Unipotent completion of $\pi_1^{\et}(\bar{X})$]\label{unipex2}
For unipotent ad\'elic obstructions, we can adapt Example \ref{unipex1}, taking  a smooth scheme $X$ over $\sO_{F,\Sigma}$ admitting a smooth relative compactification, with $\bar{X}:=X\ten_{\sO_{F,\Sigma}}\sO_{\bar{F},\Sigma}$ and $\bar{x}$ a geometric point.  Assume that we have a point $x \in X(\sO_{F,\Sigma})$ under $\bar{x}$ (if not, there are analogous statements using a $G_{F,\Sigma}$-equivariant set $\cB\subset X(\sO_{\bar{F},\Sigma})$ of basepoints instead), and  consider the lower central series 
\[
 \Pi_n:= G_{F,\Sigma} \ltimes  (\pi_1^{\et}(\bar{X},\bar{x})\ten \Ql/[\pi_1^{\et}(\bar{X}, \bar{x})\ten\Ql]_n),
\]
of the pro-unipotent Malcev completion $\pi_1^{\et}(\bar{X},\bar{x})\ten \Ql$.

Thus $\Pi_0=G_{F,\Sigma}$, and taking $Y=BG_{\bA_F^{\in \Sigma}}$, we get  a non-abelian spectral sequence 
\[
 E_1^{s,t}= \H^{1+s-t}(G_{\bA_F^{\in \Sigma}}, [\bar{\pi}\ten \Ql]_s/[\bar{\pi}\ten \Ql]_{s+1} ) \abuts  \pi_{t-s} \map_{BG_{F,\Sigma}}(BG_{\bA_F^{\in \Sigma}}, B\Pi_{\infty} )
\]
of groups and sets, where we write $\bar{\pi}:= \pi_1^{\et}(\bar{X}, \bar{x})$. As in Example \ref{nilpex2}, there is a natural  morphism  $X(\bA_F^{\in \Sigma})\to \pi_f \map_{BG_{F,\Sigma}}(BG_{F,\Sigma}, B\Pi_{\infty} )$ of groupoids, but the obstruction spaces are easier to calculate in this setting.
\end{example}

\begin{example}[Modular forms of level $1$]\label{modularex3}
As in Example \ref{modularex2}, let
  $X= \cM_{1,1}$ be the stacky modular curve, take $x \in X(\sO_{F,\Sigma})$, and consider the resulting pro-unipotent extension
\[
 G_{F,\Sigma} \ltimes( \SL_2(\Z)^{\SL_2, \mal}\by_{\SL_2(\Ql)}\SL_2(\Zl)) \to  G_{F,\Sigma} \ltimes \SL_2(\Zl),
\]
then set
\[
 \Pi_n :=  G_{F,\Sigma} \ltimes( (\SL_2(\Z)^{\SL_2, \mal}/[\Ru]_{n+1})\by_{\SL_2(\Ql)}\SL_2(\Zl)).
\]

Using Example \ref{modularex1}, a lift $BG_{ \bA_F^{\in \Sigma}} \to G_{F,\Sigma} \ltimes \SL_2(\Zl)$ of the homomorphism $BG_{ \bA_F^{\in \Sigma}} \to G_{F,\Sigma}$ is equivalent to giving $G_v$-representations $\Lambda_v$ of rank $2$ over $\Zl$ for $v \in \Sigma$, with determinant $\Zl(1)$, such that 
for each $n$, there are only finitely many $v \in \Sigma$ with $\Lambda_v/\ell^n$ ramified. Write $\Lambda$ for the system $\{\Lambda_v\}_v$. 


As in Example \ref{modularex1}, write $L_s:= \CoLie_s (\bigoplus_m \H^1(\SL_2(\Z),V_m) \ten S^m(\Lambda)(-m))$.  
The pro-unipotent generalisation of Proposition \ref{abpropunip} then combines with Examples \ref{SL2ex1} to  give a non-abelian spectral sequence 
\[
 E_1^{s,t}= \H^{1+s-t}(G_{\bA_F^{\in \Sigma}}, L_s^{*}) \abuts  \pi_{t-s} \map_{B(G_{F,\Sigma}\by_{\bG_m(\Zl)} \GL_2(\Zl))}(BG_{ \bA_F^{\in \Sigma}}, B\Pi_{\infty} ),
\]
where the map $BG_{ \bA_F^{\in \Sigma}}\to \GL_2(\Zl)$ is given by $\Lambda$. Note that  $E^1_{s,s+1}=0$ as $L_s$ is of non-zero weights. 

Now set $X_{(n)}:=\map_{B G_{F,\Sigma}}(BG_{ \bA_F^{\in \Sigma}},B\Pi_n)$; thus $X_{(0)}$ consists of  sets $\{\Lambda_v\}_v$ as above, conjugation by $\SL_2(\Zl)$ giving  equivalences, so $\pi_1(X_{(0)}, [\Lambda])$ consists of elements of $\SL_2(\Zl)$ commuting with the actions of the $G_v$ on $\Lambda$.  Since  $\pi_iX_{(n)}=0$ for $i>1$, we then have exact sequences
 \begin{align*}
  0 \to  \pi_1 X_{(n)} \to \pi_1 X_{(n-1)} \to
 \H^1(G_{\bA_F^{\in \Sigma}} , L_n^{*}) \to   \pi_0 X_{(n)} \to \pi_0 X_{(n-1)} \to \H^2(G_{\bA_F^{\in \Sigma}}, L_n^{*}),
 \end{align*}
with a map $X(\bA_F^{\in \Sigma}) \to  X_{(\infty)}$. Here,  $\pi_0X(\bA_F^{\in \Sigma})$ is the set of isomorphism classes of elliptic curves over $\bA_F^{\in \Sigma}$, and $\pi_1(X(\bA_F^{\in \Sigma}),x)$ the group of automorphisms of the elliptic curve $E_x$ over $\bA_F^{\in \Sigma}$.  

In other words, given a system $\Lambda=\{\Lambda_v\}_{v\in \Sigma}$ of rank $2$ local Galois representations  over $\Zl$ as above,  these sequences give a tower of obstructions to lifting $\Lambda$ to an elliptic curve over $ \bA_F^{\in \Sigma}$ with Tate module $\Lambda$, and characterise the ambiguity of the lift at each stage. As in Examples \ref{SL2ex1}, there is an entirely similar treatment for profinite completions of congruence subgroups $\Gamma \le {\SL_2(\Z)}$, replacing $\cM_{1,1} $ with the modular curve $Y_{\Gamma}$.
 \end{example}

\begin{example}[\'Etale homotopy types]\label{exhtpy2}
We now consider \'etale homotopy types in place of fundamental groups,  as in Example \ref{exhtpy}. Take a smooth Deligne--Mumford stack $X$ over $\sO_{F,\Sigma}$ admitting a smooth relative compactification,  and set $\bar{X}:=X\ten_{ \sO_{F,\Sigma}}\sO_{\bar{F},\Sigma}$. For  a geometric point $\bar{x}$ and a Zariski-dense representation $\rho\co  \pi_1^{\et}(X,\bar{x}) \to S(\Ql)$ to a  pro-reductive pro-algebraic group $S$, let $R$ be the Zariski closure of $\rho(\pi_1^{\et}(\bar{X},\bar{x}))$, and set $T:= S/R$. 

We then look at the pro-simplicial group $G(X_{\et}, \bar{x})$  associated to  the \'etale  topological type  $X_{\et} \in \pro(\bS)$.  If the  $G_{F,\Sigma}$-representation $\H^*(\bar{X}, V)$ is an extension of $T$-representations  for all $R$-representations $V$, then we may again set $\Pi_n$ to be the simplicial topological group given by the homotopy fibre product
\[
\Pi_n:= (G(X_{\et}, \bar{x})^{S,\mal}/[U]_{n+1})\by^h_{G(BG_{F,\Sigma})^{T,\mal} }G_{F,\Sigma},
\]
where $U= \Ru G(\bar{X}_{\et}, \bar{x})^{R,\mal}$. Note that since  $B\Pi_{\infty}$ is equipped with a map from $\bar{W}G(\bar{X}_{\et}, \bar{x}) $, Corollary \ref{adelemapcor}  gives a canonical  morphism 
\[
 X(\bA_F^{\in \Sigma}) \to \map_{B(G_{F,\Sigma})}(BG_{ \bA_F^{\in \Sigma}}, B\Pi_{\infty}).
\]
in the homotopy category of pro-ind-pro-simplicial sets.

We  then have a non-abelian spectral sequence 
\[
 E_1^{s,t} \abuts  \pi_{t-s} \map_{B(G_{F,\Sigma})}(BG_{ \bA_F^{\in \Sigma}}, B\Pi_{\infty} ),
\]
with
\[ 
 E_1^{s,t}= \begin{cases}
             \bH^{1+s-t}( \bA_F^{\in \Sigma}, [U]_s/[U]_{s+1}) & s \ge 1 \\
 \pi_{t}\map_{BT}( BG_{ \bA_F^{\in \Sigma}}, BS)  
& s=0,
            \end{cases}
\]
where $[U]_s/[U]_{s+1}$ is dual to  $ \CoLie_n ((\oR\Gamma(\bar{X},O(R))/\Ql)[1])$.
\end{example}

\subsection{Reciprocity laws}\label{recipsn}

The idea behind non-abelian reciprocity laws is to compare the towers of obstructions for rational and ad\'elic points, giving a relative obstruction tower for rational points over ad\'elic points.

\begin{definition}\label{cptCC}
Given a continuous $G_{F,\Sigma}$-representation $U$, we set
\[
 \oR\Gamma_c( G_{F,\Sigma},U):= \cocone( \oR\Gamma( G_{F,\Sigma}, U)\to \oR\Gamma(G_{\bA_F^{\in \Sigma}}, U)),
\]
where $U$ can be any of the types of representation considered in Definitions \ref{adeliccohodef1}--\ref{adeliccohodef3}.

\end{definition}

\subsubsection{Abelian Poitou--Tate duality}\label{abPT}

\begin{definition}
 Define a contravariant functor $(-)^{\vee}$ on the category of abelian groups by
\[
 A^{\vee}:= \Hom_{\Z}(A,\Q/\Z).
\]
\end{definition}

\begin{definition}
Define a contravariant functor $(-)^{\vee}(1)$ on the category of continuous $G_{F,\Sigma}$-representations in locally compact topological torsion abelian groups (in the sense of \cite{HoffmannSpitzweckLCA}) by
\[
 A^{\vee}(1):= \Hom_{\Z,\cts}(A,\mu_{\infty}).
\]
\end{definition}
Note that $(-)^{\vee}$ preserves the subcategory of finite representations, and interchanges profinite and discrete representations.

\begin{lemma}\label{PTfinitelemma}
 If $\Sigma$ is a finite set of finite places containing all primes dividing $\ell$, and $U$ a continuous pro-$\ell$ $G_{F,\Sigma}$-representation,   then there is a canonical equivalence
\[
 \oR\Gamma_c( G_{F,\Sigma},U)\simeq \oR\Gamma( G_{F,\Sigma},U^{\vee}(1))^{\vee}[-3].
\]

If $V$ is  a continuous $G_{F,\Sigma}$-representation in finite-dimensional vector spaces over $\Ql$, then we also have
\[
 \oR\Gamma_c( G_{F,\Sigma},V)\simeq \oR\Gamma( G_{F,\Sigma},V^*(1))^*[-3].
\]
\end{lemma}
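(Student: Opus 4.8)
\medskip
\noindent\textbf{Proof plan.}
The first equivalence is the complex-level (derived) form of Poitou--Tate global duality, and the plan is to reduce it to the classical statement for finite modules and then pass to a limit. Write $U=\Lim_i U_i$ as the cofiltered limit of its finite Galois-equivariant quotients. By Definition \ref{adeliccohodef2} (and the analogous convention for $\oR\Gamma(G_{F,\Sigma},-)$ of profinite coefficients), both $\oR\Gamma(G_{F,\Sigma},U)$ and $\oR\Gamma(G_{\bA_F^{\in \Sigma}},U)$ are computed as $\oR\Lim_i$ of the corresponding complexes for the $U_i$, so, $\cocone$ being a homotopy limit,
\[
 \oR\Gamma_c(G_{F,\Sigma},U)\simeq \oR\Lim_i \oR\Gamma_c(G_{F,\Sigma},U_i).
\]
On the dual side $U^{\vee}(1)=\LLim_i U_i^{\vee}(1)$ is discrete and $\ell$-torsion, so $\oR\Gamma(G_{F,\Sigma},U^{\vee}(1))=\LLim_i \oR\Gamma(G_{F,\Sigma},U_i^{\vee}(1))$ (continuous cohomology commuting with filtered colimits of discrete modules); applying $(-)^{\vee}=\oR\Hom_{\Z}(-,\Q/\Z)$, which is exact since $\Q/\Z$ is injective and which carries filtered homotopy colimits to homotopy limits, gives
\[
 \oR\Gamma(G_{F,\Sigma},U^{\vee}(1))^{\vee}\simeq \oR\Lim_i \oR\Gamma(G_{F,\Sigma},U_i^{\vee}(1))^{\vee}.
\]
Hence it is enough to construct, functorially in a finite module $M$, an equivalence $\oR\Gamma_c(G_{F,\Sigma},M)\simeq \oR\Gamma(G_{F,\Sigma},M^{\vee}(1))^{\vee}[-3]$.

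For finite $M$ (recall $\Sigma$ is finite and contains the primes above $\ell$), classical Poitou--Tate duality supplies, naturally in $M$, perfect pairings $\H^i_c(G_{F,\Sigma},M)\times \H^{3-i}(G_{F,\Sigma},M^{\vee}(1))\to \Q/\Z$ for all $i$. To promote these to a quasi-isomorphism of complexes I would use the functorial cup-product pairing $\oR\Gamma_c(G_{F,\Sigma},M)\ten^{\mathbf{L}}\oR\Gamma(G_{F,\Sigma},M^{\vee}(1))\to \Q/\Z[-3]$ inducing the above on cohomology; since $G_{F,\Sigma}$ has finite $\ell$-cohomological dimension, both sides are perfect complexes, so the adjoint map $\oR\Gamma_c(G_{F,\Sigma},M)\to \oR\Gamma(G_{F,\Sigma},M^{\vee}(1))^{\vee}[-3]$ is a quasi-isomorphism once it is one on cohomology --- which is exactly the classical statement. (Alternatively one may quote the complex-level global duality theorem directly, e.g.\ from the theory of Selmer complexes.) Taking $\oR\Lim_i$ then yields the first equivalence.

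For the second statement, fix a $G_{F,\Sigma}$-stable $\Zl$-lattice $\Lambda\subset V$, write $\Lambda^{*}:=\Hom_{\Zl}(\Lambda,\Zl)$ and $\Lambda^{*}(1)$ for its Tate twist, and set $C:=\oR\Gamma(G_{F,\Sigma},\Lambda^{*}(1))$, a perfect complex of $\Zl$-modules. By Definition \ref{adeliccohodef3} and the cofinality of $\{\ell^{-n}\Lambda\}_n$, the complexes $\oR\Gamma_c(G_{F,\Sigma},V)$ and $\oR\Gamma(G_{F,\Sigma},V^{*}(1))$ are obtained from those for $\Lambda$, resp.\ $\Lambda^{*}(1)$, by $-\ten_{\Zl}\Ql$; in particular $\oR\Gamma(G_{F,\Sigma},V^{*}(1))^{*}\simeq \oR\Hom_{\Zl}(C,\Zl)\ten_{\Zl}\Ql$. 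Applying the first part to the pro-$\ell$ representation $\Lambda$ gives $\oR\Gamma_c(G_{F,\Sigma},\Lambda)\simeq \oR\Gamma(G_{F,\Sigma},\Lambda^{\vee}(1))^{\vee}[-3]$. Since $\Lambda$ is a lattice, $\Lambda^{\vee}(1)=\Hom_{\Z,\cts}(\Lambda,\mu_{\infty})\cong \Lambda^{*}(1)\ten_{\Zl}(\Ql/\Zl)$, and continuous cohomology commutes with this filtered colimit and with $-\ten^{\mathbf{L}}_{\Zl}\Zl/\ell^{n}$, so $\oR\Gamma(G_{F,\Sigma},\Lambda^{\vee}(1))\simeq C\ten^{\mathbf{L}}_{\Zl}(\Ql/\Zl)$; using $\oR\Hom_{\Z}(\Ql/\Zl,\Q/\Z)\simeq \Zl$ (as $\Ext^{1}_{\Z}(\Ql/\Zl,\Q/\Z)=0$) we obtain
\[
 \oR\Gamma(G_{F,\Sigma},\Lambda^{\vee}(1))^{\vee}\simeq \oR\Hom_{\Z}(C\ten^{\mathbf{L}}_{\Zl}\Ql/\Zl,\Q/\Z)\simeq \oR\Hom_{\Zl}(C,\Zl).
\]
Combining these and tensoring with $\Ql$ gives $\oR\Gamma_c(G_{F,\Sigma},V)\simeq \oR\Hom_{\Zl}(C,\Zl)\ten_{\Zl}\Ql[-3]\simeq \oR\Gamma(G_{F,\Sigma},V^{*}(1))^{*}[-3]$.

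The main obstacle is the step from the classical Poitou--Tate pairing of cohomology groups to a genuine, natural quasi-isomorphism of complexes, robust enough to survive the inverse limit over the $U_i$; everything else is bookkeeping with the limit conventions of Definitions \ref{adeliccohodef1}--\ref{adeliccohodef3}, the exactness of Pontryagin duality, the flatness of $-\ten_{\Zl}\Ql$, and the standard finiteness of $G_{F,\Sigma}$.
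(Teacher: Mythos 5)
Your proof is correct and takes essentially the same route as the paper. For the first statement the paper simply cites the derived (Selmer-complex) form of Poitou--Tate duality due to Lim and Nekov\'a\v{r} rather than sketching the reduction to finite modules and the chain-level pairing as you do, but you explicitly note that quoting the Selmer-complex duality is the cleaner option; for the second statement your lattice argument, the identification $\Lambda^{\vee}(1)\simeq \Lambda^{*}(1)\ten\Ql/\Zl$, the derived adjunction with $\oR\Hom_{\Z}(\Ql/\Zl,\Q/\Z)\simeq\Zl$, and the final tensoring with $\Ql$ match the paper's proof step for step.
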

\begin{proof}
The first statement is the formulation of Poitou--Tate duality given in \cite{limPTduality}, refining a homological isomorphism from \cite{nekovarSelmerComplexes}. For the second statement, take a $G_{F,\Sigma}$-equivariant lattice $\Lambda\subset V$, and then (writing $\Lambda^*:=\Hom_{\Zl}(\Lambda,\Zl)$),
\begin{align*}
 \oR\Gamma_c( G_{F,\Sigma},V)&\simeq \oR\Gamma_c( G_{F,\Sigma},\Lambda)\ten\Q \\
&\simeq \oR\Gamma( G_{F,\Sigma},\Lambda^{\vee}(1))^{\vee}[-3]\ten \Q\\
&\simeq  \oR\HHom_{\Zl}(\oR\Gamma( G_{F,\Sigma},\Lambda^*(1))\ten \Ql/\Zl, \Ql/\Zl)[-3]\ten \Q\\
&\simeq  \oR\HHom_{\Zl}(\oR\Gamma( G_{F,\Sigma},\Lambda^*(1)), \Zl)[-3]\ten \Q\\
&\simeq  \oR\HHom_{\Zl}(\oR\Gamma( G_{F,\Sigma},\Lambda^*(1)), \Ql)[-3],
\end{align*}
the last isomorphism following because $\H^*( G_{F,\Sigma},\Lambda^*(1))$ has finite rank, $\Sigma$ being finite. The result now follows because $V^*\cong \Lambda^*\ten \Ql$.
\end{proof}

\begin{lemma}\label{PTinfinitelemma}
  If $\Sigma$ is a possibly infinite set of finite places, and $U$ a continuous  $G_{F,\Sigma}$-representation in profinite abelian groups whose order is a unit outside $\Sigma$,   then there is a canonical equivalence
\[
 \oR\Gamma_c( G_{F,\Sigma},U)\simeq \oR\Gamma( G_{F,\Sigma},U^{\vee}(1))^{\vee}[-3],
\]
following the continuous cohomology conventions of Definition \ref{adeliccohodef2}.
\end{lemma}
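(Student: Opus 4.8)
The plan is to bootstrap from the finite-$\Sigma$ statement of Lemma \ref{PTfinitelemma} in two stages: first reduce to $U$ finite, then reduce to $\Sigma$ finite. For the first reduction, write $U = \oR\Lim_i U_i$ as the inverse limit of its finite Galois-equivariant quotients; each $U_i$ still has order a unit outside $\Sigma$. By Definitions \ref{adeliccohodef2} and \ref{cptCC}, and since $\cocone$ commutes with $\oR\Lim$, we get $\oR\Gamma_c(G_{F,\Sigma},U)\simeq \oR\Lim_i \oR\Gamma_c(G_{F,\Sigma},U_i)$. Dually, $U^{\vee}(1) = \LLim_i U_i^{\vee}(1)$ is a discrete torsion representation, so $\oR\Gamma(G_{F,\Sigma},U^{\vee}(1))\simeq \LLim_i\oR\Gamma(G_{F,\Sigma},U_i^{\vee}(1))$ as continuous cohomology commutes with filtered colimits of discrete coefficients; applying $(-)^{\vee}=\Hom_{\Z}(-,\Q/\Z)$, which is exact ($\Q/\Z$ being injective) and turns filtered colimits into limits, gives $\oR\Gamma(G_{F,\Sigma},U^{\vee}(1))^{\vee}\simeq \oR\Lim_i \oR\Gamma(G_{F,\Sigma},U_i^{\vee}(1))^{\vee}$. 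Hence it is enough to treat $U$ finite with $|U|$ a unit outside $\Sigma$.

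Now assume $U$ finite. Choose a finite subset $S_0\subseteq\Sigma$ containing every place at which $U$ (equivalently $U^{\vee}(1)$) is ramified and every place dividing $|U|$ — possible precisely because $|U|$ is a unit outside $\Sigma$. For each finite $S$ with $S_0\subseteq S\subseteq\Sigma$, decomposing $U$ into its $\ell$-primary parts (noting $S$ then contains all $v\mid\ell$ for each $\ell\mid|U|$) lets us apply Lemma \ref{PTfinitelemma} componentwise, giving an equivalence $\oR\Gamma_c(G_{F,S},U)\simeq \oR\Gamma(G_{F,S},U^{\vee}(1))^{\vee}[-3]$, natural in $S$. Since $G_{F,\Sigma}=\Lim_S G_{F,S}$ and $U^{\vee}(1)$ is a fixed finite module on which all these groups act compatibly, continuity of continuous cohomology yields $\oR\Gamma(G_{F,\Sigma},U^{\vee}(1))\simeq\LLim_S\oR\Gamma(G_{F,S},U^{\vee}(1))$, and dualising as above $\oR\Gamma(G_{F,\Sigma},U^{\vee}(1))^{\vee}\simeq \holim_S \oR\Gamma_c(G_{F,S},U)[3]$. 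So the lemma reduces to the purely ``global-versus-adelic'' claim that the natural comparison map $\oR\Gamma_c(G_{F,\Sigma},U)\to \holim_S\oR\Gamma_c(G_{F,S},U)$ is an equivalence.

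To establish this last claim I would interpret $\oR\Gamma_c$ as étale cohomology with compact supports, in the framework of \cite{nekovarSelmerComplexes, limPTduality} underlying Definition \ref{cptCC}: $\oR\Gamma_c(G_{F,\Sigma},U)=\oR\Gamma(\overline{\Spec \sO_F}, j_{\Sigma!}U)$ for the (pro-)open immersion $j_\Sigma\co \Spec\sO_{F,\Sigma}\hookrightarrow \overline{\Spec\sO_F}$, and likewise for each finite $S$. The localisation triangle for $\Spec\sO_{F,\Sigma}\hookrightarrow\Spec\sO_{F,S_0}$, with closed complement $\coprod_{v\in\Sigma-S_0}\Spec\kappa(v)$, reads
\[
 \oR\Gamma_c(G_{F,\Sigma},U)\to \oR\Gamma_c(G_{F,S_0},U)\to \prod_{v\in\Sigma-S_0}\oR\Gamma_{\ur}(G_v,U)\xra{+1},
\]
where $\oR\Gamma_{\ur}(G_v,U):=\oR\Gamma(G_v/I_v,U)=\oR\Gamma(\kappa(v),U)$ since $U$ is unramified at $v\notin S_0$. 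The analogous triangles for finite $S$ with $S_0\subseteq S\subseteq\Sigma$ identify $\oR\Gamma_c(G_{F,S},U)$ with $\cocone(\oR\Gamma_c(G_{F,S_0},U)\to\prod_{v\in S-S_0}\oR\Gamma_{\ur}(G_v,U))$, and their homotopy limit over $S$ recovers the displayed triangle, since the finite products $\prod_{v\in S-S_0}\oR\Gamma_{\ur}(G_v,U)$, linked by projection maps, have homotopy limit $\prod_{v\in\Sigma-S_0}\oR\Gamma_{\ur}(G_v,U)$ (a routine $\oR\Lim$ computation). Finally, to match Definition \ref{cptCC}'s cocone $\cocone(\oR\Gamma(G_{F,\Sigma},U)\to\oR\Gamma(G_{\bA_F^{\in\Sigma}},U))$ with this étale description, one identifies the two ``cones'' measuring the difference from the $S_0$-term: on the global side the local cohomology supported at $v\notin S_0$ is $\oR\Gamma_{\ur}(G_v,U^{\vee}(1))^{\vee}[-2]$, while on the adelic (restricted-product) side it is $\oR\Gamma(G_v,U)/\oR\Gamma_{\ur}(G_v,U)$, and these agree by unramified local Tate duality at the good places.

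I expect the main obstacle to be precisely this last bookkeeping step: checking that the restricted-product conventions of Definitions \ref{adeliccohodef1}--\ref{adeliccohodef3} and the cocone of Definition \ref{cptCC} are compatible with passage to the homotopy limit over finite $S$, i.e.\ that the ``unramified tail'' $\prod_{v\in\Sigma-S}\oR\Gamma_{\ur}(G_v,U)$ appearing on the compactly supported side corresponds under duality to the growth of $\oR\Gamma(G_{F,S},U^{\vee}(1))$ as $S$ enlarges. Once the étale/localisation description is in place, everything else is formal manipulation of derived limits, filtered colimits and Pontryagin duals, together with the conventions of Lemma \ref{PTfinitelemma} regarding archimedean places, which are inherited unchanged.
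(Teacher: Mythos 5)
Your reduction from profinite $U$ to finite $U$ (writing $U=\oR\Lim_i U_i$, using that $(-)^{\vee}$ turns the filtered colimit $U^{\vee}(1)=\LLim_i U_i^{\vee}(1)$ into an inverse limit, and commuting $\cocone$ with $\oR\Lim$) is essentially identical to the paper's proof, which does exactly this. The divergence is in what you do for finite $U$: you take the extra step of reducing infinite $\Sigma$ to finite $S$ via $\oR\Gamma(G_{F,\Sigma},U^{\vee}(1))\simeq\LLim_S\oR\Gamma(G_{F,S},U^{\vee}(1))$, Lemma \ref{PTfinitelemma}, and a localisation-triangle computation identifying $\oR\Gamma_c(G_{F,\Sigma},U)$ with $\holim_S\oR\Gamma_c(G_{F,S},U)$. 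This is unnecessary: the reference the paper invokes, Milne's \emph{Arithmetic Duality Theorems} I.4.10, already states Poitou--Tate duality for finite modules over $G_{F,\Sigma}$ with $\Sigma$ an arbitrary (possibly infinite) set of places containing the support of $|M|$, so for finite $U$ one can simply cite it and stop. Your route would presumably work, but it shifts the difficulty into exactly the bookkeeping you flag as ``the main obstacle'': matching the restricted-product formalism of Definitions \ref{adeliccohodef1}--\ref{adeliccohodef3} with the localisation triangle. Note that as written your triangle has a genuine wrinkle --- the third term should be the compactly supported cohomology of $\coprod_{v\in\Sigma-S_0}\Spec\kappa(v)$, which for an \emph{infinite} disjoint union is $\bigoplus_{v}\oR\Gamma(\kappa(v),U)$ rather than the product $\prod_v$ you wrote (compactly supported sections have support meeting only finitely many components), and neither is literally the restricted product appearing in $\oR\Gamma(G_{\bA_F^{\in\Sigma}},-)$ once one accounts for the placewise inflation maps $\oR\Gamma(G_v/I_v,U)\to\oR\Gamma(G_v,U)$. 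So the comparison with Definition \ref{cptCC} would require more care than the sketch suggests. The moral is that invoking Milne's general-$\Sigma$ statement is exactly what lets the paper avoid all of this: the only reduction needed is the profinite-to-finite one, which you already do correctly.
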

\begin{proof}
When $U$ is finite, this is essentially the  Poitou--Tate duality of   \cite[1.4.10]{milneArithDuality}. In general, writing $U=\Lim_{\alpha}U_{\alpha}$ for $U_{\alpha}$ finite, we have
\begin{align*}
 \oR\Gamma_c( G_{F,\Sigma},U) &\simeq \oR\Lim_{\alpha}\oR\Gamma_c( G_{F,\Sigma},U_{\alpha})\\
&\simeq \oR\Lim_{\alpha} \oR\Gamma( G_{F,\Sigma},U^{\vee}_{\alpha}(1))^{\vee}[-3]\\
&\simeq (\LLim_{\alpha} \oR\Gamma( G_{F,\Sigma},U^{\vee}_{\alpha}(1)))^{\vee}[-3]\\
&= \oR\Gamma( G_{F,\Sigma},U^{\vee}(1))^{\vee}[-3].
\end{align*}
\end{proof}

\begin{remark}
 If we wanted to extend Lemma \ref{PTinfinitelemma} to more general coefficients, we would have to pass to a larger category than the category $\cT\cT$ of locally compact topological torsion groups. The category $\cT\cT$ precisely consists of the Tate objects over the category of finite abelian groups in the sense of \cite{BraunlingGroechenigWolfsonTate}. Since $\oR\Gamma(G_{F,\Sigma},-)$ and $\oR\Gamma_c(G_{F,\Sigma},-)$ are functors from finite groups to complexes of Tate objects, their natural extension to  coefficients in $\cT\cT$ will take values in complexes of $2$-Tate objects over finite abelian groups (or equivalently Tate objects over $\cT\cT$), and Poitou--Tate duality will extend formally to that category.
\end{remark}

\subsubsection{Non-abelian reciprocity laws}\label{recexamplessn}

We may now adapt all the examples from \S \ref{rationalsn} to obtain obstructions to  ad\'elic points being rational points, with terms in the spectral sequence given by Galois cohomology $\H^*_c(G_{F,\Sigma},-)$ with compact supports. Since the coefficients we consider have negative weights,  the lower cohomology groups with compact supports tend to be small; when they vanish,  the obstruction towers have no ambiguity in the lift at each stage.

\begin{example}[Nilpotent completion of $\pi_1^{\et}(\bar{X})$]\label{nilpex3} 
 If $X$ is a Deligne--Mumford stack over $F$, and $\bar{X}=X\ten_{F}\bar{F}$, with some geometric point $\bar{x}$, then as in Examples \ref{nilpex1} and \ref{nilpex2} we may consider the lower central series 
\[
 \Pi_n:= \pi_1^{\et}(X,\bar{x})/[\pi_1^{\et}(\bar{X}, \bar{x})]_{n+1},
\]
where we write $[\pi]_1:=\pi$, and $[\pi]_{k+1}$ for the closure of $[\pi, [\pi]_k]$. Write $\Pi_{\infty}= \Lim_n \Pi_n$.

We then define  the tower $ \ldots X(\bA_F )_n \to X(\bA_F)_0= X(\bA_F)$ by the homotopy fibre products
\[
X(\bA_F)_n:= X(\bA_F)\by^h_{\map_{BG_{F}}(BG_{ \bA_F}, B\Pi_n ) }\map_{BG_{F}}(BG_{F}, B\Pi_n ),
\]
defined using the morphism $X(\bA_F)\to {\map_{BG_{F}}(BG_{ \bA_F}, B\Pi_{\infty} ) } $ from \S \ref{adelicmapsn}.

Taking homotopy fibres of the fibration sequences in    \S \ref{nilpsn}, we then get  a non-abelian spectral sequence 
\[
 E_1^{s,t}= \begin{cases}
             \H^{1+s-t}_c( G_{F}, [\bar{\pi}]_s/[\bar{\pi}]_{s+1} )& s \ge 1 \\
 \pi_{t}X(\bA_F) & s=0
            \end{cases}
  \abuts  \pi_{t-s} X(\bA_F)_{\infty} 
\]
of groups and sets, where we write $\bar{\pi}:= \pi_1^{\et}(\bar{X}, \bar{x})$. This comes from the exact couple
\[
  \xymatrix{
 \ldots   \ar[r] & \pi_*X( \bA_F)_s  \ar[r] &  
 \ldots \ar@{-->}[ld]^{\delta} \ar[r] & \pi_*X(\bA_F)_1 \ar[r] &\pi_*X(\bA_F)_0\ar@{-->}[ld]^{\delta}\\
  &		\H^{1-*}_c(G_{F}, [\bar{\pi}]_s/[\bar{\pi}]_{s+1} )  \ar[u]  & 
\ldots 
 & \H^{1-*}_c( G_{F}, \bar{\pi}/[\bar{\pi}]_2) \ar[u] &  \pi_*X(\bA_F) \ar@{=}[u],
  }
 \]
with $\delta$ of cohomological degree $+1$. 

 As in \S \ref{rationalsn}, we have a map $X(F) \to X(\bA_F)_{\infty}$, so the spectral sequence gives  obstructions to an ad\'elic point being rational. When $X$ is a scheme (or algebraic space), $\pi_0X(\bA_F)= \pi_*X(\bA_F)$ and $\pi_i (\bA_F)=0$ for $i>0$. 

By Lemma \ref{PTinfinitelemma}, 
 $\H^{1+s-t}_c( G_{F}, [\bar{\pi}]_s/[\bar{\pi}]_{s+1} )$ is isomorphic  to $\H^{2+t-s}(G_{F}, ([\bar{\pi}]_s/[\bar{\pi}]_{s+1} )^{\vee}(1))^{\vee}$. Thus  elements of $ \H^{1}(G_{F}, ([\bar{\pi}]_s/[\bar{\pi}]_{s+1} )^{\vee}(1))$ give obstructions to lifting points in $\pi_0X(\bA_F)$ to $ X(F)$, and the ambiguities of the lifts at each stage are dual to the groups $\H^{2}(G_{F}, ([\bar{\pi}]_s/[\bar{\pi}]_{s+1} )^{\vee}(1))$, which are often finite for weight reasons as in \cite{jannsenweights}. The higher homotopy groups $\pi_{\ge 2} X(\bA_F)_n$ are necessarily $0$, by vanishing of $\H^{\le 0}_c$.
\end{example}

\begin{remark}
 Since $[\bar{\pi}]_{n}/[\bar{\pi}]_{n+1}$ is contained in the centre of $ \bar{\pi}/[\bar{\pi}]_{n+1}$, it seems that the spectral sequence in Example \ref{nilpex3} can alternatively be obtained as an inverse  limit of the non-abelian Poitou--Tate exact sequence of  \cite[Theorem 168]{stixRationalPoints}.
\end{remark}

\begin{example}[Unipotent completion of $\pi_1^{\et}(\bar{X})$]\label{unipex3}
Examples \ref{unipex1} and \ref{unipex2} adapt along the lines of Example \ref{nilpex3}. 
Take  a smooth Deligne--Mumford $X$ over $\sO_{F,\Sigma}$ admitting a smooth relative compactification, with $\bar{X}:=X\ten_{\sO_{F,\Sigma}}\sO_{\bar{F},\Sigma}$ and $\bar{x}$ a geometric point.  Assume that we have a point $x \in X(\sO_{F,\Sigma})$ under $\bar{x}$ (if not, there are analogous statements using a $G_{F,\Sigma}$-equivariant set $\cB$ of basepoints instead).

Now set
\[
 \Pi_n:= G_{F,\Sigma} \ltimes  (\pi_1^{\et}(\bar{X},\bar{x})\ten \Ql)/[\pi_1^{\et}(\bar{X}, \bar{x})\ten\Ql]_n),
\]
and 
\[
X(\bA_F^{\in \Sigma})_n:= X(\bA_F^{\in \Sigma})\by^h_{\map_{BG_{F,\Sigma}}(BG_{ \bA_F^{\in \Sigma}}, B\Pi_n ) }\map_{BG_{F,\Sigma}}(BG_{F,\Sigma}, B\Pi_n ),
\]
to give a   non-abelian spectral sequence 
\[
 E_1^{s,t}= \begin{cases} \H^{1+s-t}_c(G_{F,\Sigma}, [\bar{\pi}\ten \Ql]_s/[\bar{\pi}\ten \Ql]_{s+1} ) & s \ge 1 \\
 \pi_{t}X(\bA_F^{\in \Sigma}) & s=0
            \end{cases}
\abuts  \pi_{t-s}X(\bA_F^{\in \Sigma})_{\infty}
\]
of groups and sets, where we write $\bar{\pi}:= \pi_1^{\et}(\bar{X}, \bar{x})$.

Lemma \ref{PTinfinitelemma} shows that $ \H^{1+s-t}_c(G_{F,\Sigma}, [\bar{\pi}\ten \Ql]_s/[\bar{\pi}\ten \Ql]_{s+1} )$ is isomorphic to $\H^{2+t-s}(G_{F,\Sigma}, ([\bar{\pi}]_s/[\bar{\pi}]_{s+1} )^{\vee}(1))^{\vee}\ten\Ql$. Since $X$ is smooth, the space $[\bar{\pi}\ten \Ql]_s/[\bar{\pi}\ten \Ql]$ is a pro-finite-dimensional Galois $\Ql$-representation of negative weights, 
so  the local monodromy weight conjectures (as in the Poitou--Tate dual form of \cite[Conjecture 6.3]{jannsenweights}) 
would imply $E^1_{s,s}=0$ for $s>0$, 
with the exact couple yielding the spectral sequence then degenerating to   exact sequences
 \begin{align*}
0 \to   \pi_0 X(\bA_F^{\in \Sigma})_{(n)} \to \pi_0 X(\bA_F^{\in \Sigma})_{(n-1)} \to \H^2_c(G_{F, \Sigma}, [\bar{\pi}\ten \Ql]_s/[\bar{\pi}\ten \Ql]_{s+1}), 
 \end{align*}
so the tower becomes a sequence of subsets.
\end{example}

\begin{example}[Modular forms of level $1$]\label{modularex4}
As in Examples \ref{modularex2} and \ref{modularex3}, let
  $X= \cM_{1,1}$ be the stacky modular curve, take $x \in X(\sO_{F,\Sigma})$, and 
set
\[
 \Pi_n :=  G_{F,\Sigma} \ltimes( (\SL_2(\Z)^{\SL_2, \mal}/[\Ru]_{n+1})\by_{\SL_2(\Ql)}\SL_2(\Zl)),
\]
where $\SL_2$ is here regarded as an algebraic group over $\Ql$.

We now write 
\[
 X(\bA_F^{\in \Sigma})_n:= X(\bA_F^{\in \Sigma})\by^h_{\map_{BG_{F,\Sigma}}(BG_{ \bA_F^{\in \Sigma}}, B\Pi_n ) }\map_{BG_{F,\Sigma}}(BG_{F,\Sigma}, B\Pi_n ).
\]
 Since $\Pi_0 =  G_{F,\Sigma} \ltimes \SL_2(\Zl)$, the space $ X(\bA_F^{\in \Sigma})_0$ consists of pairs $(x, \Lambda)$ with $x$ an ad\'elic point and $\Lambda$ a $G_{F,\Sigma}$-representation of rank $2$ over $\Zl$ with determinant $\Zl(1)$, together with an isomorphism $T_{\ell}E_{\bar{x}} \cong \Lambda$ of $BG_{ \bA_F^{\in \Sigma}}$-representations. 

Writing $L_s:= \CoLie_s (\bigoplus_m \H^1(\SL_2(\Z), V_m ) \ten S^m(\Lambda)(-m))$, Proposition \ref{abpropunip} and Examples \ref{SL2ex1} then  give a non-abelian spectral sequence 
\[
 E_1^{s,t}= \begin{cases}
             \H^{1+s-t}_c(G_{F, \Sigma}, L_s^*) &  s \ge 1 \\
 \pi_{t}X(\bA_F^{\in \Sigma})_0 & s=0
            \end{cases}
 \abuts  \pi_{t-s} X(\bA_F^{\in \Sigma})_{\infty}.
\]

In other words, given a global Galois representation $\Lambda$ and, for each $v \in \Sigma$,  a  local elliptic curve $E_v$ lifting each underlying $G_v$-representation, with constraints on ramification,      these sequences give a tower of obstructions to lifting $(\Lambda, \{E_v\}_{v\in \Sigma})$ to an elliptic curve $E$ over $ \sO_{F,\Sigma}$ with Tate module $T_{\ell}E(\bar{F})\ten\Q \cong \Lambda\ten \Q$ and localisations $E_v$; the sequences also characterise the ambiguity of the lift at each stage.

 As in Example \ref{SL2ex1}, the group $\H^1(\Gamma,V_m)(-m)$ consists of modular forms and cusp forms of weight $m+2$ and level $1$. 
Thus  $L_s$ is  a Galois $\Ql$-representation of weights $\ge s$, so it follows that $L_s^*$ is a pro-finite-dimensional Galois $\Ql$-representation of weights $\le -s$.  As in Example \ref{unipex3}, the local monodromy weight conjectures would cause the 
 exact couple yielding the spectral sequence to degenerate to the  exact sequences
 \begin{align*}
0 \to   \pi_0 X(\bA_F^{\in \Sigma})_{(n)} \to \pi_0 X(\bA_F^{\in \Sigma})_{(n-1)} \to \H^2_c(G_{F, \Sigma}, L_s^*) 
 \end{align*}
equipped with a map $X(\sO_{F,\Sigma}) \to  X(\bA_F^{\in \Sigma})_{(\infty)}$. 
\end{example}

As in Examples \ref{SL2ex1}, there is an entirely similar treatment for congruence subgroups $\Gamma \le {\SL_2(\Z)}$, replacing $\cM_{1,1} $ with the modular curve $Y_{\Gamma}$.
If we instead started from a representation $\Lambda$ over $\hat{\Z}$, relative Malcev completion of ${\SL_2(\Z)}$ over  $\SL_2 \by \SL_2(\hat{\Z})$ as in Example \ref{modularex2b} would give rise to reciprocity laws associated to modular forms of all levels. Meanwhile, relative Malcev completion of ${\SL_2(\Z)}$ over  $\SL_2(\hat{\Z})$ as in Example \ref{modularex2c} gives rise to reciprocity laws associated to weight $2$ modular forms of all levels. 

\begin{remark}\label{modularBM}
 We may write 
\[
 \H^{i}_c(G_{F, \Sigma}, L_s^*) \cong  \Lie(n)\ten^{S_n} \H^{i}_c(G_{F, \Sigma},((\bigoplus_m \H^1(\SL_2(\Z), V_m ) \ten S^m(\Lambda)(-m))^{\ten n})^*)
\]
 As in Example \ref{modularex2}, we may then consider the sheaf $\bT_{\ell}$ of relative Tate modules on $Y_{\Gamma}$, with $\oR q_* \bT_{\ell}\ten \Q[1] \simeq \oR^1 q_*\bT_{\ell}\ten \Q \cong \H^1(\SL_2(\Z), V_m )$, for the structure map $q \co Y_{\Gamma} \to \Spec \sO_{F,\Sigma}$. 
Applying Poitou--Tate duality in the form of Lemma \ref{PTinfinitelemma} to this $\Zl$-lattice then gives
\begin{align*}
  &\oR\Gamma_c(G_{F, \Sigma},((\bigoplus_m \H^1(\SL_2(\Z), V_m ) \ten S^m(\Lambda)(-m))^{\ten s})^*)\\
&\simeq \oR\Gamma(G_{F, \Sigma}, (\bigoplus_{m\ge 1} \oR q_*S^m\bT_{\ell} \ten S^m(\Lambda)(-m))^{\ten s}\ten \mu_{\ell^{\infty}})^{\vee}\ten \Q[3+s]\\
&\simeq \oR\Gamma(X^s, \bigotimes_{i=1}^s (\bigoplus_{m\ge 1}\pr_i^*S^m\bT_{\ell}\ten q^*S^m(\Lambda)(-m))\ten \mu_{\ell^{\infty}})^{\vee}\ten\Q[3+s],
\end{align*}
providing an expression for $E_1^{s,t}$ as a summand of $\H^{2+t}(X^s, \cdots\ten \mu_{\ell^{\infty}} )^{\vee}\ten \Q$. As we will see in Example \ref{exhtpyBrauerManin}, the $s=1$ case is a part of the Brauer--Manin obstruction, divisible elements in cohomology giving rise to obstructions.

By \cite{HelmVoloch}, for $\Sigma$ cofinite and $\rho\co  G_{F, \Sigma} \to \SL_2(\hat{\Z})$, non-emptiness of $\cM_{1,1}(\bA_{\Q}^{\in \Sigma})_{\rho}$ implies non-emptiness of $X(\Z_{\Sigma})_{\rho}$. The variants of Example \ref{modularex4} for relative Malcev completions of ${\SL_2(\Z)}$ over  $\SL_2(\hat{\Z})$ or over $\SL_2(\Ql) \by \SL_2(\hat{\Z})$ should then help to identify $X(\Z_{\Sigma})_{\rho} \subset \cM_{1,1}(\bA_{\Q}^{\in \Sigma})_{\rho} $. 
\end{remark}



\begin{example}[Relative Malcev \'etale homotopy types]\label{exhtpy3}
As in Examples \ref{exhtpy} and  \ref{exhtpy2}, we may consider \'etale homotopy types in place of fundamental groups. Take a smooth Deligne--Mumford stack $X$  over $\sO_{F,\Sigma}$ admitting a smooth relative compactification, a geometric point $\bar{x}$ and a Zariski-dense representation $\rho\co  \pi_1^{\et}(X,\bar{x}) \to S(\Ql)$ to a  pro-reductive pro-algebraic group $S$, let $R$ be the Zariski closure of $\rho(\pi_1^{\et}(\bar{X},\bar{x}))$, and set $T:= S/R$. 

Now set $\Pi_n$ to be the simplicial topological group given by the homotopy fibre product
\[
\Pi_n:= (G(X_{\et}, \bar{x})^{S,\mal}/[U]_{n+1})\by^h_{G(BG_{F,\Sigma})^{T,\mal} }G_{F,\Sigma},
\]
where $U= \Ru G(\bar{X}_{\et}, \bar{x})^{R,\mal}$. 

The formula of Example \ref{modularex4} 
then gives a  tower of spaces $\{X(\bA_F^{\in \Sigma})_n\}_n$
and an associated non-abelian spectral sequence 
\[
 E_1^{s,t} \abuts  \pi_{t-s} (X(\bA_F^{\in \Sigma})\by^h_{\map_{B(G_{F,\Sigma})}(BG_{ \bA_F^{\in \Sigma}}, \bar{W}\Pi_{\infty} )} \map_{B(G_{F,\Sigma})}(BG_{F,\Sigma}, \bar{W}\Pi_{\infty} )),
\]
with
\[ 
 E_1^{s,t}= \begin{cases}
             \bH^{1+s-t}_c( G_{F,\Sigma}, [U]_s/[U]_{s+1}) & s \ge 1 \\
 \pi_{t} X(\bA_F^{\in \Sigma})_0 
& s=0,
            \end{cases}
\]
where $[U]_s/[U]_{s+1}$ is dual to  $ \CoLie_s ((\oR\Gamma(\bar{X}, O(R))/\Ql)[1])$
 and 
\[
 X(\bA_F^{\in \Sigma})_0=(X(\bA_F^{\in \Sigma})\by^h_{\map_{BT}( BG_{\bA_F^{\in \Sigma}}, BS)}\map_{BT}(B(G_{F,\Sigma}), BS).
\]
\end{example}

\begin{remark}
As in \cite[Theorem \ref{weiln-laff}]{weiln}, Lafforgue's theorem and Esnault--Kerz (\cite[Theorem VII.6 and Corollary VII.8]{La} and \cite{EsnaultKerz}) imply that the ind-lisse sheaf 
 $\rho^{-1}O(R)$ on $\bar{X}$ is pure of weight $0$. If $\bar{X}$ is smooth and proper, \cite[Corollary \ref{weiln-wgtexistspin}]{weiln} then implies that the group $ \H^{-i}([U]_s/[U]_{s+1})$ in Example \ref{exhtpy3} is pure of weight $-i-s$. 

The obstruction spaces for \'etale homotopy sections $\pi_0X(\bA_F^{\in \Sigma})_{(\infty)}$ are given in the spectral sequence by the terms $E_1^{s,s-1}$.
Assuming that $\rho$ is of geometric origin, the local monodromy weight conjectures (as in \cite[Conjecture 6.3]{jannsenweights}) 
 would imply that the groups $\H^1_c$ vanish, so the  only non-trivial contributions to $E_1^{s,s-1}$ come from  
\[
 \H^{2}_c( G_{F,\Sigma}, (\CoLie_s\H^{1}(\bar{X},O(R)))^*)
\]
 as in Example \ref{unipex3}, and from 
\[
\H^{3}_c( G_{F,\Sigma},  (\H^{2}(\bar{X},O(R))\ten \CoLie_{s-1}\H^{1}(\bar{X},O(R)))^*).
\]

The latter group can only be only non-zero for $s=1$, when $\H^2(\bar{X},O(R))$ contains copies of the Tate motive, in which case the reciprocity map is detecting the Brauer--Manin obstruction of a pro-\'etale covering whose geometric fibres are $\rho(\pi_1^{\et}(\bar{X},\bar{x}))$-torsors as in Example \ref{exhtpyEtBrauerManin} below. These copies of the Tate motive then generate a large contribution $\H^{2}_c( G_{F,\Sigma}, \H^2(\bar{X},O(R))^*)$ to the  $E_1^{1,1}$ term, producing an ambiguity in the lift much larger than the new obstruction, meaning the map $X(\bA_F^{\in \Sigma})_1 \to X(\bA_F^{\in \Sigma})$ would then be far from  injective.

\end{remark}


\subsubsection{Brauer--Manin obstructions}\label{BMsn}

We now look at  Example \ref{exhtpy3} and analogous completions of \'etale homotopy types, giving rise to obstruction towers refining the non-abelian reciprocity laws by incorporating higher homotopical information.
A common feature is that the first obstruction map in the tower is just the Brauer--Manin obstruction, or related (pro-)\'etale refinements in the case of relative completion. Because the higher obstructions induce non-abelian reciprocity laws, they will be non-trivial in any case where the higher reciprocity maps of \cite{narec1} are non-zero on the relevant Brauer--Manin set. 

If $O(R)_{\Zl}$ is a $\pi_1^{\et}(\bar{X})$-equivariant $\Zl$-form for the ring $O(R)$ of functions on the reductive group featuring in Example \ref{exhtpy3}, then we may use Poitou--Tate duality to rewrite the term $E_1^{1,t}$ as  
\begin{align*}
 \bH^{2-t}_c( G_{F,\Sigma}, [U]_1/[U]_2 ) &\cong \bH^{2+t}( G_{F,\Sigma},\oR\Gamma(\bar{X}, O(R)_{\Zl}\ten \mu_{\ell^{\infty}})/\mu_{\ell^{\infty}} )^{\vee}\ten \Q\\
 &\cong \begin{cases}
	  \H^{2}_{\et}( X, O(R)_{\Zl}\ten \mu_{\ell^{\infty}})/\H^2(G_{F,\Sigma},\mu_{\ell^{\infty}})^{\vee} & t=0\\
         \H^{2+t}_{\et}( X, O(R)_{\Zl}\ten \mu_{\ell^{\infty}})^{\vee} & t>0;
        \end{cases}
 \end{align*}
when $R=1$ (unipotent completion of the geometric fibre), we have $O(R)_{\Zl}=\Zl$, and the first obstruction map $d_1 \co E_1^{0,0} \to E_1^{1,0}$ is the rationalised Brauer--Manin obstruction
\[
 \pi_0X(\bA_F^{\in \Sigma})\to (\H^{2}_{\et}( X, \mu_{\ell^{\infty}})/\H^2(G_{F,\Sigma},\mu_{\ell^{\infty}}))^{\vee}\ten \Q.
\]

\begin{remark}\label{LieBMremark}
 We may write $E_1^{s,t}$ as cohomology of a complex defined in terms of the Lie operad and the complexes $ \oR\Gamma( X^n, \mu_{\ell^{\infty}})^{\vee}\ten\Q$ for $n \le s$. 
In particular, 
\[
E_1^{2,t} \cong \H^{2+t}\Tot(\oR\Gamma(X^2, \mu_{\ell^{\infty}})^{\vee}\ten \Q/S_2 \xra{\pr_{1*}-\pr_{2*}} \oR\Gamma(X, \mu_{\ell^{\infty}})^{\vee}\ten \Q), 
\]
where $S_2$ acts by switching the factors in $X^2$. For $R \ne 1$,  the expression in Remark \ref{modularBM} for modular curves generalises whenever $\H^{>0}(\bar{X},\Ql)=0$, but usually there are extra factors reflecting the difference between reduced and non-reduced cohomology.
\end{remark}

Taking nilpotent completion instead of unipotent completion gives the following:

\begin{example}[\'Etale homotopy types and the Brauer--Manin obstruction]\label{exhtpyBrauerManin}
Take a smooth Deligne--Mumford stack $X$  over $\sO_{F,\Sigma}$ admitting a smooth  relative compactification, and a geometric point $\bar{x}$. 
Applying  relative pro-$\Sigma$  completion over $G_{F,\Sigma}$ levelwise (cf. \cite[\S \ref{weiln-profinitesn}]{weiln}) 
 to  the pro-simplicial group $G(X_{\et}, \bar{x})$   of Example \ref{exhtpy} gives a  pro-(finite simplicial group) $\hat{G}(X_{\et}, \bar{x})$ as in the proof of Proposition \ref{abprop}; up to homotopy, this is independent of the choices made, by \cite[Proposition \ref{weiln-Lcohoweak}]{weiln}. When  $\Sigma$ is the set of all primes (corresponding to $\sO_{F,\Sigma}=F$), note that $\hat{G}(X_{\et}, \bar{x})$ is just the profinite completion of $G(X_{\et}, \bar{x})$. 

We  now refine Example \ref{nilpex3} by  considering  relative   pro-nilpotent completions of the whole profinite homotopy type $\hat{G}(X_{\et}, \bar{x})$ instead of the fundamental group. For completions relative to  $G_{F,\Sigma}$,  we set  $K:= \ker(\hat{G}(X_{\et}, \bar{x})\to G_{F,\Sigma})$ and 
\[
\Pi_n:= \hat{G}(X_{\et}, \bar{x})/[K]_{n+1}, 
\]
which is a pro-(finite simplicial group). 

We then construct a tower $ \ldots \to X(\bA_F^{\in \Sigma} )_1 \to  X(\bA_F^{\in \Sigma})_0= X(\bA_F^{\in \Sigma})$ of homotopy fibre products
\[
X(\bA_F^{\in \Sigma})_n:= X(\bA_F^{\in \Sigma})\by^h_{\map_{BG_{F,\Sigma}}(BG_{ \bA_F^{\in \Sigma}}, \bar{W}\Pi_n ) }\map_{BG_{F,\Sigma}}(BG_{F,\Sigma}, \bar{W}\Pi_n ),
\]
defined using the morphism $X(\bA_F^{\in \Sigma})\to {\map_{BG_{F,\Sigma}}(BG_{ \bA_F^{\in \Sigma}}, B\Pi_{\infty} ) } $ from \S \ref{adelicmapsn} and Corollary \ref{adelemapcor}.

This gives a non-abelian spectral sequence 
\[
 E_1^{s,t}= \begin{cases}
             \bH^{1+s-t}_c( G_{F,\Sigma}, [K]_s/[K]_{s+1} )& s \ge 1 \\
 \pi_{t}X(\bA_F^{\in \Sigma}) & s=0
            \end{cases}
  \abuts  \pi_{t-s} X(\bA_F^{\in \Sigma})_{\infty}, 
\]
where we regard the simplicial abelian groups $ [K]_s/[K]_{s+1}$ as chain complexes. 

Nielsen--Schreier implies that the simplicial group $K$ is given levelwise by profinite completions of free groups, so the $s=1$ term is given by 
$
 [K]_1/[K]_2 \simeq (G(\bar{X}_{\et}, \bar{x})^{\wedge_{\Sigma}})^{\ab}, 
$
which is just the reduced homology complex of $\bar{X}$ with $\prod_{\ell}\Zl$ coefficients, where the product runs over those primes $\ell$ which are units in $\sO_{F,\Sigma}$.
Poitou--Tate duality in the form of Lemma \ref{PTinfinitelemma} applied to the complexes $ \hat{G}(\bar{X}_{\et}, \bar{x})^{\ab}$ thus gives 
\begin{align*}
 \bH^{2-t}_c( G_{F,\Sigma}, [K]_1/[K]_2 ) 
 &\cong \begin{cases}
	  \prod_{\ell}(\H^{2}_{\et}( X,  \mu_{\ell^{\infty}})/\H^2(G_{F,\Sigma}, \mu_{\ell^{\infty}}))^{\vee} & t=0\\
        \prod_{\ell} \H^{2+t}_{\et}( X,  \mu_{\ell^{\infty}})^{\vee} & t>0,
        \end{cases}
 \end{align*}
where $\ell$ runs over all primes which are units in $\sO_{F,\Sigma}$ and we follow the usual convention for continuous cohomology, regarding $\mu_{\ell^{\infty}}$ as the ind-sheaf $\LLim_{n\in \N}  \mu_{\ell^n}$.

If we set $\Br_{\Sigma}(X):= \im(\bigoplus_{\ell}\H^{2}_{\et}( X,  \mu_{\ell^{\infty}}) \to \H^{2}_{\et}( X, \bG_m))$ to be the $\Sigma$-torsion cohomological Brauer group, then 
 the first obstruction map $d_1 \co E_1^{0,0} \to E_1^{1,0}$ is thus the map
\[
 \pi_0X(\bA_F^{\in \Sigma})\to \prod_{\ell}(\H^{2}_{\et}( X,  \mu_{\ell^{\infty}})/\H^2(G_{F,\Sigma}, \mu_{\ell^{\infty}}))^{\vee},
\]
induced by the natural map 
%
$
 \mathrm{BM}_{\Sigma} \co \pi_0X(\bA_F^{\in \Sigma})\to \Br_{\Sigma}(X)^{\vee},
$
which is just the Brauer--Manin obstruction 
of \cite{ManinBrauer} when  $\sO_{F,\Sigma}= F$.


Writing $\pi_0X(\bA_F^{\in \Sigma})^{\Br_{\Sigma}}$ for the kernel of $\mathrm{BM}_{\Sigma}$, we thus have
\[
 \pi_0X(\bA_F^{\in \Sigma})^{\Br_{\Sigma}}= \im(\pi_0X(\bA_F^{\in \Sigma} )_1 \to \pi_0X(\bA_F^{\in \Sigma} ) )
\]
for the tower above, and the later pages of the spectral sequence give obstructions to lifting further up the tower. Beware, however, that when $E_1^{s,s}\ne 0$, the lifts are not unique at each stage; in particular if a point lies in the kernel of $\mathrm{BM}_{\Sigma}$, we have a  $\prod_{\ell} \H^{3}_{\et}( X, \mu_{\ell^\infty})^{\vee}$-torsor of possible choices on which to apply the secondary obstruction. 

 When $X$ is an algebraic space rather than a stack, we have $\pi_0X(\bA_F^{\in \Sigma} )=X(\bA_F^{\in \Sigma} )$, and may simply write $X(\bA_F^{\in \Sigma})^{\Br_{\Sigma}} $ for the image of $\pi_0X(\bA_F^{\in \Sigma} )_1$.
\end{example}

\begin{remark}\label{LieBMrmk2}
Because the simplicial pro-group $K$ of Example \ref{exhtpyBrauerManin} is given levelwise by pro-$\Sigma$ completions of free groups, the Magnus embedding (applied to profinite groups as in \cite{wickelgrenLCS}) gives an isomorphism $[K]_s/[K]_{s+1} \cong \hat{\Lie}_s(K^{\ab})$, where $\bigoplus_{s\ge 1} \Lie_s$ is the free Lie algebra functor, graded by bracket length, and $\hat{\Lie}_s$ the profinite completion of $\Lie_s$, applied levelwise to the simplicial abelian group. These functors are homotopy invariant when applied to chain complexes of projective modules via the Dold--Kan correspondence, but are not easy to calculate; they give the terms arising in the unstable Adams spectral sequence.

Over $\Q$, the functor $\bigoplus_s \Lie_s$ corresponds via the Dold--Kan correspondence to the free Lie algebra functor on chain complexes.
Thus the spaces $E_1^{s,t}\ten \Q$ are much simpler to describe in terms of free Lie algebras, but they correspond to the obstructions for the unipotent completion of Example \ref{exhtpy3} (with $R=1$). 
\end{remark}

We are now in a position to compare Kim's non-abelian reciprocity laws with the Brauer--Manin obstruction. Restricting to a single prime $\ell$ would give a similar statement for the $\ell$-torsion part of the Brauer--Manin obstruction.  
\begin{proposition}\label{MKBM}
If the natural maps 
\[
 \H^2_{\cts}(\pi_1^{\et}(\bar{X})/[\pi_1^{\et}(\bar{X})]_{n+1}, \Ql/\Zl)\to \H^{2}_{\et}( \bar{X},\Ql/\Zl)
\]
is surjective for all primes $\ell$ which are units in $\sO_{F,\Sigma}$, then
the image of the map $X(\bA_F^{\in \Sigma})_n \to X(\bA_F^{\in \Sigma})$ from Example \ref{nilpex3} is contained in the Brauer--Manin set $X(\bA_F^{\in \Sigma})^{\Br_{\Sigma}}$.
\end{proposition}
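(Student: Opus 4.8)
The plan is to prove that any adelic point $x$ lying in the image of $X(\bA_F^{\in \Sigma})_n\to X(\bA_F^{\in \Sigma})$ has trivial Brauer--Manin obstruction; since Example \ref{exhtpyBrauerManin} identifies the Brauer--Manin locus with the set of $x$ for which $\sum_v\mathrm{inv}_v(\alpha(x_v))=0$ for all $\alpha\in\Br(X)$ (equivalently, with the image of the first stage of the \'etale--homotopy obstruction tower there), this suffices. Write $\bar\pi:=\pi_1^{\et}(\bar X,\bar x)$, $\bar\pi^{(n)}:=\bar\pi/[\bar\pi]_{n+1}$ and $\Pi_n:=\pi_1^{\et}(X,\bar x)/[\bar\pi]_{n+1}$, so that $\Pi_n$ is an extension of $G_{F,\Sigma}$ by $\bar\pi^{(n)}$ and there is a canonical map $\phi\colon X_{\et}\to B\pi_1^{\et}(X,\bar x)\to B\Pi_n$ over $BG_{F,\Sigma}$. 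The key assertion is that, under the hypothesis, every class in $\Br(X)$ is, modulo the subgroup $\Br_0(X)$ of classes inflated from $\Spec\sO_{F,\Sigma}$, the pullback along $\phi$ of a class in $\H^2(\Pi_n,\mu_{\infty})$.

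To see this I would compare the Leray spectral sequence for $X\to\Spec\sO_{F,\Sigma}$, with $E_2^{p,q}=\H^p(G_{F,\Sigma},\H^q_{\et}(\bar X,\mu_{\infty}))$, against the Hochschild--Serre spectral sequence for $1\to\bar\pi^{(n)}\to\Pi_n\to G_{F,\Sigma}\to 1$, with $\tilde E_2^{p,q}=\H^p(G_{F,\Sigma},\H^q(\bar\pi^{(n)},\mu_{\infty}))$, via the map of fibrations over $BG_{F,\Sigma}$ induced by $\phi$. On coefficients the map $\H^q(\bar\pi^{(n)},\mu_{\infty})\to\H^q_{\et}(\bar X,\mu_{\infty})$ is an isomorphism for $q\le 1$ (both sides depend only on $\bar\pi^{\ab}=(\bar\pi^{(n)})^{\ab}$), and for $q=2$ it is surjective: this is the hypothesis $\H^2_{\cts}(\bar\pi^{(n)},\Q/\Z)\onto\H^2_{\et}(\bar X,\Q/\Z)$ twisted by $\mu_{\infty}\cong(\Q/\Z)(1)$, which forces in particular $\H^{\le 2}_{\et}(\bar X,-)\cong\H^{\le 2}(\bar\pi,-)$. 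Running the comparison in total degree $\le 2$ then shows that the image of $\H^2(\Pi_n,\mu_{\infty})\to\H^2_{\et}(X,\mu_{\infty})$ together with the image of $\H^2(G_{F,\Sigma},\mu_{\infty})$ is all of $\H^2_{\et}(X,\mu_{\infty})$; since the image of $\H^2_{\et}(X,\mu_{\infty})\to\H^2_{\et}(X,\bG_m)$ is $\Br(X)$ and the contribution of $\H^2(G_{F,\Sigma},\mu_{\infty})$ lands in $\Br_0(X)$, the key assertion follows.

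Now suppose $x$ is in the image of $X(\bA_F^{\in \Sigma})_n\to X(\bA_F^{\in \Sigma})$. By definition of the homotopy fibre product defining $X(\bA_F^{\in \Sigma})_n$, this means the section $BG_{\bA_F^{\in \Sigma}}\to B\Pi_n$ induced by $x$ extends, over $BG_{F,\Sigma}$, to a global section $\sigma\colon BG_{F,\Sigma}\to B\Pi_n$. Given any $\alpha=\phi^{*}\beta$ with $\beta\in\H^2(\Pi_n,\mu_{\infty})$, and any place $v$, the map induced by $x$ at $v$ factors up to homotopy through $\sigma$, so the evaluation $\alpha(x_v)\in\Br(F_v)$ equals $\mathrm{res}_v(\sigma^{*}\beta)$, where $\sigma^{*}\beta\in\H^2(G_{F,\Sigma},\mu_{\infty})$ is a global class; hence $\sum_v\mathrm{inv}_v(\alpha(x_v))=\sum_v\mathrm{inv}_v(\mathrm{res}_v\,\sigma^{*}\beta)=0$ by global reciprocity --- this being precisely the vanishing of the composite $\H^2(G_{F,\Sigma},\mu_{\infty})\to\H^2(G_{\bA_F^{\in \Sigma}},\mu_{\infty})\to\Q/\Z$ built into the definition of $\oR\Gamma_c(G_{F,\Sigma},-)$. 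Classes in $\Br_0(X)$ contribute $0$ for the same reason. Since the Brauer--Manin pairing is additive and, by the previous paragraph, every element of $\Br(X)$ is a sum of a class pulled back from $\Pi_n$ along $\phi$ and a class in $\Br_0(X)$, the whole Brauer--Manin obstruction vanishes on $x$, so $x$ lies in the Brauer--Manin locus, as required.

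The delicate step is the spectral--sequence comparison of the second paragraph: surjectivity of the coefficient map in degree $2$ does not survive $\H^0(G_{F,\Sigma},-)$ on the nose, so for the ``transcendental'' graded piece $\H^0(G_{F,\Sigma},\H^2_{\et}(\bar X,\mu_{\infty}))$ one must genuinely use the \emph{full} force of the hypothesis (surjectivity onto all of $\H^2_{\et}(\bar X,\Q/\Z)$, not merely onto the part seen by $\bar\pi^{\ab}$), together with the resulting identification $\H^{\le 2}_{\et}(\bar X,-)\cong\H^{\le 2}(\bar\pi,-)$ which makes the two spectral sequences agree through total degree $2$. An alternative route I would keep in reserve is purely homotopical: writing $K:=\ker(\hat G(X_{\et},\bar x)\to G_{F,\Sigma})$, there is a natural span $B\Pi_n\leftarrow \bar W\bigl(\hat G(X_{\et},\bar x)/[K]_{n+1}\bigr)\to \bar W\bigl(\hat G(X_{\et},\bar x)/[K]_{2}\bigr)$, the right-hand object yielding (after forming homotopy fibre products with $X(\bA_F^{\in \Sigma})$) the first stage of the tower of Example \ref{exhtpyBrauerManin}; one then shows that the hypothesis forces the induced map on $\pi_0$ from the space attached to the middle object to Kim's $X(\bA_F^{\in \Sigma})_n$ to be surjective, the only obstruction living in $\H^3_c\bigl(G_{F,\Sigma},\pi_1(\hat G(X_{\et},\bar x)/[K]_{n+1})\bigr)$ and being handled via Poitou--Tate duality (Lemma \ref{PTinfinitelemma}).
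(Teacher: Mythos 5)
Your argument is correct and is essentially the Pontryagin dual of the paper's proof. The paper factors the Brauer--Manin obstruction map $X(\bA_F^{\in\Sigma})\to\bH^2_c(G_{F,\Sigma},\hat G(\bar X_{\et})^{\ab})$ through $\bH^2_c(G_{F,\Sigma},P^{\ab})$ (with $P:=\pi_1^{\et}(\bar X)/[\pi_1^{\et}(\bar X)]_{n+1}$ and $\tilde P$ its free resolution), observes that this composite vanishes on the image of $X(\bA_F^{\in\Sigma})_n$ precisely because a global section of $B\Pi_n\to BG_{F,\Sigma}$ is part of the data in the homotopy fibre product, and then proves the second map injective by Poitou--Tate duality followed by the Leray/Hochschild--Serre comparison. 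You run the same argument on the primal side: instead of injectivity of the map of compact-support groups you prove surjectivity of its Poitou--Tate dual $\H^2(\Pi_n,\mu_\infty)\to\H^2_{\et}(X,\mu_\infty)$ modulo $\Br_0(X)$; and instead of phrasing reciprocity as vanishing in $\H^2_c$, you invoke the sum-of-local-invariants formula directly on the globally defined class $\sigma^*\beta$. The two key inputs --- the global $\Pi_n$-section built into $X(\bA_F^{\in\Sigma})_n$, and the coefficient comparison between the Hochschild--Serre spectral sequence of $\Pi_n$ and the Leray spectral sequence of $X$ controlled by the hypothesis --- are identical; yours reads more concretely while the paper's is slightly more economical, and neither is logically prior.

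One small caution about your ``delicate step'' paragraph. You correctly observe that surjectivity of the coefficient map in degree $2$ does not automatically survive $\H^0(G_{F,\Sigma},-)$, which is exactly the subtlety in the paper's ``will follow from the Leray spectral sequences'' as well. However, the proposed remedy is not quite to the point: the hypothesis, combined with the standard injectivity of $\H^2(\bar\pi,-)\to\H^2_{\et}(\bar X,-)$, does give $\H^{\le 2}_{\et}(\bar X,-)\cong\H^{\le 2}(\bar\pi,-)$, but the spectral sequences you are comparing are those for $X$ and for $B\Pi_n$, and the relevant coefficient map $\H^2(\bar\pi^{(n)},-)\to\H^2_{\et}(\bar X,-)$ is only surjective, not an isomorphism, so the two spectral sequences do not literally agree through total degree $2$. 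What the argument actually uses is that the coefficient maps are isomorphisms for $q\le 1$ and surjective for $q=2$, and that this forces surjectivity on the $E_\infty^{0,2}$-term of the comparison; that step is the same one the paper leaves implicit, and it is worth spelling out rather than appealing to an isomorphism that is not available. Your back-up homotopical route via $\H^3_c(G_{F,\Sigma},\pi_1(\hat G(X_{\et},\bar x)/[K]_{n+1}))$ is reasonable but is not how the paper proceeds.
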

\begin{proof}
Take a free pro-simplicial resolution $\tilde{P}$ of $P:=\pi_1^{\et}(\bar{X})^{\wedge_{\Sigma}}/[\pi_1^{\et}(\bar{X})^{\wedge_{\Sigma}} ]_{n+1}$, and observe that the cofibrancy of $G(\bar{X}_{\et})$ ensures that the natural map $G(\bar{X}_{\et})\to P$ lifts to a map $ G(\bar{X}_{\et})\to\tilde{P}$, unique up to homotopy.

Since a point of $X(\bA)_n$ incorporates the datum of a $P$-valued Galois representation, the
composite map
\[
 X(\bA)_n \to X(\bA) \to \bH^2_c(G_{F,\Sigma}, (G(\bar{X}_{\et})^{\wedge_{\Sigma}})^{\ab}) \to \H^2_c(G_{F,\Sigma} , P^{\ab})
\]
is necessarily $0$. The kernel of the middle map is the $\Sigma$-torsion Brauer--Manin set as in Example \ref{exhtpyBrauerManin}, and 
via Poitou--Tate duality we can rewrite the final map as
\[
 \prod_{\ell}\bH^2(G_{F,\Sigma}, \oR \tilde{\Gamma}_{\et}(\bar{X},  \mu_{\ell^{\infty}}))^{\vee} \to \prod_{\ell}\bH^2(G_{F,\Sigma}, \oR \tilde{\Gamma}_{\cts}(P,  \mu_{\ell^{\infty}}))^{\vee}, 
\]
where $\oR\tilde{\Gamma}$ denotes the reduced cohomology complex.

It suffices to show that this map is injective, or equivalently that its dual is surjective. This will follow from the Leray spectral sequences provided the maps
\[
 \H^i_{\cts}(P, \Ql/\Zl)\to \H^{i}_{\et}( \bar{X},\Ql/\Zl)
\]
are  isomorphisms for $i=1$ and surjective for $i=2$. The first condition is automatic and the second is our hypothesis.
\end{proof}

Considering the relative merits of the higher Brauer--Manin obstructions of Example \ref{exhtpyBrauerManin} and the non-abelian reciprocity laws of Example \ref{nilpex3}, the latter generally avoid ambiguity of lifts to the higher stages of the tower, but converge more slowly.

\begin{example}[\'Etale Brauer--Manin obstructions]\label{exhtpyEtBrauerManin}
While Example \ref{exhtpyBrauerManin} considered completions of the \'etale  homotopy type $\hat{G}(X_{\et}, \bar{x})$ relative to  $G_{F,\Sigma}$, it also makes sense to consider completions with respect to larger quotients  $P$ of $\pi_0\hat{G}(X_{\et}, \bar{x})$ over $G_{F,\Sigma}$ (i.e. relative pro-$\Sigma$ quotients $P$ of  $\pi_1^{\et}(X, \bar{x})$ over $G_{F,\Sigma}$). We can write $K:= \ker (\hat{G}(X_{\et}, \bar{x})\to P)$, and set  $\Pi_n:= \hat{G}(X_{\et}, \bar{x})/[K]_{n+1}$.


As before, we define a tower $\{X(\bA_F^{\in \Sigma})_n\}_n$ by
\[
X(\bA_F^{\in \Sigma})_n:= X(\bA_F^{\in \Sigma})\by^h_{\map_{BG_{F,\Sigma}}(BG_{ \bA_F^{\in \Sigma}}, \bar{W}\Pi_n ) }\map_{BG_{F,\Sigma}}(BG_{F,\Sigma}, \bar{W}\Pi_n );
\]
note that points in $ X(\bA_F^{\in \Sigma})_0$ now include the data of sections of $P \to G_{F,\Sigma}$, because $\Pi_0=P$.
The reasoning of Example \ref{nilpex3} again gives a non-abelian spectral sequence 
\[
 E_1^{s,t}= \begin{cases}
             \bH^{1+s-t}_c( G_{F,\Sigma}, [K]_s/[K]_{s+1})& s \ge 1 \\
 \pi_{t}X(\bA_F^{\in \Sigma})_0 & s=0
            \end{cases}
  \abuts  \pi_{t-s} X(\bA_F^{\in \Sigma})_{\infty}
\]
of groups and sets. 
The terms $\bH^{1+s-t}_c( G_{F,\Sigma}, [K]_s/[K]_{s+1})$ depend on the section $\sigma$ of $P \to G_{F,\Sigma}$ induced by the relevant element of $ \pi_0X(\bA_F^{\in \Sigma})_0$,  
the Galois action then coming from the natural $P$-action on $[K]_s/[K]_{s+1}$.

As in Example \ref{descentex1}, each section $\sigma$ above  gives a pro-(finite \'etale $\Sigma$-torsion) group scheme $P^{\sigma}
$ 
over $\sO_{F,\Sigma}$ with $BP^{\sigma}$ having \'etale homotopy type $BP$, and  maps $X_{\et} \to BP$ 
correspond to  $P^{\sigma}$-torsors $f^{\sigma} \co Y^{\sigma} \to X$. 
The first obstruction map $d_1 \co E_1^{0,0} \to E_1^{1,0}$ in the spectral sequence above is the disjoint union, over inner automorphism classes of sections  $\sigma$, 
of the  Brauer--Manin obstructions
\[
 \pi_0 Y^{\sigma}(\bA_F^{\in \Sigma})/P^{\sigma}(\sO_{F,\Sigma}) \to \prod_{\ell} (\H^{2}_{\pro(\et)}( Y^{\sigma}, \mu_{\ell^{\infty}})/\H^2(G_{F,\Sigma}, \mu_{\ell^{\infty}}))^{\vee}
\]
 of the $Y^{\sigma}$ (defined as derived limits unless $\ker(P \to G_{F,\Sigma})$ is finite),  
so we have
\[
\im(\pi_0X(\bA_F^{\in \Sigma})_1 \to \pi_0X(\bA_F^{\in \Sigma})) = \bigcup_{\substack{\sigma\co G_{F,\Sigma} \to P \\ \text{a section}}}  f^{\sigma}(\pi_0 Y^{\sigma}(\bA_F^{\in \Sigma})^{\Br_{\Sigma}})
\]
(when $X$ is an algebraic space, we can drop the $\pi_0$'s). When $\sO_{F,\Sigma}=F$, combining these for  all finite extensions $P$ of $G_{F} $ will thus give Skorobogatov's \'etale Brauer--Manin obstruction \cite{skorobogatovBeyondManin}.

For  smooth  proper varieties,  the space of ad\'elic points is compact,
and by  Tychonoff's theorem the inverse limit of non-empty compact spaces is non-empty,  so  considering pro-\'etale covers in this way 
will just recover the \'etale Brauer--Manin obstruction 
in this case. 

When $F=\sO_{F,\Sigma}$, the universal case to consider would take $P= \pi_1^{\et}(X, \bar{x})$, with the spectral sequence then detecting exclusively higher homotopical information, and $\bar{Y}^{\sigma} $ being a universal cover $\tilde{X}$ of $X$. For this choice of $P$, we may therefore set
\begin{align*}
 \pi_0X(\bA_F)^{\pro(\et),\Br} :&= \im(\pi_0X(\bA_F)_1 \to \pi_0X(\bA_F))\\
&= \bigcup_{\substack{\sigma\co G_{F} \to \pi_1^{\et}(X, \bar{x}) \\ \text{a section}}}  f^{\sigma}(\pi_0 Y^{\sigma}(\bA_F)^{\Br})
\end{align*}
(again, we can drop the $\pi_0$'s when $X$ is an algebraic space).


Since $G_{F}$ has cohomological dimension $2$, the higher homotopy groups $\pi_{\ge 2}( [K]_s/[K]_{s+1} )$ never contribute to  the obstruction spaces $E_1^{s,s-1}$ for $\pi_0X(\bA_F)$ in the non-abelian spectral sequence above. 
For the universal case $P= \pi_1^{\et}(X, \bar{x})$, we have $\pi_iK= \pi^{\et}_{i+1}(\tilde{X})$, and $\pi_1[K]_2=0$ (the Hurewicz map for $\pi_2$ being an isomorphism). Thus $E_1^{s,s-1}=0$ for $s>1$, meaning   all  higher obstructions vanish and 
\[
 \pi_0X(\bA_F)^{\pro(\et),\Br}= \im(\pi_0X(\bA_F)_{\infty} \to \pi_0X(\bA_F)). 
\]

Moreover the sequence $[K]_n$ is increasingly connected, so $\Pi_{\infty}\simeq \hat{G}(X_{\et}, \bar{x})$. Together, these phenomena imply that vanishing of the pro-\'etale Brauer--Manin obstruction alone implies the existence of a compatible section of  the map $X_{\et}^{\wedge} \to (\Spec \sO_{F})_{\et}^{\wedge}$ of profinite \'etale homotopy types when $X$ is geometrically connected. 
This is not nearly as impressive as it might seem, since the construction of the pro-\'etale Brauer--Manin obstruction assumes a compatible section of $\pi_1^{\et}(X) \to G_{F}$. 

\end{example}

\begin{remark}[Relation to Harpaz--Schlank]
Our spaces $X(\bA_F)_n$ in this section are closely related to those of \cite{harpazschlank2}, which (after including Archimedean places) considers spaces $X(\bA_F)^h$ broadly of the form 
\[
X(\bA_F)\by^h_{\map_{BG_{F}}( (\Spec \bA_F)_{\et}, X_{\et} }\map_{BG_{F}}(BG_{F}, X_{\et} ),
\]
 as well as variants $ X(\bA_F)^{\Z h}$,  $X(\bA_F)^{h,n}$ and $X(\bA_F)^{\Z h,n}$. In our terms, $X(\bA_F)^{\Z h} $ corresponds to replacing $X_{\et}$ with $\bar{W}(G(X_{\et})^{\ab})$ above;  the others are given by taking Postnikov towers. 

Rather than imposing smoothness hypotheses and appealing to \cite{friedlanderfib} as we have done, \cite{harpazschlank2} constructs a $G_F$-equivariant homotopy type $\Et_{/K}(X)$, and effectively works with the homotopy quotient  $\Et_{/K}(X)/^hG_F$ in place of $X_{\et}$ above. In \cite[Theorem 11.1]{harpazschlank2}, 
the \'etale Brauer set is shown to correspond to   the set $X(\bA_F)^h$, which is a somewhat stronger statement than our final  observation in Example \ref{exhtpyEtBrauerManin}.

The main new ingredient in our constructions and comparisons is that by modelling profinite homotopy types as simplicial profinite groups and groupoids following \cite[\S \ref{weiln-profinitesn}]{weiln} and \cite[Proposition 1.19]{ddt1}, we are able to work systematically  with much more general towers than the Postnikov tower.
\end{remark}

\subsection{Alternative characterisations of the reciprocity laws}\label{alternative}

We now give a more pedestrian interpretation of the obstruction maps from \S \ref{obsthsn}, and show how this can give rise to a more explicit description of the first obstruction map in cases of interest. This first obstruction map seems to be well-known to experts, but we are not aware of a reference.

\subsubsection{Cohomological obstruction classes}

Extensions    $e\co 0 \to A \to  \Pi'' \to \Pi' \to 1$ of a group $\Pi'$  by an abelian $\Pi'$-representation $A$ are classified by
\[
 \H^2(\Pi',A),
\]
 by which we mean continuous cohomology when considering extensions of topological groups. 

Given a group homomorphism $\psi \co G \to \Pi'$, the obstruction to lifting $\psi$ to a homomorphism $\tilde{\psi} \co G \to \Pi''$ is then given by
\[
 \psi^*[e] \in \H^2(G,A).
\]
If $\psi^*[e]=0$, then  the difference between two choices for $\tilde{\psi}$ is a derivation, so the set of choices is a torsor for the group
\[
 \H^1(G,A).
\]

Taking $\Pi',\Pi''$ to be suitable quotients of the arithmetic fundamental group of a scheme $X$ over  $\sO_{F,\Sigma}$, 
the Diophantine obstruction maps on spaces of sections
\[
 \pi_0\map_{B G_{F,\Sigma}}(BG_{F,\Sigma}, B\Pi') \to \H^2(G_{F,\Sigma},A)
\]
of \S \ref{rationalsn} are all of this form.  The ad\'elic obstruction maps of \S \ref{adelicmapsn} are a slight variant coming from looking at restricted products 
\[
 \prod'_{v \in \Sigma}\pi_0\map_{B G_{F,\Sigma}}(BG_v, B\Pi')\to \prod'_{v \in \Sigma} \H^2(G_v,A).
\]

The reciprocity maps associated to an $\bA_F^{\in \Sigma}$-point in  \S \ref{recipsn} then effectively look at the difference between these obstructions, yielding an obstruction in $ \H^2_c(G_{F,\Sigma},A)$ via the  exact sequence
\[
 \prod'_{v \in \Sigma} \H^1(G_v,A) \xra{\pd} \H^2_c(G_{F,\Sigma},A) \to \H^2(G_{F,\Sigma},A)\to \prod'_{v \in \Sigma} \H^2(G_v,A).
\]

In general, this is not very easy to work with, but when the extension $e$ splits, so $\Pi'' = \Pi' \ltimes A$, the ad\'elic point defines a derivation in $\alpha \in \prod'_{v \in \Sigma} \H^1(G_v,A)$, with associated abelian obstruction $\pd(\alpha) \in \H^2_c(G_{F,\Sigma},A)$   to lifting the ad\'elic point to a rational point.

\begin{example}\label{abnilpex}
In nilpotent or unipotent settings such as Example \ref{unipex3}, the first stage in the tower is a split extension
\begin{align*}
G_{F} \ltimes  \pi_1^{\et}(\bar{X},\bar{x})^{\ab}  &\to G_{F},\\
 G_{F,\Sigma} \ltimes  (\pi_1^{\et}(\bar{X},\bar{x})\ten \Ql)^{\ab} \cong G_{F,\Sigma} \ltimes  \H^1(\bar{X}, \Ql)^* &\to G_{F,\Sigma}.
\end{align*}
 
Then an $\bA_{F}^{\in \Sigma}$-point $y$ defines a class in $ \H^1(\bA_{F}^{\in \Sigma},\H^1(\bar{X}, \Ql)^*)$ whose image in $\H^2_c(G_{F,\Sigma},\H^1(\bar{X}, \Ql)^*)$ is the first unipotent obstruction to $y$ being a rational point.  
\end{example}

\begin{example}\label{abrelmalex}
Relative Malcev completions as in Example \ref{modularex4} are a little more complicated. For  $X= \cM_{1,1}$  the stacky modular curve, take $x \in X(\sO_{F,\Sigma})$, giving rise to a $ G_{F,\Sigma}$-representation $V$ of dimension $2$ over $\Ql$. We then set
set $P_0= G_{F,\Sigma} \ltimes \SL_2(\Ql)$, and
\begin{align*}
 P_1 &:=  G_{F,\Sigma} \ltimes( \SL_2(\Z)^{\SL_2, \mal}/[\Ru]_{2}),\\
&=  G_{F,\Sigma} \ltimes( \H^1(\SL_2(\Z), O(\SL_2))^* \rtimes  \SL_2(\Ql) ).
\end{align*}
with $\Pi_i= P_i\by_{\SL_2(\Ql)}\SL_2(\Zl)$, where we are writing $O(\SL_2)$ for the ring of algebraic functions on the scheme $\SL_2$ over $\Ql$.

Now, $P_1$ is an extension of $P_0$ by $\H^1(\SL_2(\Z), O(\SL_2)\ten \Ql)^* $, so is given by a class in $\H^2(P_0, \H^1(\SL_2(\Z), O(\SL_2))^*)$, where we may regard $\SL_2(\Ql)$ as an algebraic group. Since $\SL_2$ is reductive, the Leray--Serre spectral sequence then gives
\[
 \H^2(P_0, \H^1(\SL_2(\Z), O(\SL_2))^*) \cong \H^2(G_{F,\Sigma}, (\H^1(\SL_2(\Z), O(\SL_2))^*)^{\SL_2}),
\]
which vanishes because $\H^1(\SL_2(\Z), \Ql)=0$. 

We therefore have a split extension $\Pi_1 \cong \Pi_0 \ltimes \H^1(\SL_2(\Z), O(\SL_2))^*$. (For more general relative Malcev completions, a similar conclusion will still hold by combining Leray--Serre with the splitting of the extension $\Pi_1 \to G_{F,\Sigma}$.)

Thus  an ad\'elic elliptic curve $E$ defines a class in $\H^1(\bA_{F}^{\in \Sigma},\H^1(\SL_2(\Z), O(\SL_2))^*)$, whose image in $\H^2_c(G_{F,\Sigma},\H^1(\SL_2(\Z), O(\SL_2))^*)$  is the first obstruction to $E$ being defined over $\sO_{F,\Sigma}$ with Tate module $T_{\ell}(E(\bar{F}))\ten \Q \simeq V$.
\end{example}

\subsubsection{The first obstruction for modular curves} 
We now give an explicit description of the abelian obstruction of Example \ref{abrelmalex}, seeking elliptic curves with given Tate module. 

On the modular curve $q \co Y_{\Gamma} \to \Spec \sO_{F,\Sigma}$, the Tate module of the universal elliptic curve $f \co E \to Y_{\Gamma}$ gives a lisse $\Zl$-sheaf $\bT_{\ell}$ of rank $2$, and we write $\bT_{\Ql}:=\bT_{\ell}\ten \Q$.
%
%
On pulling back to $\bar{Y}_{\Gamma}$, the sheaves $S^m\bT_{\Ql}$ correspond to the irreducible representations $V_m$ of $\SL_2$, and we 
consider the Galois representations $\H^1(\Gamma,V_m):= \oR^1q_*S^m\bT_{\Ql} $. For each $m$,  the adjunction $q^* \dashv \oR q_*$ defines a  class 
\[
\eta_m \in \Ext^1_{Y_{\Gamma},\Ql}(q^* \H^1(\Gamma,V_m), S^m\bT_{\Ql}).
\]

Now take an ad\'elic point $x \in Y_{\Gamma}(\bA_F^{\in \Sigma})$, and assume that there is a $G_{F,\Sigma}$-representation $\Lambda$ with $\det \Lambda= \Zl(1)$ and an isomorphism  $\alpha \co \Lambda\ten\Q \cong\bT_{\Ql,x}$ which is $G_v$-equivariant for all $v \in \Sigma$. A necessary condition for $x$ to lie in $Y_{\Gamma}(\sO_{F,\Sigma})$ compatibly with $\alpha$  is that the class $x^*\eta_m \in \prod_{v \in \Sigma}\Ext^1_{G_v}(\H^1(\Gamma,V_m), S^m\Lambda\ten\Q)
$ lies in the image of $ \Ext^1_{G_{F,\Sigma}}(\H^1(\Gamma,V_m), S^m\Lambda\ten\Q)$. Following the conventions of \S \ref{abPT} to replace the product with a suitable restricted product, we get an obstruction
\[
 \pd(x^*\eta_m) \in \H^2_c(G_{F,\Sigma},\H^1(\Gamma,V_m)^* \ten S^m\Lambda).
\]

Combining these gives a map
\[
 \H^1(G_{F,\Sigma}, \GL_2(\Ql))\by_{\H^1(\bA_F^{\in\Sigma}, \GL_2(\Ql))}Y_{\Gamma}(\bA_F^{\in \Sigma}) \to \prod_{m\ge 1} \H^2_c(G_{F,\Sigma}, \H^1(\Gamma, V_m)^{*}\ten S^m\Lambda),
\]
which is the first reciprocity map associated to the relative completion of $\Gamma \to \SL_2(\Ql)$ in Example \ref{modularex4}, via the isomorphism $O(\SL_2)\ten \Ql \cong \bigoplus_m V_m\ten V_m^*$. We may then use Poitou--Tate duality as in Example \ref{exhtpyBrauerManin} to rewrite the target of the map as
\[
 \prod_{m\ge 1} (\H^2_{\et}(Y_{\Gamma}, S^m\bT_{\ell} \ten q^*S^m\Lambda^*\ten \mu_{\ell^{\infty}})^{\vee}\ten\Q);
\]
adapting Example \ref{exhtpyEtBrauerManin}, this can be recovered from the Brauer--Manin obstruction of an inverse system of finite \'etale covers of $Y_{\Gamma}$, which in this case correspond to twisted level structures associated to the  $G_{F,\Sigma}$-representations $\Lambda/\ell^n$.

\begin{remark}
 An intermediate step in the construction above associates to each  elliptic curve $E$ over $F$ a class in 
\[
 \Ext^1_{G_{F,\Sigma}}(\H^1(\Gamma,V_m), S^mT_{\ell}(E(\bar{F})\ten\Q)).
\]
The corresponding construction for complex elliptic curves and mixed Hodge structures is given in   \cite[Remark 13.3]{hainHodgeDRmodular} (evaluating the section at the point $[E]$). The extension arises geometrically as the relative cohomology group $\H^1(\bar{Y}_{\Gamma},[E];S^m\bT_{\Ql})$.

\end{remark}


\subsubsection{Higher Brauer--Main obstructions via cochain algebras}\label{highercochain}

The unipotent obstructions which we have considered were formulated in terms of morphisms of simplicial pro-unipotent groups, so could be thought of as a form of Quillen homotopy type \cite{QRat}. An equivalent alternative formulation would be to look at morphisms of Sullivan homotopy types \cite{Sullivan}, which are just algebras of cochains.

Taking a Deligne--Mumford stack $X$ over $\sO_{F,\Sigma}$ and writing $\bar{X}:=X\ten \sO_{\bar{F},\Sigma}$, the cochain complex $\oR\Gamma(\bar{X},\Ql)$ carries a natural cup product, and  is in fact naturally quasi-isomorphic to a commutative differential graded algebra over $\Ql$. Equivalently this means that $\oR\Gamma(\bar{X},\Ql)$ carries the structure of a unital $\Com_{\infty}$-algebra (or strongly homotopy commutative algebra): it has a symmetric bilinear multiplication $m_2$, which is associative up to a homotopy $m_3$, and there is a hierarchy of higher homotopies $m_n$ formulated in terms of the Lie operad. In the $R=1$ case, Example \ref{exhtpy3} looks at the morphism 
\[
 \oR\Gamma(\bar{X},\Ql) \to \Ql
\]
defined by an ad\'elic point, and studies obstructions to lifting it to a $\Com_{\infty}$-morphism $\{f_n\}_{n\ge 1}$  which is equivariant for the global Galois group $G_{F,\Sigma}$, rather than just the pro-groupoid 
\[
 G_{\bA_F^{\in \Sigma}}:=  \Lim_{\substack{T\subset \Sigma\\ T \text{finite}}}( \coprod_{v\in T }G_{v}\sqcup\coprod_{v \in \Sigma -T}G_{v}/I_{v})
\]
formed from local Galois groups.

\begin{enumerate}
 \item The first reciprocity law seeks just to lift this as a morphism of complexes, fixing $\Ql \subset \oR\Gamma(\bar{X},\Ql) $,  so the first obstruction lies in 
\[
 \EExt^1_{G_{F,\Sigma},c}( \oR\Gamma(\bar{X},\Ql)/\Ql, \Ql) \cong (\H^2(X, \mu_{\ell^{\infty}})/\H^2(G_{F,\Sigma},\mu_{\ell^{\infty}}))^{\vee}\ten \Q;  
\]
this is just the rational Brauer--Manin obstruction.

\item The secondary obstruction of \S \ref{BMsn} depends on a choice $f_1\co \oR\Gamma(\bar{X},\Ql) \to \Ql $  of  $G_{F,\Sigma}$-equivariant chain map, together with a  homotopy $h_1$ of $G_{\bA_F^{\in \Sigma}}$-representations making $f_1$ compatible with our chosen ad\'elic point.  Such a lift exists whenever the rational $\ell$-torsion Brauer--Manin obstruction vanishes, and we now need to look at whether it respects the cup product. We thus ask whether the diagram
\[
 \xymatrix{
  \oR\Gamma(\bar{X},\Ql)\ten \oR\Gamma(\bar{X},\Ql) \ar[rd]_-{f_1\ten f_1} \ar[r]^-{m_2}  &   \oR\Gamma(\bar{X},\Ql) \ar[d]^{f_1}\\
&\Ql
 }
\]
commutes, up to a  homotopy $f_2$, in the derived category of $G_{F,\Sigma}$-representations,  with a further $G_{\bA_F^{\in \Sigma}}$-equivariant homotopy $h_2$ between $f_2$ and 
the homotopy $f_1\ten h_1 + h_1 \ten f_1 + (h_1d) \ten h_1 - h_1\circ m_2$ providing the known $G_{\bA_F^{\in \Sigma}}$-equivariant  commutativity of $f_1$. The resulting obstruction lies in 
\[
 \EExt^0_{G_{F,\Sigma},c}( \oR\Gamma(\bar{X}^2,\Ql),\Ql)\cong \H^3(X^2, \mu_{\ell^{\infty}})^{\vee}\ten \Q,
\]
but this restricts to the finer obstruction  described in Remark \ref{LieBMremark} when we take symmetry and the unit into account.

\item  The third obstruction is more complicated, measuring obstructions to choosing the next component $(f_3,h_3)$ of a $\Com_{\infty}$-morphism. If we choose a model $A$ of  $\oR\Gamma(\bar{X},\Ql)$ which is strictly (graded-)commutative, this means we seek a map $f_3 \co A^{\ten 3} \to A[-1]$ satisfying
\begin{align*}
 (d\circ f_3 \mp f_3\circ d)(a,b,c)  = f_2(ab,c) \pm f_2(a,bc) \mp f_1(a)f_2(b,c) \mp f_2(a,b)f_2(c),
\end{align*}
which must vanish on the unit $1 \in A$ and  on  shuffle products. The 
right-hand side and associated $G_{\bA_F^{\in \Sigma}}$-equivariant homotopy in terms of $h_2$
give rise to  an obstruction class in 
\[
 \EExt^{-1}_{G_{F,\Sigma},c}( \oR\Gamma(\bar{X}^3,\Ql),\Ql)\cong \H^4(X^3, \mu_{\ell^{\infty}})^{\vee}\ten \Q,
\]
which is closely related to Massey triple products

\item Explicit  descriptions for the higher obstructions follow from the formulae for $\Com_{\infty}$-morphisms as in \cite[\S\S 10.2.2, 13.1.13]{lodayvalletteOperads} (take the expression for $A_{\infty}$-morphisms in \cite[Proposition 10.2.12]{lodayvalletteOperads} and replace $\cA s$ with $\cL ie$ by taking invariants under shuffle permutations). These are related to higher Massey products. 
\end{enumerate}

To express Example \ref{exhtpy3} in these terms beyond the $R=1$ case, we may reformulate via \cite[Proposition 3.15 and Corollary 4.41]{htpy}  to seek $G_{F,\Sigma} \ltimes R$-equivariant morphisms
\[
 \oR\Gamma(\bar{X},O(R)) \to O(R),
\]
for a pro-reductive algebraic groupoid  $R$ over $\Ql$ and a Zariski dense Galois-equivariant homomorphism $\pi_1(\bar{X}, \cB) \to R(\Ql)$ with a Galois-equivariant set of basepoints $\cB$. The descriptions above adapt, with the sheaf $O(R)_{\Zl}\ten \mu_{\ell^{\infty}}$ (regarded as a  $\pi_1(\bar{X}, \cB) \by R$-representation via the left and right actions) replacing $\mu_{\ell^{\infty}}$. 
 
\begin{remark}
 If we wished to construct obstructions in the nilpotent, rather than unipotent setting,  we should seek Galois-equivariant morphisms $\oR\Gamma(\bar{X},\hat{\Z}) \to \hat{\Z}$ of cosimplicial commutative rings. The first obstruction is just Brauer--Manin, but the torsion in the  higher obstructions is very difficult to describe,  as discussed in Remark \ref{LieBMrmk2}. 
\end{remark}

\begin{remark}
 The description in terms of cochain algebras will readily adapt to more general cohomology theories with cup product. For instance, a motivic analogue of \S \ref{rationalsn} would be given by seeking $\Com_{\infty}$-morphisms $M(Y_{\Gamma}) \to M(F)$ of cohomological $F$-motives, assuming existence of a suitable $\Com_{\infty}$-structure enriching the cup product on motivic cohomology. The obstruction tower just depends on a filtration on the $\Com_{\infty}$-operad, whereas a Postnikov-type filtration in terms of motivic homotopy groups \cite[\S \ref{HHtannaka2-motsn}]{HHtannaka2} would require a suitable $t$-structure. This approach could also be used to construct motivic obstructions to ad\'elic points being global, along the lines of this section, but it is not obvious what the motivic analogue of Poitou--Tate duality should be.
\end{remark}

\appendix

\section{Pro-finite homotopy types for ad\`eles}\label{adelehtpy}

\begin{definition}
Write $s^{\flat}\gpd$ for the category consisting of simplicial groupoids $G$ for which
\begin{enumerate}
\item The simplicial set $\Ob G$ of $G$ is constant and finite;
 \item each $G_i(x,y)$ is finite;
\item the group $N_iG(x,x):= G_i(x,x) \cap \bigcap_{j>0} \ker \pd_j$ is trivial for all but finitely many $i$. 
\end{enumerate}
Note that the second condition is equivalent to saying that the map $G \to \cosk_nG$ to the $n$-coskeleton is an isomorphism for sufficiently large $n$. 
\end{definition}

\begin{lemma}\label{profgplemma}
The functor $U$ from $\pro(s^{\flat}\gpd)$ to simplicial profinite groupoids given by $(U\{G(\alpha)\}_{\alpha})_n := \{G(\alpha)_n\}_{\alpha}$ 
is an equivalence of categories; moreover, we may restrict to inverse systems in which all morphisms are surjective.
\end{lemma}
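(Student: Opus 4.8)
The strategy is to produce an inverse functor and verify it is quasi-inverse to $U$ on the nose (an actual inverse, not merely up to isomorphism), exploiting the finiteness packaged into $s^\flat\gpd$. Given a simplicial profinite groupoid $\mathcal{G}$ whose object set is constant and finite and whose hom-sets $\mathcal{G}_i(x,y)$ are profinite, the key observation is that each $\mathcal{G}_i$ is a cofiltered limit of finite groupoids $\mathcal{G}_i/N$ with the same object set, and that the simplicial structure maps are continuous, hence compatible with cofinal systems of open normal subgroupoids. So first I would, for each $i$, write $\mathcal{G}_i = \Lim_{N} \mathcal{G}_i/N$ over open normal subgroupoids $N$, and then show one can choose these compatibly across all $i$ simultaneously (using that a simplicial profinite groupoid with constant finite object set is determined by finitely many of its levels up to coskeleton, by the remark after the definition of $s^\flat\gpd$, so only finitely many continuity conditions need be arranged at once). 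This yields a pro-object $\{G(\alpha)\}_\alpha$ in simplicial finite groupoids; the third condition in the definition of $s^\flat\gpd$ — triviality of $N_iG(x,x)$ for $i\gg0$ — is automatic because it holds for $\mathcal{G}$ (as $\mathcal{G}\to\cosk_n\mathcal{G}$ is an iso for $n$ large, that being the content of ``profinite'' in this bounded setting) and passes to the quotients $\mathcal{G}/N$. Applying $U$ to this pro-object recovers $\mathcal{G}$ by construction of the limit.

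Next I would check functoriality and full faithfulness. A morphism $\mathcal{G}\to\mathcal{H}$ of simplicial profinite groupoids is levelwise continuous, and since each $\mathcal{H}_i$ is finite-dimensional-coskeletal with finite hom-sets, any such continuous map factors through a finite quotient at each level, compatibly; this gives a map of pro-objects, and the standard cofinality arguments for $\pro$-categories show $U$ is fully faithful. Essential surjectivity is exactly the reconstruction in the previous paragraph. For the final clause — that one may restrict to inverse systems with surjective transition maps — I would invoke the usual reindexing: given any pro-object $\{G(\alpha)\}$ in $s^\flat\gpd$, replace $G(\alpha)$ by the image $\im(G(\beta)\to G(\alpha))$ for $\beta$ ranging over a cofinal subsystem below $\alpha$; stabilisation of these images (they form a descending chain of sub-simplicial-groupoids of the finite object $G(\alpha)$, hence stabilise) produces an isomorphic pro-object with surjective transitions, and this construction is compatible with $U$.

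The main obstacle I anticipate is the \emph{simultaneous} compatibility of the finite quotients across all simplicial degrees: a priori one chooses open normal subgroupoids $N_i \trianglelefteq \mathcal{G}_i$ level by level, but to get a genuine simplicial object in finite groupoids one needs the face and degeneracy maps to descend, i.e.\ one needs a cofiltered system of ``simplicially closed'' families $\{N_i\}_i$. Here the boundedness hypothesis is essential and should be leveraged carefully: because $\mathcal{G}=\cosk_n\mathcal{G}$ for some $n$, the whole simplicial groupoid is recovered from its $n$-truncation, so only the finitely many structure maps among degrees $\le n+1$ impose conditions, and a finite intersection of open subgroupoids is still open — so the required cofiltered system exists. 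Writing this out cleanly (rather than hand-waving ``pass to a cofinal subsystem'') is the technical heart of the argument; everything else is formal manipulation in $\pro$-categories of the kind used throughout \cite{isaksen} and the cited homotopy-theory references.
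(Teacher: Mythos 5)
The central problem with your proposal is the repeated assumption that the target of $U$ consists of \emph{bounded} (coskeletal) simplicial profinite groupoids. Condition (3) in the definition of $s^{\flat}\gpd$ makes every object of $s^{\flat}\gpd$ $n$-coskeletal for some $n$, but an inverse system in $s^{\flat}\gpd$ need not be uniformly bounded, so the inverse limit — an arbitrary simplicial profinite groupoid in the image of $U$ — need not be coskeletal at any finite stage. Your argument for reconstructing a pro-object from a given simplicial profinite groupoid $\mathcal{G}$ leans explicitly on ``$\mathcal{G}=\cosk_n\mathcal{G}$ for some $n$'' to arrange the compatibility of the open subgroupoids across simplicial degrees, and the full faithfulness sketch invokes ``$\mathcal{H}_i$ is finite-dimensional-coskeletal'' for the same purpose. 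Neither hypothesis is available: indeed, the whole point of the lemma (as used later in the proof of Lemma \ref{adelemap}) is to reduce a \emph{non}-coskeletal simplicial profinite groupoid to an inverse limit of genuinely truncated ones, which would be a vacuous reduction if the target category were already bounded.

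The fix is not hard but it is a different argument: rather than assuming coskeletality, use that $\mathcal{G}\cong\Lim_n\cosk_n\mathcal{G}$ for any simplicial object, note that each $\cosk_n\mathcal{G}$ is $n$-coskeletal with each level a profinite groupoid with finite object set, and apply your compatibility argument to the finite diagram $\Delta^{\opp}_{\le n}$ to exhibit $\cosk_n\mathcal{G}$ as a cofiltered limit of objects in $s^{\flat}\gpd$; then reindex the resulting double pro-system (over $n$ and over the finite quotients) into a single cofiltered system. That is the genuine technical content here and it is precisely what the descending chain condition underlying ``Artinianness'' (the paper's entire proof is a reference to that fact and two previously proved lemmas about Artinian categories in \cite{ddt1} and \cite{weiln}) is used to organize. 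Your surjective-transitions stabilization argument, by contrast, is sound once one observes that for $G(\alpha)\in s^{\flat}\gpd$ a map out of $G(\alpha)$ is determined by its truncation below the coskeletal degree, so the chain of images stabilizes levelwise and hence globally. So: your route is valid in spirit and more explicit than the paper's citation-based proof, but as written it proves the lemma only for the bounded subcategory; you need the truncation–coskeletalization–reindexing step to treat the actual target.
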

\begin{proof}
 Since $s^{\flat}\gp$ is an Artinian category, the proofs of \cite[Proposition \ref{ddt1-cSp}]{ddt1} (which dealt with Artinian local rings rather than finite groupoids) and of \cite[ Lemma \ref{weiln-levelwiseproL}]{weiln} carry over to this generality.
\end{proof}

\begin{definition}
 Given a simplicial scheme $Y$, define $\Gamma^{\bS}(Y,-)$ to be the global sections functor from simplicial \'etale presheaves on $Y$ to simplicial sets. Write $\oR\Gamma^{\bS}_{\et}(Y,-)$ for its right-derived functor with respect to the model structure for \'etale hypersheaves. Explicitly,
\[
 \oR\Gamma^{\bS}_{\et}(Y,\sF)\simeq \ho\LLim_{Y'_{\bt}} \ho\Lim_{n \in \Delta} \Gamma(Y'_n,\sF),
\]
where $Y'_{\bt}$ runs over simplicial \'etale hypercovers of $Y$.

Given an inverse system $\sF=\{\sF_i\}_i$, set
\[
 \oR\Gamma^{\bS}_{\et}(Y,\sF):= \ho\Lim_i \oR\Gamma^{\bS}_{\et}(Y,\sF_i).
\]
 \end{definition}

\begin{lemma}\label{adelemap}
There is a canonical morphism
\[
 \oR\Gamma^{\bS}_{\et,\cts}(\Spec \bA_F^{\in \Sigma},\bar{W}G)\to\map( BG_{\bA_F^{\in \Sigma}} , \bar{W}G)
\]
in $\Ho(\bS)$,
functorial in  simplicial  profinite groupoids $G$. 
\end{lemma}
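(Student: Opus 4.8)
The goal is to produce, functorially in $G \in \pro(s^{\flat}\gpd)$, a map
$\oR\Gamma^{\bS}_{\et,\cts}(\Spec \bA_F^{\in \Sigma},\bar{W}G)\to\map( BG_{\bA_F^{\in \Sigma}} , \bar{W}G)$
in $\Ho(\bS)$. The strategy is to interpret both sides as homotopy limits/colimits over the finite subsets $T \subset \Sigma$, reduce to the local statement at each place $v$, and there compare the \'etale topological type of $\Spec F_v$ (resp.\ $\Spec \sO_{F,v}$) with the classifying space $BG_v$ (resp.\ $B(G_v/I_v)$). Concretely, by Lemma~\ref{profgplemma} we may assume $G = \{G(\alpha)\}$ is an inverse system of objects of $s^{\flat}\gpd$ with surjective transition maps, so $\bar{W}G$ is a pro-(finite simplicial set) and both constructions commute with the relevant (co)limits; this reduces us to the case $G$ finite.

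First I would recall that for a field $K$ with absolute Galois group $G_K$, the \'etale homotopy type $(\Spec K)_{\et}$ is weakly equivalent to $BG_K$, and more relevantly that for a strictly henselian-base situation (here $\Spec \sO_{F,v}$ with closed point of residue characteristic away from the coefficients, or with tame coefficients), the \'etale topological type is captured by $B(G_v/I_v)$; this is exactly the content of the parenthetical remark after the definition of $BG_{\bA_F^{\in \Sigma}}$ and is why the definition is phrased with $B(G_v/I_v)$ at unramified places. Thus for each finite $T$ there is a natural map
\[
\coprod_{v\in T} BG_v \sqcup \coprod_{v\in\Sigma-T} B(G_v/I_v) \;\longrightarrow\; (\Spec \sO_{F,v})_{\et}\text{-type over }T,
\]
inducing on mapping spaces $\oR\Gamma^{\bS}_{\et}(\textstyle\prod_{v\in T}F_v \times \prod_{v\in\Sigma-T}\sO_{F,v}, \bar{W}G) \to \prod_{v\in T}\map(BG_v,\bar{W}G)\times\prod_{v\in\Sigma-T}\map(B(G_v/I_v),\bar{W}G)$, compatibly with enlarging $T$. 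Using the description of $\oR\Gamma^{\bS}_{\et}$ as $\ho\LLim_{Y'}\ho\Lim_{\Delta}\Gamma(Y'_n,-)$ and the identification of $BG_{\bA_F^{\in \Sigma}}$ as a filtered limit over $T$ of the spaces $\coprod_{v\in T}BG_v\sqcup\coprod_{v\in\Sigma-T}B(G_v/I_v)$, passing to the colimit over $T$ on the source and using that $\map(-, \bar{W}G)$ sends this filtered colimit of spaces to a filtered colimit of mapping spaces (as in Remark~\ref{adeliccohormk}) yields the desired morphism.

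The main obstacle is matching up the \emph{continuous} cohomology/\emph{hypersheaf} conventions on $\Spec \bA_F^{\in \Sigma}$ with the prescribed pro-simplicial set $BG_{\bA_F^{\in \Sigma}}$, since $\Spec \bA_F^{\in \Sigma}$ is not literally $\Spec$ of a product and its \'etale site is subtle: a lisse sheaf is ramified at only finitely many places, which is precisely what forces the restricted-product structure but also what one must verify commutes with the hypercover colimit defining $\oR\Gamma^{\bS}_{\et}$. I would handle this by writing $\bA_F^{\in \Sigma} = \LLim_T (\prod_{v\in T}F_v\times\prod_{v\in\Sigma-T}\sO_{F,v})$ as in the Notation section, using that $\oR\Gamma^{\bS}_{\et,\cts}$ of a filtered colimit of rings is the homotopy limit of the $\oR\Gamma^{\bS}_{\et}$ of the pieces (this is where the subscript $\cts$ does its work and where the $s^{\flat}\gpd$ finiteness of $G$ is essential, guaranteeing that $\bar{W}G$ has finite homotopy groups so that the relevant $\lim^1$-type obstructions vanish or are controlled), and then invoking the standard comparison at each individual $\Spec F_v$ and $\Spec\sO_{F,v}$. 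Naturality in $G$ is automatic from the functoriality of $\bar{W}$, of $\map(-,-)$, and of $\oR\Gamma^{\bS}_{\et}$, and the pro-system case follows by applying $\ho\Lim_\alpha$ throughout.
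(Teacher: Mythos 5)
Your route --- decompose $\bA_F^{\in\Sigma}=\LLim_T R_T$ with $R_T=\prod_{v\in T}F_v\times\prod_{v\in\Sigma-T}\sO_{F,v}$, use the projections $\Spec F_v\to\Spec R_T$ (for $v\in T$) and $\Spec\sO_{F,v}\to\Spec R_T$ (for $v\in\Sigma-T$) to map $\oR\Gamma^{\bS}_{\et}(\Spec R_T,\bar{W}G)$ to the $T$-th term of the restricted product, then pass to the colimit over $T$ --- is genuinely different from the paper's, which never decomposes $\bA_F^{\in\Sigma}$ and instead works directly with hypercovers of $\Spec\bA_F^{\in\Sigma}$. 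Your use of the individual projections is a clean way to avoid ever saying anything about $\Spec$ of the infinite product, which you correctly flag as the main obstacle; that part of the proposal is sound.

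However, the argument only closes if $\oR\Gamma^{\bS}_{\et,\cts}(\Spec\bA_F^{\in\Sigma},\bar{W}G)\simeq\ho\LLim_T\oR\Gamma^{\bS}_{\et}(\Spec R_T,\bar{W}G)$, and you assert this without proof --- and with the variance reversed: you wrote ``homotopy limit'', but $\Spec$ of a filtered colimit of rings is a \emph{cofiltered limit} of affine schemes, along which cohomology is a filtered \emph{colimit}, not a limit. Unwinding what is actually needed, the statement requires (i) SGA4-style continuity of \'etale cohomology along $\Spec\bA_F^{\in\Sigma}=\Lim_T\Spec R_T$ for each finite stage $\bar{W}G_i$, and then (ii) commutation of the resulting $\LLim_T$ with the $\ho\Lim_i$ that implements the $\cts$ convention over the pro-system for $G$. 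Neither is automatic, and (ii) in particular is exactly where the restricted-product structure has to be forced. The paper's proof does this work explicitly: after reducing to $G\in s^{\flat}\gpd$ via Lemma~\ref{profgplemma}, so that $\bar{W}G$ is $m$-coskeletal, it restricts to quasi-compact $m$-truncated hypercovers of $\Spec\bA_F^{\in\Sigma}$ (using quasi-compactness of the scheme to pass to quasi-compact hypercovers, then truncation to pass to $m$-coskeletal ones), and then observes that any such hypercover is of finite type and hence defined over $(\prod_{v\in\Sigma}\sO_v)\otimes_{\Z}\Z[S^{-1}]$ for some finite $S\subset\Sigma$; consequently its restrictions $\{Y'_{\bullet,v}\}_v$ land in the restricted product of local hypercover categories, with no commutation of limits required. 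Your proposal sidesteps this finiteness observation, so there is a genuine gap at the commutation step that a substitute for the finite-type argument would need to fill.
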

\begin{proof}
Because $\Spec \bA_F^{\in \Sigma}$ is quasi-compact, the category of quasi-compact hypercovers of $\Spec \bA_F^{\in \Sigma}$ is left filtering in the category of all hypercovers, by the argument of \cite[Proposition 7.1]{fried}. Thus for all simplicial presheaves $\sF$,
\[
 \oR\Gamma^{\bS}_{\et}(\Spec \bA_F^{\in \Sigma},\sF)\simeq\holim_{ \substack{ \lra \\Y'_{\bt} \in \HR(\Spec \bA_F^{\in \Sigma})}} \ho\Lim_{n \in \Delta} \Gamma(Y'_n,\sF) \la \holim_{\substack{ \lra \\Y'_{\bt} \in qc\HR(\Spec \bA_F^{\in \Sigma})}} \ho\Lim_{n \in \Delta} \Gamma(Y'_n,\sF)
\]
is an equivalence,  where $\HR(Y)$ is the category of simplicial hypercovers $Y'_{\bt} \to Y$ and $qc\HR(Y)$  the full subcategory of simplicial hypercovers $Y'_{\bt} \to Y$  with each $Y_n'$ quasi-compact. 

Given a simplicial presheaf $\sF$ for which the map $\sF \to \cosk_m\sF$ is an isomorphism, the map
\[
\holim_{\substack{ \lra \\ Y'_{\bt} \in qc\HR(\Spec \bA_F^{\in \Sigma})}} \ho\Lim_{n \in \Delta} \Gamma(Y'_n,\sF) \la \holim_{\substack{ \lra \\Y'_{\bt} \in qc\HR^{\flat}(\Spec \bA_F)}} \ho\Lim_{n \in \Delta} \Gamma(Y'_n,\sF)
\]
is an equivalence, where $ qc\HR^{\flat}(\Spec \bA_F^{\in \Sigma})$ consists of  quasi-compact hypercovers $Y'$ which are truncated in the sense that  $Y'= \cosk_r(Y'/\bA_F^{\in \Sigma})$ for some $r$ (in fact $r=m$ suffices for the case in hand).   

Given a quasi-compact hypercover $Y'_{\bt} \to \Spec \bA_F^{\in \Sigma}$, write $Y'_{\bt,v}$ for its pullback along $\Spec F_v \to \Spec \bA_F^{\in \Sigma}$. Thus each $Y'_{i,v}$ is the spectrum of a finite product of finite field extensions of $F_v$. Because $Y'_i$ is of finite type over $\bA_F^{\in \Sigma}$, it is defined over $(\prod_{v\in \Sigma} \sO_v)\ten_{\Z}\Z[S^{-1}_i]$ for some finite set $S_i\subset \Sigma$ of primes. For $v \in S_i$, it then follows that $Y'_{i,v}$ is the spectrum of a finite product of finite unramified field extensions of $F_v$.  When the hypercover $Y'_{\bt}$ is $r$-truncated, we can set $S= \bigcup_{i \le r} S_i$, and then see that 
\[
 \{Y'_{\bt,v}\}_v \in (\prod_{v \in \Sigma -S} qc\HR^{\nr}(\Spec F_v) )\by (\prod_{v \in S} qc\HR(\Spec F_v) ) \subset \prod_v \HR(\Spec F_v),
\]
where $qc\HR^{\nr} $ consists of quasi-compact hypercovers built from  unramified field extensions.

Writing 
\[
 \prod'_v qc\HR(\Spec F_v):= \bigcup_{S \subset \Sigma \text{ finite }} (\prod_{v \in \Sigma-S} qc\HR^{\nr}(\Spec F_v) )\by (\prod_{v \in S} qc\HR(\Spec F_v) ),
\]
we then get a map
\[
 \holim_{\substack{ \lra \\Y'_{\bt} \in qc\HR^{\flat}(\Spec \bA_F^{\in \Sigma})}} \ho\Lim_{n \in \Delta} \Gamma(Y'_n,\sF) \to \holim_{\substack{ \lra \\Y'_{\bt} \in \prod'_vqc\HR^{\flat}(\Spec F_v)}} \ho\Lim_{n \in \Delta} \Gamma(Y'_n,\sF).
\]

Returning to the statement of the lemma, since both functors send filtered inverse limits to homotopy limits, Lemma \ref{profgplemma} allows us to restrict to the case where $G \in s^{\flat}\gpd$. Thus  the map $G \to\cosk_{m-1}G$  is an isomorphism for some $m$, so $\bar{W}G\cong \cosk_m\bar{W}G $ and satisfies the conditions for $\sF$ above. Then we have
\[
 \oR\Gamma^{\bS}_{\et}(\Spec \bA_F^{\in \Sigma},\bar{W}G) \to \holim_{\substack{ \lra \\Y'_{\bt} \in \prod'_vqc\HR^{\flat}(\Spec F_v)}} \prod_v \ho\Lim_{n \in \Delta} \Gamma((Y'_v)_n,\bar{W}G).
\]
Now,  we can rewrite the right-hand side as
\[
 \LLim_{S\subset \Sigma  \text{ finite }} \ho\Lim_{n \in \Delta}((\prod_{v \in \Sigma-S} \holim_{\substack{ \lra \\(Y'_v)_{\bt} \in qc\HR^{\nr}(\Spec F_v)}} \Gamma((Y'_v)_n,\bar{W}G)) \by (\prod_{v \in S}  \holim_{\substack{ \lra \\(Y'_v)_{\bt} \in qc\HR(\Spec F_v)}} \Gamma((Y'_v)_n,\bar{W}G)).
\]
Since $(\Spec F_v)_{\et}^{\wedge} \simeq BG_v$ and $(\Spec \sO_{F,v})_{\et}^{\wedge} \simeq B(G_v/I_v)$ this is weakly equivalent to
\[
 \LLim_{S\subset \Sigma  \text{ finite }} \prod_{v \in \Sigma-S}\map(B(G_v/I_v), \bar{W}G) \by \prod_{v \in S}\map(BG_v, \bar{W}G),
\]
which is just $\map( BG_{\bA_F^{\in \Sigma}} , \bar{W}G)$, as required.
\end{proof}

\begin{corollary}\label{adelemapcor}
There is a canonical morphism
\[
BG_{\bA_F^{\in \Sigma}} \to (\Spec \bA_F^{\in \Sigma})_{\et}^{\wedge}
\]
in the homotopy category of pro-simplicial sets, where $\wedge$ denotes profinite completion, and $X_{\et}$  the \'etale  topological type   as in \cite[Definition 4.4]{fried}.
\end{corollary}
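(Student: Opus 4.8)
The plan is to obtain Corollary \ref{adelemapcor} as a direct consequence of Lemma \ref{adelemap} by applying it to a suitable universal choice of simplicial profinite groupoid $G$. Concretely, $(\Spec \bA_F^{\in \Sigma})_{\et}^{\wedge}$ is, by construction of the \'etale topological type together with profinite completion, an object of $\pro(\bS)$ which is isomorphic to the inverse limit of its Postnikov tower, each stage of which is built from finite data. Using the equivalence of Lemma \ref{profgplemma} (via the loop group construction of \cite[\S V.5]{sht}), write $(\Spec \bA_F^{\in \Sigma})_{\et}^{\wedge} \simeq \bar{W}H$ for $H = \{H(\alpha)\}_\alpha$ a pro-object in $s^{\flat}\gpd$; equivalently, work with a cofinal system of finite quotients. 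Then a morphism $BG_{\bA_F^{\in \Sigma}} \to \bar{W}H$ in $\Ho(\pro(\bS))$ is the same datum as a compatible system of elements of $\pi_0\map(BG_{\bA_F^{\in \Sigma}}, \bar{W}H(\alpha))$, so it suffices to produce a natural such element for each $H(\alpha) \in s^{\flat}\gpd$.

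The key step is then to invoke Lemma \ref{adelemap}: for each $G \in s^{\flat}\gpd$ it provides a canonical map
\[
 \oR\Gamma^{\bS}_{\et,\cts}(\Spec \bA_F^{\in \Sigma},\bar{W}G)\to\map( BG_{\bA_F^{\in \Sigma}} , \bar{W}G),
\]
functorial in $G$. Now observe that $\bar{W}H$, as (a model for) the profinite \'etale homotopy type of $\Spec \bA_F^{\in \Sigma}$, tautologically receives the canonical \'etale cocycle: there is a distinguished point of $\oR\Gamma^{\bS}_{\et,\cts}(\Spec \bA_F^{\in \Sigma},\bar{W}H)$ classifying the identity, i.e. the universal $H$-torsor on $\Spec \bA_F^{\in \Sigma}$ obtained by unwinding the definition of $(\Spec \bA_F^{\in \Sigma})_{\et}^{\wedge}$ via hypercovers. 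Pushing this distinguished point through the map of Lemma \ref{adelemap} yields the required element of $\pi_0\map( BG_{\bA_F^{\in \Sigma}} , \bar{W}H)$, and functoriality in $G$ guarantees these are compatible as $H$ ranges over the defining pro-system, hence assemble to the desired morphism $BG_{\bA_F^{\in \Sigma}} \to (\Spec \bA_F^{\in \Sigma})_{\et}^{\wedge}$ in $\Ho(\pro(\bS))$.

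The main obstacle I anticipate is bookkeeping rather than conceptual: making precise that the profinite completion $(\Spec \bA_F^{\in \Sigma})_{\et}^{\wedge}$ is indeed pro-represented by $\bar{W}H$ for a pro-object $H$ in $s^{\flat}\gpd$ with surjective transition maps — so that Lemma \ref{profgplemma} applies and so that Definition \ref{mapdef1} computes $\map$ correctly as $\ho\Lim_\alpha$ — and checking that the ``identity'' cocycle in $\oR\Gamma^{\bS}_{\et,\cts}$ is natural in $\alpha$, using that $\oR\Gamma^{\bS}_{\et}$ sends filtered inverse limits of coefficients to homotopy limits (as already noted in the proof of Lemma \ref{adelemap}). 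One should also record that the composite $BG_{\bA_F^{\in \Sigma}} \to (\Spec \bA_F^{\in \Sigma})_{\et}^{\wedge} \to (\Spec \sO_{F,\Sigma})_{\et}^{\wedge} \simeq BG_{F,\Sigma}$ agrees with the natural map $BG_{\bA_F^{\in \Sigma}} \to BG_{F,\Sigma}$, which is immediate from the compatibility of hypercovers of $\Spec \bA_F^{\in \Sigma}$ with those of $\Spec \sO_{F,\Sigma}$ under the structure map, but is worth a sentence since all the applications in \S\ref{recsn} use the morphism over $BG_{F,\Sigma}$.
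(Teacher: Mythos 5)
Your proposal is correct and follows essentially the same route as the paper. The paper's own proof is the Yoneda-style shorthand: it invokes the fact (from the cited result in \emph{weiln}) that simplicial profinite groupoids model profinite homotopy types, so that giving the morphism $BG_{\bA_F^{\in\Sigma}}\to(\Spec \bA_F^{\in \Sigma})_{\et}^{\wedge}$ is equivalent to giving natural maps $\map((\Spec \bA_F^{\in \Sigma})_{\et},\bar W G)\to\map(BG_{\bA_F^{\in\Sigma}},\bar W G)$ for all simplicial profinite groupoids $G$, which Lemma \ref{adelemap} supplies (implicitly identifying $\oR\Gamma^{\bS}_{\et,\cts}$ with the mapping space out of the \'etale topological type). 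Your version unwinds exactly this: you model the target as $\bar{W}H$ for a pro-object $H$ in $s^{\flat}\gpd$ via Lemma \ref{profgplemma}, and push the identity cocycle through the natural maps of Lemma \ref{adelemap}, using that $\oR\Gamma^{\bS}_{\et}$ turns the inverse system of coefficients into a homotopy limit. That is the same argument made explicit rather than by corepresentability, and the extra paragraph on compatibility over $BG_{F,\Sigma}$, while not in the paper's proof, is a harmless and useful remark.
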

\begin{proof}
Since simplicial  profinite groupoids model profinite homotopy types by \cite[Proposition \ref{weiln-cohochar}]{weiln}, it suffices to show that we have natural morphisms
\[
 \map((\Spec \bA_F^{\in \Sigma})_{\et}, \bar{W}G) \to \map( BG_{\bA_F^{\in \Sigma}} , \bar{W}G)
\]
for  simplicial  profinite groupoids $G$, and  this is precisely the content of Lemma \ref{adelemap}.
\end{proof}

\bibliographystyle{alphanum}
\bibliography{references.bib}

\end{document}